\documentclass[reqno,11 pt]{amsart}

\usepackage{amscd}
\usepackage{a4wide}
\usepackage[english]{babel}
\usepackage[utf8]{inputenc}
\usepackage{amsmath,amsthm,amsfonts}
\usepackage{braket}
\usepackage{hyperref}
\usepackage{relsize}
\usepackage{tikz}

\theoremstyle{definition}
\newtheorem{defi}{Definition}[section]
\newtheorem{lem}[defi]{Lemma}
\newtheorem{prop}[defi]{Proposition}
\newtheorem{thm}[defi]{Theorem}
\newtheorem{cor}[defi]{Corollary}
\newtheorem{rem}[defi]{Remark}

\renewcommand{\Re}{\operatorname{Re}}
\renewcommand{\Im}{\operatorname{Im}}
\newcommand{\Arg}{\operatorname{Arg}}
\newcommand{\ran}{\operatorname{ran}}
\newcommand{\dom}{\operatorname{dom}}

\parindent 0pt

\title[]{The harmonic $H^\infty$-functional calculus \\ based on the $S$-spectrum}

\author[A. de Martino]{Antonino De Martino}
\address{(AM) Politecnico di Milano, Dipartimento di Matematica, Via E. Bonardi 9, 20133 Milano, Italy}
\email{antonino.demartino@polimi.it}

\author[S. Pinton]{Stefano Pinton}
\address{(SP) Politecnico di Milano, Dipartimento di Matematica, Via E. Bonardi 9, 20133 Milano, Italy}
\email{stefano.pinton@polimi.it}

\author[P. Schlosser]{Peter Schlosser}
\address{(PS) Politecnico di Milano, Dipartimento di Matematica, Via E. Bonardi 9, 20133 Milano, Italy}
\email{pschlosser@math.tugraz.at}

\begin{document}

\begin{abstract}
The aim of this paper is to introduce the $H^\infty$-functional calculus for harmonic functions over the quaternions. More precisely, we give meaning to $Df(T)$ for unbounded sectorial operators $T$ and polynomially growing functions of the form $Df$, where $f$ is
a slice hyperholomorphic function and $D=\partial_{q_0}+e_1\partial_{q_1}+e_2\partial_{q_2}+e_3\partial_{q_3}$ is the Cauchy-Fueter operator. The harmonic functional calculus can be viewed as a modification of the well known $S$-functional calculus $f(T)$, with a different resolvent operator. The harmonic $H^\infty$-functional calculus is defined in two steps: First, for functions with a certain decay property, one can make sense of the bounded operator $Df(T)$ directly via a Cauchy-type formula. In a second step, a regularization procedure is used to extend the functional calculus to polynomially growing functions and consequently unbounded operators $Df(T)$.
 The harmonic functional calculus is an important functional calculus of the quaternionic fine structures on the $S$-spectrum, which
  arise also in the Clifford setting and they
 encompass a variety of function spaces and the corresponding functional calculi.
 These function spaces emerge through  all possible  factorizations  of the second map of the Fueter-Sce extension theorem.
 This field represents an emerging and expanding research area that serves as a bridge
 connecting operator theory, harmonic analysis, and hypercomplex analysis.

 \end{abstract}

\maketitle

AMS Classification: 47A10, 47A60

Keywords: H-infinity functional calculus, harmonic functional calculus, Cauchy-Fueter operator, $S$-spectrum, fine structures.

\medskip

\textbf{Acknowledgements:} The research of P. Schlosser was funded by the Austrian Science Fund (FWF) under Grant No. J 4685-N and by the European Union -- NextGenerationEU.

\section{Introduction}

The complex Riesz-Dunford functional calculus \cite{RD} is based on the Cauchy integral formula and gives meaning to holomorphic functions of operators via the integral
\begin{equation}\label{Eq_Riesz_Dunford_functional_calculus}
f(A):=\frac{1}{2\pi i}\int_{\partial \Omega}(z\mathcal{I}-A)^{-1}f(z)dz,
\end{equation}
where $\Omega\subset\mathbb{C}$ is a suitable open set containing the spectrum of the complex bounded linear operator $A$.
Extending the holomorphic functional calculus to quaternionic operators presented a formidable challenge because of the various potential concepts of quaternionic holomorphicity.
Indeed, in hypercomplex analysis, unlike in complex analysis, it is important to recognize the existence of multiple notions of analyticity, including slice hyperholomorphic functions and Cauchy-Fueter regular functions.
Moreover, in the slice hyperholomorphic setting, due to the lack of commutativity, there exist two different hyperholomorphic Cauchy kernels
\begin{equation}\label{Eq_Cauchy_kernel}
S_L^{-1}(s,q):=(s-\overline{q})(s^2-2q_0s+|q|^2)^{-1}\quad\text{and}\quad S_R^{-1}(s,q):=(s^2-2q_0s+|q|^2)^{-1}(s-\overline{q}).
\end{equation}
For $f$ in the space $\mathcal{SH}_L(U)$ of left slice hyperholomorphic functions, the left Cauchy kernel $S_L^{-1}(s,q)$ gives raise to the Cauchy integral formula
\begin{equation}\label{Eq_Cauchy_formula}
f(q)=\frac{1}{2\pi}\int_{\partial(U\cap\mathbb{C}_J)}S_L^{-1}(s,q)ds_Jf(s),
\end{equation}
while for right slice hyperholomorphic functions a similar formula holds true using the right Cauchy kernel $S_R^{-1}(s,q)$, see for example \cite[Theorem 2.1.32]{CGK}. For the interpretation of the integral and the notion of slice hyperholomorphicity we refer to Definition~\ref{defi_Path_integral} and Definition~\ref{defi_Slice_hyperholomorphic_functions}. Motivated by the integral \eqref{Eq_Cauchy_formula}, the quaternionic extension of \eqref{Eq_Riesz_Dunford_functional_calculus}, called $S$-functional calculus for a quaternionic linear operator $T$, is defined by
\begin{equation}\label{Eq_S_functional_calculus_formal}
f(T):=\frac{1}{2\pi}\int_{\partial(U\cap\mathbb{C}_J)}S_L^{-1}(s,T)ds_Jf(s),
\end{equation}
where the left $S$-resolvent operator $S_L^{-1}(s,T)$ is associated with \eqref{Eq_Cauchy_kernel} and properly defined in \eqref{Eq_S_resolvent}. In this paper we establish a functional calculus similar to \eqref{Eq_S_functional_calculus_formal}, for functions in the space
\begin{equation*}
\mathcal{AH}_L(U)=\Set{Df | f\in\mathcal{SH}_L(U)},
\end{equation*}
where $D$ is the Cauchy-Fueter operator
\begin{equation}\label{Eq_Cauchy_Fueter_operator}
D=\frac{\partial}{\partial q_0}+e_1\frac{\partial}{\partial q_1}+e_2\frac{\partial}{\partial q_2}+e_3\frac{\partial}{\partial q_3}.
\end{equation}
Due to the Fueter mapping theorem \cite{Fueter}, the space $\mathcal{AH}_L(U)$ is exactly the one of axially harmonic functions. Applying the operator $D$ to the formula \eqref{Eq_Cauchy_formula}, carrying the derivatives inside the integral and using the explicit formula
\begin{equation*}
DS_L^{-1}(s,q)=-2(s^2-2q_0s+|q|^2)^{-1}=:-2Q_{c,s}^{-1}(q)
\end{equation*}
of the Cauchy-kernel, we obtain the Cauchy-type formula for axially harmonic functions
\begin{equation*}
Df(q)=\frac{-1}{\pi}\int_{\partial(U\cap\mathbb{C}_J)}Q_{c,s}^{-1}(q)ds_Jf(s).
\end{equation*}
Replacing the quaternion $q$ by a suitable quaternionic operator $T$ the above formula motivates the definition of the harmonic functional calculus
\begin{equation}\label{Eq_Q_functional_calculus_formal}
Df(T)=\frac{-1}{\pi}\int_{\partial(U\cap\mathbb{C}_J)}Q_{c,s}^{-1}(T)ds_Jf(s).
\end{equation}
For a rigorous definition of the operator $Q_{c,s}^{-1}(T)$ see \eqref{Eq_Commutative_Q_resolvent} and \eqref{Eq_Resolvent_set}. For bounded operators $T$, this functional calculus is already well established in \cite{CDPS1,Polyf2}. In this paper we consider the class of unbounded sectorial operators (also called operators of type $\omega$). The set $U$ mentioned in the boundary integration within equation \eqref{Eq_Q_functional_calculus_formal} must encompass the $S$-spectrum of $T$. So for this class of operators  the $S$-spectrum is unbounded and so we have to require additional decay properties on the function $f$ to ensure the convergence of the integral.

\medskip

Nonetheless, by employing a regularization procedure known as the $H^\infty$-functional calculus, originally introduced by A. McIntosh in the complex context \cite{McI1}, we can subsequently  extend the harmonic functional calculus to include functions $f$ that exhibit polynomial growth. To explore the $H^\infty$-functional calculus in the complex context, one can refer to the following sources: \cite{Haase,HYTONBOOK2,HYTONBOOK1,W}. Additionally, for an extension of the $H^\infty$-functional calculus in the monogenic Clifford setting and its applications, one can consider the books \cite{JBOOK, TAOBOOK}.

\medskip

We now intend to offer a comprehensive perspective on our theory by elucidating its initial motivations, detailing recent developments, and introducing a new branch known as the fine structures on the $S$-spectrum. Within this new branch, we encounter the development of the harmonic functional calculus, whose $H^\infty$-version has been elaborated in this work.

\medskip

The study of quaternionic operators is motivated by their relevance to the formulation of quantum mechanics. The foundational work by Birkhoff and von Neumann \cite{BF} showed that there are essentially two ways to formulate quantum mechanics, one using complex numbers and the other using quaternions. This result is significant since it establishes the importance of quaternionic operators as a fundamental part of quantum mechanics. The notion of $S$-spectrum was discovered in 2006, only using methods in hypercomplex analysis even though its existence was suggested by quaternionic quantum mechanics. For more details see the introduction of the book \cite{CGK}. The notion of $S$-spectrum extends also to Clifford operators, see \cite{ColomboSabadiniStruppa2011}, and recently it has been shown that the quaternionic and the Clifford settings are just particular cases of a more general framework in which the spectral theory on the $S$-spectrum can be developed, see \cite{ADVCGKS,PAMSCKPS} and the references therein. Using the notion of $S$-spectrum it was also possible to prove the quaternionic version of the spectral theorem. Precisely, unitary operators, using Herglotz's functions, were considered in \cite{ACKS16}, while quaternionic normal operators are studied in \cite{ACK}. More recently, the spectral theorem based on the $S$-spectrum was extended also to Clifford operators, see \cite{ColKim}.

\medskip

The development of the spectral theory on the $S$-spectrum has opened up several research directions in hypercomplex analysis and operator theory, without claiming completeness, we mention the quaternionic perturbation theory and invariant subspaces \cite{CereColKaSab} and new classes of fractional diffusion problems \cite{ColomboDenizPinton2020,ColomboDenizPinton2021,CGdiffusion2018,FJBOOK,ColomboPelosoPinton2019} that are based on the $H^\infty$-version of the $S$-functional calculus \cite{ACQS2016,CGdiffusion2018}. Finally, we mention that the spectral theory on the $S$-spectrum is systematically organized in the books \cite{FJBOOK,CGK,ColomboSabadiniStruppa2011}.

\medskip

In recent times a new branch of the spectral theory on the $S$-spectrum  has been developed, that is called fine structures on the $S$-spectrum. It turns out that the space $\mathcal{AH}_L(U)$ of axially harmonic functions is only one of the spaces arising from the factorization of the operators in the Fueter-Sce theorem \cite{Fueter,TaoQian1,Sce}, which connects the class of slice hyperholomorphic functions with the class of axially monogenic functions $\mathcal{AM}_L(U)$ via the powers $\Delta^{\frac{n-1}{2}}$ of the Laplace operator in dimension $n+1$, where $n$ is the number of imaginary units in the Clifford algebra $\mathbb{R}_n$. Their corresponding integral representations in turn give raise to various functional calculi, see \cite{CDPS1,Fivedim,Polyf1,Polyf2}. Note, that for odd $n$ the operator $\Delta^{\frac{n-1}{2}}$ is a pointwise differential operator, see \cite{ColSabStrupSce,Sce}, while for even values of $n$ we are dealing with fractional powers of the Laplace operator, see \cite{TaoQian1}. For more information on the work of M.~Sce see the translation of his work with in \cite{ColSabStrupSce}, and for a different description of the Fueter-Sce theorem \cite{DDG, DDG1}.

\medskip

In the special case of the quaternions $\mathbb{H}$ (which are classically identified with $\mathbb{R}_2$), the Fueter-Sce theorem, called Fueter mapping theorem in this case, holds true with $n=3$. This means, the connection between slice hyperholomorphic and axially monogenic functions is via the four dimensional Laplace operator
\begin{equation*}
\Delta=\frac{\partial^2}{\partial q_0^2}+\frac{\partial^2}{\partial q_1^2}+\frac{\partial^2}{\partial q_2^2}+\frac{\partial^2}{\partial q_3^2}.
\end{equation*}
This operator can now be factorized using the Cauchy-Fueter operator $D$ from \eqref{Eq_Cauchy_Fueter_operator} and its conjugate $\overline{D}$ in the two different ways
\begin{equation}\label{Eq_Delta_factorization}
\Delta=D\overline{D}=\overline{D}D.
\end{equation}
The space $\mathcal{AH}_L(U)$, for which we motivated the harmonic functional calculus in \eqref{Eq_Q_functional_calculus_formal}, now arises from the first of the above factorizations, namely
\begin{equation*}
\begin{CD}
\mathcal{SH}_L(U) @>D>> \mathcal{AH}_L(U) @>\overline{D}>> \mathcal{AM}_L(U).
\end{CD}
\end{equation*}
At this point we also want to mention that the polyanalytic functional calculus of the space
\begin{equation*}
\mathcal{AP}_{2,L}(U)=\Set{\overline{D}f | f\in\mathcal{SH}_L(U)},
\end{equation*}
arising from the second factorization in \eqref{Eq_Delta_factorization}, is constructed with similar strategy in \cite{CDP23,Polyf1} and the $\mathcal{F}$-functional calculus for the space $\mathcal{AM}_L(U)$ was established long ago in \cite{CSS}, while more recent investigations can be found in \cite{CDS,CDS1,CG,CS,CSF}.

\medskip

\textit{Plan of the paper}. The aim of Section~\ref{sec_Preliminary_results_on_quaternionic_function_theory} is to collect the basic facts about quaternionic function theory and in particular to fix the notations used in the paper. In Section~\ref{sec_Harmonic_functional_calculus_for_decaying_functions} we investigate the integral \eqref{Eq_Q_functional_calculus_formal} for operators $T$ of type $\omega$ and slice hyperholomorphic functions $f$ which have some order of decay at $0$ and at $\infty$. Beside the welldefinedness of this functional calculus stated in Theorem~\ref{thm_Q_functional_calculus_decaying}, we also prove its basic properties as the commutation with other operators in Proposition~\ref{prop_Commutation_with_Df} and Corollary~\ref{cor_Commutation_g_with_Df}, the product rule in Theorem~\ref{thm_Product_rule_decaying} and the equivalence to the rational functional calculus in Proposition~\ref{prop_Rational_equivalence_decaying}. In Section~\ref{sec_Harmonic_functional_calculus_for_growing_functions} we extend the theory of Section~\ref{sec_Harmonic_functional_calculus_for_decaying_functions} to functions which are allowed to grow polynomially at $0$ and at $\infty$, using certain regularizer functions. Again, the commutation relations in Proposition~\ref{prop_Commutation_with_Df_growing} and Corollary~\ref{cor_Commutation_g_with_Df_growing}, the product rule in Theorem~\ref{thm_Product_rule_growing} and the equivalence to the rational functional calculus in Proposition~\ref{prop_Rational_equivalence_growing} are also proven for this extended theory.

\section{Preliminary results on quaternionic function theory}\label{sec_Preliminary_results_on_quaternionic_function_theory}

The setting in which we will work in this paper is the one of the \textit{quaternions}
\begin{equation*}
\mathbb{H}:=\Set{s_0+s_1e_1+s_2e_2+s_3e_3 | s_0,s_1,s_2,s_3\in\mathbb{R}}
\end{equation*}
with the three imaginary units $e_1,e_2,e_3$ satisfying the relations
\begin{equation*}
e_1^2=e_2^2=e_3^2=-1\qquad\text{and}\qquad\begin{array}{l} e_1e_2=-e_2e_1=e_3, \\ e_2e_3=-e_3e_2=e_1, \\ e_3e_1=-e_1e_3=e_2. \end{array}
\end{equation*}
For every quaternion $s\in\mathbb{H}$ we set
\begin{align*}
\Re(s)&:=s_0, && (\textit{real part}) \\
\Im(s)&:=s_1e_1+s_2e_2+s_3e_3, && (\textit{imaginary part}) \\
\overline{s}&:=s_0-s_1e_1-s_2e_2-s_3e_3, && (\textit{conjugate}) \\
|s|&:=\sqrt{s_0^2+s_1^2+s_2^2+s_3^2}. && (\textit{modulus})
\end{align*}
The unit sphere of purely imaginary quaternions is defined as
\begin{equation*}
\mathbb{S}:=\Set{s\in\mathbb{H} | s_0=0\text{ and }|s|=1},
\end{equation*}
and for every $J\in\mathbb{S}$ consider the complex plane
\begin{equation*}
\mathbb{C}_J:=\Set{u+Jv | u,v\in\mathbb{R}},
\end{equation*}
which is an isomorphic copy of the complex numbers, since every $J\in\mathbb{S}$ satisfies $J^2=-1$. Moreover, for every quaternion $s\in\mathbb{H}$ we consider the corresponding $2$-sphere
\begin{equation*}
[s]:=\Set{\Re(s)+J\vert\Im(s)\vert | J\in\mathbb{S}}.
\end{equation*}
Next, we introduce the so called \textit{slice hyperholomorphic functions}, which are a quaternionic analog to the complex holomorphic functions. The sets on which those functions are defined and suitable for operator theory
are the following \textit{axially symmetric sets}.

\begin{defi}[Axially symmetric sets]
A subset $U\subseteq\mathbb{H}$ is called \textit{axially symmetric}, if $[s]\subseteq U$ for every $s\in U$.
\end{defi}

\begin{defi}[Slice hyperholomorphic functions]\label{defi_Slice_hyperholomorphic_functions}
Let $U\subseteq\mathbb{H}$ be axially symmetric and open. A function $f:U\rightarrow\mathbb{H}$ is called \textit{left} (resp. \textit{right}) \textit{slice hyperholomorphic}, if there exists two continuously differentiable functions $\alpha,\beta:\mathcal{U}:=\Set{(u,v)\in\mathbb{R}^2 | u+\mathbb{S}v\subseteq U}\rightarrow\mathbb{H}$, with
\begin{equation}\label{Eq_Holomorphic_decomposition}
f(u+Jv)=\alpha(u,v)+J\beta(u,v),\quad\Big(\text{resp.}\;f(u+Jv)=\alpha(u,v)+\beta(u,v)J\Big),
\end{equation}
for every $(u,v)\in\mathcal{U}$, $J\in\mathbb{S}$, and if the functions $\alpha,\beta$ satisfy the symmetry conditions
\begin{equation*}
\alpha(u,-v)=\alpha(u,v)\quad\text{and}\quad\beta(u,-v)=- \beta(u,v),
\end{equation*}
as well as the Cauchy-Riemann equations
\begin{equation}\label{Eq_Cauchy_Riemann_equations}
\frac{\partial}{\partial u}\alpha(u,v)=\frac{\partial}{\partial v}\beta(u,v),\quad\text{and}\quad\frac{\partial}{\partial v}\alpha(u,v)=-\frac{\partial}{\partial u}\beta(u,v).
\end{equation}
The class of left (resp. right) slice hyperholomorphic functions on $U$ is denoted by $\mathcal{SH}_L(U)$ (resp. $\mathcal{SH}_R(U)$). In the special case that the functions $\alpha$ and $\beta$ are real valued, we call the function $f$ \textit{intrinsic} and denote the space of intrinsic functions by $\mathcal{N}(U)$.
\end{defi}

Next we introduce path integrals of slice hyperholomorphic functions. Since it is sufficient to consider paths embedded in only one complex plane $\mathbb{C}_J$, the idea is to reduce it to a classical complex path integral.

\begin{defi}\label{defi_Path_integral}
Let $U\subseteq\mathbb{H}$ be axially symmetric, $J\in\mathbb{S}$. Then for any $f\in\mathcal{SH}_R(U')$, $g\in\mathcal{SH}_L(U')$ for some open set $U'\supseteq\overline{U}$, we define the integral
\begin{equation*}
\int_{\partial(U\cap\mathbb{C}_J)}f(s)ds_Jg(s):=\int_a^bf(\gamma(t))\frac{\gamma'(t)}{J}g(\gamma(t))dt,
\end{equation*}
where $\gamma:[a,b]\rightarrow\mathbb{C}_J$ is a parametrization of the boundary $\partial(U\cap\mathbb{C}_J)$.
\end{defi}

From now on $V$ always denotes a two-sided Banach space over the quaternions $\mathbb{H}$. In the following we will specify the class of operators with commuting components, which will be of interest in this paper.

\begin{defi}[Operators with commuting components]
A right-linear closed operator $T:V\rightarrow V$ with a two-sided linear domain $\dom(T)$ is said to be an \textit{operator with commuting components}, if there exist two-sided linear operators $T_i:V\rightarrow V$, $i\in\{0,\dots,3\}$, with $\dom(T_i)=\dom(T)$, such that
\begin{equation*}
T=T_0+e_1T_1+e_2T_2+e_3T_3,
\end{equation*}
and the property that for all $i,j\in\{0,\dots,3\}$
\begin{equation*}
\dom(T^2)\subseteq\dom(T_iT_j)\quad\text{and}\quad T_iT_j=T_jT_i\text{ on }\dom(T^2).
\end{equation*}
We will denote the class of closed operators with commuting components by $\mathcal{KC}(V)$.
\end{defi}

For any operator $T\in\mathcal{KC}(V)$ we additionally define the \textit{conjugate operator}
\begin{equation*}
\overline{T}:=T_0-e_1T_1-e_2T_2-e_3T_3,
\end{equation*}
with $\dom(\overline{T}):=\dom(T)$. Then for every $T\in\mathcal{KC}(V)$ there obviously holds the properties
\begin{equation*}
\dom(\overline{T}T)=\dom(T^2)\subseteq\dom(T\overline{T}),
\end{equation*}
as well as
\begin{equation*}
|T|^2:=\overline{T}T=T\overline{T}=T_0^2+T_1^2+T_2^2+T_3^2,\quad\text{on }\dom(T^2).
\end{equation*}
For any operator $T\in\mathcal{KC}(V)$ we now use the operator
\begin{equation}\label{Eq_Commutative_Q_resolvent}
Q_{c,s}(T):=s^2\mathcal{I}-2T_0s+|T|^2,\qquad\text{with }\dom Q_{c,s}(T)=\dom(T^2),
\end{equation}
to define the \textit{$S$-resolvent set}
\begin{equation}\label{Eq_Resolvent_set}
\rho_S(T):=\Set{s\in\mathbb{H} | Q_{c,s}(T)\text{ is bijective}}.
\end{equation}
The complement of this set will be called the \textit{$S$-spectrum}
\begin{equation*}
\sigma_S(T):=\mathbb{H}\setminus\rho_S(T).
\end{equation*}
It is proven in \cite[Theorem 3.1.6]{FJBOOK} that $\rho_S(T)$ is an axially symmetric open subset of $\mathbb{H}$ and that the mapping $s\mapsto Q_{c,s}^{-1}(T)$ is intrinsic on $\rho_S(T)$. Moreover, motivated by the Cauchy kernels \eqref{Eq_Cauchy_kernel} we define for every $s\in\rho_S(T)$ the \textit{left} and the \textit{right $S$-resolvent}
\begin{equation}\label{Eq_S_resolvent}
S_L^{-1}(s,T):=(s\mathcal{I}-\overline{T})Q_{c,s}^{-1}(T)\quad\text{and}\quad S_R^{-1}(s,T):=sQ_{c,s}^{-1}(T)-\sum\limits_{i=0}^3T_iQ_{c,s}^{-1}(T)\overline{e_i}.
\end{equation}
Note, that on $\dom(T)$ we are allowed to interchange $T_iQ_{c,s}^{-1}(T)=Q_{c,s}^{-1}(T)T_i$, which gives the more elegant form of the right $S$-resolvent
\begin{equation}\label{Eq_S_resolvent_on_domT}
S_R^{-1}(s,T)=Q_{c,s}^{-1}(T)(s\mathcal{I}-\overline{T}),\quad\text{on }\dom(T).
\end{equation}

\begin{rem}
Note, that the original definition of the $S$-resolvent set is
\begin{equation}\label{Eq_Resolvent_set_noncommuting}
\rho_S(T):=\Set{s\in\mathbb{H} | Q_s(T)\text{ is bijective}},
\end{equation}
using the operator
\begin{equation*}
Q_s(T):=T^2-2s_0T+|s|^2.
\end{equation*}
This definition has the advantage that it is well defined for any closed operator $T:V\rightarrow V$, not only for those with commuting components. However, it is proven in \cite[Theorem 3.3.4]{FJBOOK} that for operators $T\in\mathcal{KC}(V)$ the two definitions \eqref{Eq_Resolvent_set} and \eqref{Eq_Resolvent_set_noncommuting} coincide. Moreover, also the left (resp. right) $S$-resolvent operators \eqref{Eq_S_resolvent} are originally defined as
\begin{equation}\label{Eq_S_resolvent_noncommuting}
S_L^{-1}(s,T):=Q_s^{-1}(T)\overline{s}-TQ_s^{-1}(T)\quad\text{and}\quad S_R^{-1}(s,T):=(\overline{s}\mathcal{I}-T)Q_s^{-1}(T),
\end{equation}
but turn out to be equivalent to \eqref{Eq_S_resolvent} in the case of operators $T\in\mathcal{KC}(V)$.
\end{rem}

In order to finally define the class of operators, for which we will introduce the harmonic $H^\infty$-functional calculus in this paper, we denote for every angle $\omega\in(0,\pi)$ the open sector
\begin{equation*}
S_\omega:=\Set{s\in\mathbb{H}\setminus\{0\} | \vert\Arg(s)\vert<\omega},
\end{equation*}
where $\Arg(s)=0$ if $\Im(s)=0$, and in the case $\Im(s)\neq 0$, understood as the argument of a complex number in the complex plane $s\in\mathbb{C}_J$, with $J=\frac{\Im(s)}{|\Im(s)|}$.

\begin{defi}[Operators of type $\omega$]
Any $T\in\mathcal{KC}(V)$ is called \textit{operator of type} $\omega\in(0,\pi)$, if $\sigma_S(T)\subseteq\overline{S_\omega}$ and if for every $\theta\in(\omega,\pi)$ there exists some $C_\theta\geq 0$, such that
\begin{equation}\label{Eq_S_resolvent_estimates}
\Vert S_L^{-1}(s,T)\Vert\leq\frac{C_\theta}{|s|}\quad\text{and}\quad\Vert S_R^{-1}(s,T)\Vert\leq\frac{C_\theta}{|s|},\quad\text{for every }s\in S_\theta^c\setminus\{0\}.
\end{equation}
\end{defi}

A similar estimate as in \cite[Lemma 3.7]{CG18_2} now shows, that the estimates \eqref{Eq_S_resolvent_estimates} on the $S$-resolvent operators imply a similar estimate on the commutative $Q$-resolvent $Q_{c,s}^{-1}(T)$, which will be crucial for the existence of the integral \eqref{Eq_Q_functional_calculus_decaying} of the harmonic functional calculus.

\begin{lem}
Let $T\in\mathcal{KC}(V)$ such that $T,\overline{T}$ are of type $\omega$. Then for every $\theta\in(\omega,\pi)$ there exists some $C_\theta\geq 0$, such that
\begin{equation}\label{Eq_Q_resolvent_estimate}
\Vert Q_{c,s}^{-1}(T)\Vert\leq\frac{C_\theta}{|s|^2},\quad\text{for every }s\in S_\theta^c\setminus\{0\}.
\end{equation}
\end{lem}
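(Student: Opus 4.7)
The plan is to exhibit $Q_{c,s}^{-1}(T)$ as a symmetric combination of products of $S$-resolvent operators of $T$ and $\overline{T}$, each of which by hypothesis decays like $1/|s|$, so that the product estimate becomes $O(1/|s|^2)$. The starting point is the observation that $(\overline{T})_0 = T_0$ and $|\overline{T}|^2 = |T|^2$, whence $Q_{c,s}(\overline{T}) = Q_{c,s}(T)$. Inserting this into \eqref{Eq_S_resolvent} and \eqref{Eq_S_resolvent_on_domT} with $\overline{T}$ in place of $T$ yields the companion formulas
\begin{equation*}
S_L^{-1}(s,\overline{T}) = (s-T)\,Q_{c,s}^{-1}(T), \qquad S_R^{-1}(s,\overline{T}) = Q_{c,s}^{-1}(T)(s-T)\text{ on }\dom(T),
\end{equation*}
so the type $\omega$ hypothesis for both $T$ and $\overline{T}$ provides the bound $C_\theta/|s|$ on all four of $S_L^{-1}(s,T)$, $S_R^{-1}(s,T)$, $S_L^{-1}(s,\overline{T})$, $S_R^{-1}(s,\overline{T})$ throughout $S_\theta^c\setminus\{0\}$.

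The main step is the pair of algebraic identities
\begin{equation*}
(s-\overline{T})(s-T) = Q_{c,s}(T) - [s,T], \qquad (s-T)(s-\overline{T}) = Q_{c,s}(T) + [s,T],
\end{equation*}
where $[s,T]:=sT-Ts$. Both follow by direct expansion, using only that $T+\overline{T}=2T_0$ is two-sided linear and therefore commutes with the scalar $s$, so that $sT+s\overline{T}=Ts+\overline{T}s$. The crucial feature is the opposite sign of the commutator in the two orderings. Multiplying these identities by $Q_{c,s}^{-1}(T)$ on both sides and using $Q_{c,s}(T)Q_{c,s}^{-1}(T)=\mathcal{I}$ produces, as bounded operator identities on $V$,
\begin{align*}
S_R^{-1}(s,T)\,S_L^{-1}(s,\overline{T}) &= Q_{c,s}^{-1}(T) - Q_{c,s}^{-1}(T)\,[s,T]\,Q_{c,s}^{-1}(T), \\
S_R^{-1}(s,\overline{T})\,S_L^{-1}(s,T) &= Q_{c,s}^{-1}(T) + Q_{c,s}^{-1}(T)\,[s,T]\,Q_{c,s}^{-1}(T);
\end{align*}
a brief domain check using $Q_{c,s}^{-1}(T):V\to\dom(T^2)$ suffices to make the compositions well-defined. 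Averaging the two lines cancels the commutator contribution and produces the clean representation
\begin{equation*}
Q_{c,s}^{-1}(T) = \tfrac{1}{2}\bigl(S_R^{-1}(s,T)\,S_L^{-1}(s,\overline{T}) + S_R^{-1}(s,\overline{T})\,S_L^{-1}(s,T)\bigr).
\end{equation*}

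Finally, applying the four type $\omega$ estimates \eqref{Eq_S_resolvent_estimates} to this formula gives $\|Q_{c,s}^{-1}(T)\|\leq C_\theta^2/|s|^2$ throughout $S_\theta^c\setminus\{0\}$, which is \eqref{Eq_Q_resolvent_estimate} with the new constant $C_\theta^2$. The principal obstacle I anticipate is spotting the symmetric combination that kills the commutator term; once the averaged identity is in hand, the norm estimate is essentially immediate, and only routine domain bookkeeping remains.
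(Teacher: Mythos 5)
Your proof is correct, and it reaches the same underlying identity as the paper by a somewhat different route. The paper forms the sum and difference $S_R^{-1}(s,T)\pm S_R^{-1}(s,\overline{T})$ and $S_L^{-1}(s,T)\pm S_L^{-1}(s,\overline{T})$, computes each square $\big(\text{sum}\big)\big(\text{sum}\big)=4Q_{c,s}^{-1}(T)(s\mathcal{I}-T_0)^2Q_{c,s}^{-1}(T)$ and $\big(\text{diff}\big)\big(\text{diff}\big)=4Q_{c,s}^{-1}(T)(T_0^2-|T|^2)Q_{c,s}^{-1}(T)$, and subtracts to reconstruct $4Q_{c,s}^{-1}(T)Q_{c,s}(T)Q_{c,s}^{-1}(T)=4Q_{c,s}^{-1}(T)$. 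You instead work directly with the two cross-products $S_R^{-1}(s,T)S_L^{-1}(s,\overline{T})$ and $S_R^{-1}(s,\overline{T})S_L^{-1}(s,T)$, identify the off-diagonal contribution as a commutator $Q_{c,s}^{-1}(T)[s,T]Q_{c,s}^{-1}(T)$ with opposite signs in the two orderings, and average to cancel it. The two arguments are related by the polarization identity $(A+A')(B+B')-(A-A')(B-B')=2(AB'+A'B)$, so the final representation $Q_{c,s}^{-1}(T)=\tfrac12\big(S_R^{-1}(s,T)S_L^{-1}(s,\overline{T})+S_R^{-1}(s,\overline{T})S_L^{-1}(s,T)\big)$ is exactly the expanded form of the paper's subtraction. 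What your version buys is a cleaner intermediate identity, less bookkeeping in the final estimate, and a slightly sharper constant ($C_\theta^2$ rather than the paper's $2C_\theta^2$), because you avoid the triangle-inequality losses incurred when bounding the polarized sums. Your domain checks and the commutator identities $(s-\overline{T})(s-T)=Q_{c,s}(T)-[s,T]$, $(s-T)(s-\overline{T})=Q_{c,s}(T)+[s,T]$, which rest on $T+\overline{T}=2T_0$ being two-sided linear and hence commuting with $s$, are all correct.
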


\begin{proof}
Using the representations \eqref{Eq_S_resolvent} for $S_L^{-1}(s,T)$ and \eqref{Eq_S_resolvent_on_domT} for $S_R^{-1}(s,T)$, gives
\begin{align*}
\big(S_R^{-1}(s,T)+S_R^{-1}(s,\overline{T})\big)&\big(S_L^{-1}(s,T)+S_L^{-1}(s,\overline{T})\big) \\
&=Q_{c,s}^{-1}(T)\big((s\mathcal{I}-\overline{T})+(s\mathcal{I}-T)\big)\big((s\mathcal{I}-\overline{T})+(s\mathcal{I}-T)\big)Q_{c,s}^{-1}(T) \\
&=4Q_{c,s}^{-1}(T)(s\mathcal{I}-T_0)^2Q_{c,s}^{-1}(T),
\end{align*}
as well as
\begin{align*}
\big(S_R^{-1}(s,T)-S_R^{-1}(s,\overline{T})\big)&\big(S_L^{-1}(s,T)-S_L^{-1}(s,\overline{T})\big) \\
&=Q_{c,s}^{-1}(T)\big((s\mathcal{I}-\overline{T})-(s\mathcal{I}-T)\big)\big((s\mathcal{I}-\overline{T})-(s\mathcal{I}-T)\big)Q_{c,s}^{-1}(T) \\
&=4Q_{c,s}^{-1}(T)(T_0^2-|T|^2)Q_{c,s}^{-1}(T).
\end{align*}
Note, that it is justified by $\ran(S_L^{-1}(s,T))\subseteq\dom(T)$ to use the representation \eqref{Eq_S_resolvent_on_domT} of $S_R^{-1}(T)$. By subtracting these two equations we then conclude the identity
\begin{align*}
\big(S_R^{-1}(s,T)+&S_R^{-1}(s,\overline{T})\big)\big(S_L^{-1}(s,T)+S_L^{-1}(s,\overline{T})\big) \\
&-\big(S_R^{-1}(s,T)-S_R^{-1}(s,\overline{T})\big)\big(S_L^{-1}(s,T)-S_L^{-1}(s,\overline{T})\big) \\
&\hspace{2cm}=4Q_{c,s}^{-1}(T)(s^2-2sT_0+|T|^2)Q_{c,s}^{-1}(T)=4Q_{c,s}^{-1}(T).
\end{align*}
Since $T$ as well as $\overline{T}$ are assumed to be of type $\omega$, the estimates \eqref{Eq_S_resolvent_estimates} of the $S$-resolvents hold for $T$ as well as for $\overline{T}$ (let's say for the same constant $C_\theta$) and we get the final estimate
\begin{equation*}
\Vert Q_{c,s}^{-1}(T)\Vert\leq\frac{1}{4}\bigg(\bigg(\frac{C_\theta}{|s|}+\frac{C_\theta}{|s|}\bigg)\bigg(\frac{C_\theta}{|s|}+\frac{C_\theta}{|s|}\bigg)+\bigg(\frac{C_\theta}{|s|}+\frac{C_\theta}{|s|}\bigg)\bigg(\frac{C_\theta}{|s|}+\frac{C_\theta}{|s|}\bigg)\bigg)=\frac{2C_\theta^2}{|s|^2}. \qedhere
\end{equation*}
\end{proof}

\section{Harmonic functional calculus for decaying functions}\label{sec_Harmonic_functional_calculus_for_decaying_functions}

In this section we take the first step in establishing the harmonic functional calculus
for unbounded operators of type $\omega$, by giving a direct meaning to the integral \eqref{Eq_Q_functional_calculus_formal} for the following classes of slice hyperholomorphic functions

\begin{enumerate}
\item[i)] $\Psi_L^Q(S_\theta):=\Set{f\in\mathcal{SH}_L(S_\theta) | \exists\alpha>0,\,C_\alpha\geq 0: |f(s)|\leq\frac{C_\alpha|s|^{1+\alpha}}{1+|s|^{1+2\alpha}}\text{ for every }s\in S_\theta}$,
\item[ii)] $\Psi_R^Q(S_\theta):=\Set{f\in\mathcal{SH}_R(S_\theta) | \exists\alpha>0,\,C_\alpha\geq 0: |f(s)|\leq\frac{C_\alpha|s|^{1+\alpha}}{1+|s|^{1+2\alpha}}\text{ for every }s\in S_\theta}$,
\item[iii)] $\Psi^Q(S_\theta):=\Set{f\in\mathcal{N}(S_\theta) | \exists\alpha>0,\,C_\alpha\geq 0: |f(s)|\leq\frac{C_\alpha|s|^{1+\alpha}}{1+|s|^{1+2\alpha}}\text{ for every }s\in S_\theta}$.
\end{enumerate}

The decay at $0$ and at $\infty$ is necessary in order to make the integrals \eqref{Eq_S_functional_calculus_decaying} and \eqref{Eq_Q_functional_calculus_decaying} converge.

\begin{rem}\label{rem_Psi_spaces}
Since the $S$-resolvents $S_L^{-1}(s,T)$ and $S_R^{-1}(s,T)$ admit at $s=0$ only a $\frac{1}{|s|}$-singularity due to \eqref{Eq_S_resolvent_estimates}, instead of the $\frac{1}{|s|^2}$-singularity of $Q_{c,s}^{-1}(s,T)$ in \eqref{Eq_Q_resolvent_estimate}, the $\mathcal{O}(|s|^{1+\alpha})$ decay in the space $\Psi^Q(S_\theta)$ can be reduced to an $\mathcal{O}(|s|^\alpha)$ decay for the $S$-functional calculus \eqref{Eq_S_functional_calculus_decaying}. This means classically, the $S$-functional calculus is defined for the larger class of functions

\begin{enumerate}
\item[i)] $\Psi_L(S_\theta):=\Set{f\in\mathcal{SH}_L(S_\theta) | \exists\alpha>0,\,C_\alpha\geq 0: |f(s)|\leq\frac{C_\alpha|s|^\alpha}{1+|s|^{1+2\alpha}}\text{ for every }s\in S_\theta}$,
\item[ii)] $\Psi_R(S_\theta):=\Set{f\in\mathcal{SH}_R(S_\theta) | \exists\alpha>0,\,C_\alpha\geq 0: |f(s)|\leq\frac{C_\alpha|s|^\alpha}{1+|s|^{1+2\alpha}}\text{ for every }s\in S_\theta}$,
\item[iii)] $\Psi(S_\theta):=\Set{f\in\mathcal{N}(S_\theta) | \exists\alpha>0,\,C_\alpha\geq 0: |f(s)|\leq\frac{C_\alpha|s|^\alpha}{1+|s|^{1+2\alpha}}\text{ for every }s\in S_\theta}$.
\end{enumerate}

However, since we will use the $S$-functional typically in combination with the harmonic functional calculus \eqref{Eq_Q_functional_calculus_decaying}, we will consider functions in the smaller spaces $\Psi_L^Q(S_\theta)$, $\Psi_R^Q(S_\theta)$, $\Psi^Q(S_\theta)$ most of the time.
\end{rem}

The following well known $S$-functional calculus is motivated by the Cauchy formula \eqref{Eq_Cauchy_formula} and for example worked out in \cite{ACQS2016}.

\begin{defi}[$S$-functional calculus for decaying functions]\label{defi_S_functional_calculus_decaying}
Let $T\in\mathcal{KC}(V)$ be an operator of type $\omega$. Then for any $f\in\Psi_L(S_\theta)$ (resp. $f\in\Psi_R(S_\theta)$), $\theta\in(\omega,\pi)$, the $S$\textit{-functional calculus} is defined as the bounded, everywhere defined operator
\begin{subequations}
\begin{align}
&f(T):=\frac{1}{2\pi}\int_{\partial(S_\varphi\cap\mathbb{C}_J)}S_L^{-1}(s,T)ds_Jf(s), \label{Eq_S_functional_calculus_decaying} \\
&\bigg(\text{resp.}\;f(T):=\frac{1}{2\pi}\int_{\partial(S_\varphi\cap\mathbb{C}_J)}f(s)ds_JS_R^{-1}(s,T)\bigg), \label{Eq_S_functional_calculus_decaying_right}
\end{align}
\end{subequations}
where $\varphi\in(\omega,\theta)$ and $J\in\mathbb{S}$ are arbitrary and the integral is independent of those parameters.
\end{defi}

The novelty of this paper is now the following harmonic functional calculus, which replaces the left and right $S$-resolvent operators in \eqref{Eq_S_functional_calculus_decaying} by the commutative pseudo resolvent $Q_{c,s}^{-1}(T)$. This leads to a functional calculus for the axially harmonic functions $\mathcal{AH}(U)$, i.e. for $Df(T)$ for any slice hyperholomorphic function $f\in\mathcal{SH}(U)$. Note, that simply plugging $Df$ into the $S$-functional calculus \eqref{Eq_S_functional_calculus_decaying} is not allowed since in general $Df$ is no longer a slice hyperholomorphic function.

\begin{defi}[Harmonic functional calculus for decaying functions]\label{defi_Q_functional_calculus_decaying}
Let $T\in\mathcal{KC}(V)$ with $T,\overline{T}$ being operators of type $\omega$. Then for every $f\in\Psi_L^Q(S_\theta)$ (resp. $f\in\Psi_R^Q(S_\theta)$), $\theta\in(\omega,\pi)$, the \textit{harmonic functional calculus} is defined as

\medskip

\begin{minipage}{0.3\textwidth}
\begin{center}
\begin{tikzpicture}
\fill[black!15] (1.56,1.56)--(0,0)--(1.56,-1.56) arc (-45:45:2.2);
\fill[black!30] (2,0.93)--(0,0)--(2,-0.93) arc (-25:25:2.2);
\draw (2,0.93)--(0,0)--(2,-0.93);
\draw (1.56,1.56)--(0,0)--(1.56,-1.56);
\draw (1.6,0) node[anchor=north] {$\sigma_S(T)$};
\draw (0.9,0) arc (0:25:0.9) (0.7,-0.05) node[anchor=south] {$\omega$};
\draw (1.3,0) arc (0:35:1.3) (1.05,-0.03) node[anchor=south] {$\varphi$};
\draw (1.7,0) arc (0:45:1.7) (1.45,0.05) node[anchor=south] {$\theta$};
\draw[thick] (1.8,-1.26)--(0,0)--(1.8,1.26);
\draw[thick,->] (1.8,1.26)--(1.47,1.03);
\draw[thick,->] (0,0)--(1.47,-1.03);
\draw[->] (-0.5,0)--(2.6,0);
\draw[->] (0,-1.5)--(0,1.5) node[anchor=north east] {\large{$\mathbb{C}_J$}};
\draw (1.7,-1)--(2.2,-1.2) node[anchor=west] {$\dom(f)$};
\end{tikzpicture}
\end{center}
\end{minipage}
\begin{minipage}{0.69\textwidth}
\begin{subequations}
\begin{align}
&Df(T):=\frac{-1}{\pi}\int_{\partial(S_\varphi\cap\mathbb{C}_J)}Q_{c,s}^{-1}(T)ds_Jf(s), \label{Eq_Q_functional_calculus_decaying} \\
&\bigg(\text{resp.}\;(fD)(T):=\frac{-1}{\pi}\int_{\partial(S_\varphi\cap\mathbb{C}_J)}f(s)ds_JQ_{c,s}^{-1}(T)\bigg), \label{Eq_Q_functional_calculus_decaying_right}
\end{align}
\end{subequations}
\hfill where $\varphi\in(\omega,\theta)$ and $J\in\mathbb{S}$ are arbitrary.
\end{minipage}
\end{defi}

The following theorem shows that the Definition~\ref{defi_Q_functional_calculus_decaying} is well posed.

\begin{thm}\label{thm_Q_functional_calculus_decaying}
Let $T\in\mathcal{KC}(V)$ such that $T,\overline{T}$ are of type $\omega$. Then for every $f\in\Psi_L^Q(S_\theta)$ (resp. $f\in\Psi_R^Q(S_\theta)$), $\theta\in(\omega,\pi)$, the integrals \eqref{Eq_Q_functional_calculus_decaying} are absolute convergent and do neither depend on the angle $\varphi\in(\omega,\theta)$ nor on the choice of the imaginary unit $J\in\mathbb{S}$. Moreover, if two functions $f,g\in\Psi_L^Q(S_\theta)$ (resp. $f,g\in\Psi_R^Q(S_\theta)$) satisfy $Df=Dg$ (resp. $fD=gD$), then also the functional calculi $Df(T)=Dg(T)$ (resp. $(fD)(T)=(gD)(T)$) coincide.
\end{thm}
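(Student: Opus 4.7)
The plan is to break the theorem into four claims: (a) absolute convergence of the integrals \eqref{Eq_Q_functional_calculus_decaying}, (b) independence of $\varphi\in(\omega,\theta)$, (c) independence of $J\in\mathbb{S}$, and (d) the consistency $Df=Dg\Rightarrow Df(T)=Dg(T)$. For (a), I parametrize $\partial(S_\varphi\cap\mathbb{C}_J)$ as the two rays $s=re^{\pm J\varphi}$, $r\in(0,\infty)$, and combine $\Vert Q_{c,s}^{-1}(T)\Vert\leq C_\theta/|s|^2$ from \eqref{Eq_Q_resolvent_estimate} with the decay $|f(s)|\leq C_\alpha|s|^{1+\alpha}/(1+|s|^{1+2\alpha})$ built into $\Psi_L^Q(S_\theta)$. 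The integrand norm is majorized by $Cr^{\alpha-1}/(1+r^{1+2\alpha})$, which is integrable on $(0,\infty)$ since $\alpha>0$ gives exponent $\alpha-1>-1$ near $0$ and effective exponent $-\alpha-2<-1$ at infinity. For (b), $s\mapsto Q_{c,s}^{-1}(T)$ is intrinsic on $\rho_S(T)$ and $f$ is left slice hyperholomorphic on $S_\theta$, so the integrand is $\mathbb{C}_J$-holomorphic on $(S_\theta\setminus\overline{S_\omega})\cap\mathbb{C}_J$. Cauchy's theorem on a truncated annular sector $\{\varphi_1<|\arg s|<\varphi_2,\,\epsilon<|s|<R\}$ between any two angles $\varphi_1<\varphi_2$ in $(\omega,\theta)$, together with the same estimate used in (a) to kill the circular arcs as $\epsilon\to 0$ and $R\to\infty$, yields equality of the two ray integrals.

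For (c), I would follow the classical $J$-independence argument for the $S$-functional calculus (see e.g.\ \cite{ACQS2016}), which relies only on the fact that $s\mapsto Q_{c,s}^{-1}(T)$ is intrinsic in $s$---a property shared with $S_L^{-1}(s,T)$. The argument proceeds via the structure/representation formula for slice hyperholomorphic functions together with a four-dimensional Stokes computation on an axially symmetric region containing both slices $\mathbb{C}_J$ and $\mathbb{C}_{J'}$, and it transfers verbatim to the commutative pseudo-resolvent $Q_{c,s}^{-1}(T)$.

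For (d), set $h:=f-g\in\Psi_L^Q(S_\theta)$ with $Dh=0$, and write $h(u+Jv)=\alpha(u,v)+J\beta(u,v)$. A direct computation using the spherical decomposition $q=q_0+J|\Im(q)|$ with $v=|\Im(q)|$, $J=\Im(q)/|\Im(q)|$, together with the identity $\sum_{i=1}^{3}e_i\,\Im(q)\,q_i=-|\Im(q)|^2$, collapses the cross terms and gives
\[
Dh(u+Jv)=(\partial_u\alpha-\partial_v\beta)+J(\partial_u\beta+\partial_v\alpha)-\frac{2\beta(u,v)}{v}.
\]
The Cauchy-Riemann equations \eqref{Eq_Cauchy_Riemann_equations} annihilate the first two brackets, so $Dh=-2\beta/v$; hence $Dh=0$ forces $\beta\equiv 0$, and then \eqref{Eq_Cauchy_Riemann_equations} forces $\alpha$ to be constant. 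The decay condition in $\Psi_L^Q(S_\theta)$ gives $|h(s)|\to 0$ as $|s|\to 0$ in $S_\theta$, so this constant vanishes and $h\equiv 0$. Hence $f=g$ and the two functional calculi trivially coincide. The main obstacle is part (c): while (a), (b), (d) reduce respectively to standard resolvent estimates, Cauchy's theorem on a single slice, and a direct kernel computation, part (c) requires the full slice-hyperholomorphic machinery of the structure formula and four-dimensional integration.
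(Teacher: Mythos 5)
Your parts (a), (b), and (d) are correct and follow essentially the same route as the paper. In particular, your computation in the spherical decomposition, giving $Dh(u+Jv)=(\partial_u\alpha-\partial_v\beta)+J(\partial_u\beta+\partial_v\alpha)-\tfrac{2\beta}{v}$, is accurate (the cross-term identity $\sum_{i=1}^3 e_i\,\Im(q)\,q_i = -|\Im(q)|^2$ checks out), and combining it with the Cauchy--Riemann equations to get $\beta\equiv 0$, $\alpha$ constant, then invoking the decay in $\Psi_L^Q(S_\theta)$ to kill the constant, is precisely the paper's argument (the paper routes it via the Vekua system $\partial_u\alpha-\partial_v\beta=\tfrac{2}{v}\beta$, $\partial_v\alpha+\partial_u\beta=0$, but it is the same computation). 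One nitpick: your formula is only meaningful for $\Im(q)\neq 0$, so you should explicitly invoke continuity to extend the constancy of $f-g$ to the real axis before concluding, as the paper does.

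Part (c) is a genuine gap. You defer to the classical $S$-functional calculus argument (citing a structure formula plus a four-dimensional Stokes computation) and assert it transfers verbatim. The paper does not do this, and the transfer is not verbatim. The paper builds a bespoke two-slice argument: it expresses $f$ on $\mathrm{ran}(\gamma_{2,\varepsilon})\subset\mathbb{C}_J$ by a Cauchy integral in the $\mathbb{C}_I$-slice over $\gamma_{3,\varepsilon}\ominus\sigma_\varepsilon\ominus\gamma_{1,\varepsilon}$, expresses $Q_{c,p}^{-1}(T)$ on $\mathrm{ran}(\gamma_{3,\varepsilon})$ by a $\mathbb{C}_J$-slice Cauchy integral over $\gamma_{2,\varepsilon}\oplus\kappa_\varepsilon\oplus\tau_{\varepsilon/2}$, Fubinizes, and then kills the remainder double integrals as $\varepsilon\to 0^+$ with estimates that lean specifically on the $\mathcal{O}(|s|^{-2})$ bound for $Q_{c,s}^{-1}(T)$ and the stronger $\mathcal{O}(|s|^{1+\alpha})$ decay built into $\Psi_L^Q(S_\theta)$ (this is exactly why the paper uses $\Psi_L^Q$ rather than the usual $\Psi_L$). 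None of this is a mere renaming of the $S$-resolvent argument: the residue bookkeeping and the singularity orders are different, and the smallness of the $\sigma_\varepsilon$-/$\tau_{\varepsilon/2}$-contributions has to be re-established from scratch. You have correctly identified this as "the main obstacle," but identifying the obstacle is not the same as clearing it; as written, this part of your proposal is incomplete.
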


\begin{proof}
We will only consider $f\in\Psi_L^Q(S_\theta)$, i.e. the first integral in \eqref{Eq_Q_functional_calculus_decaying}, the calculations for the second integral are the same.

\medskip

For the \textit{absolute convergence of  integral} \eqref{Eq_Q_functional_calculus_decaying}, we use the path $\gamma(t):=\begin{cases} -te^{J\varphi}, & t<0, \\ te^{-J\varphi}, & t>0, \end{cases}$ along the boundary of $S_\varphi\cap\mathbb{C}_J$. Then the estimate \eqref{Eq_Q_resolvent_estimate} of the operator $Q_{c,s}^{-1}(T)$ and the fact that $f\in\Psi_L^Q(S_\theta)$, give the absolute convergence of integral
\begin{equation}\label{Eq_Q_functional_calculus_decaying_15}
\int_{\mathbb{R}\setminus\{0\}}\big\Vert Q_{c,\gamma(t)}^{-1}(T)\big\Vert\,|\gamma'(t)|\,|f(\gamma(t))|dt\leq C_\varphi C_\alpha\int_{\mathbb{R}\setminus\{0\}}\frac{|t|^{\alpha-1}}{1+|t|^{1+2\alpha}}dt<\infty.
\end{equation}
For the \textit{independence of the angle} $\varphi$, we consider two angles $\varphi_1<\varphi_2\in(\omega,\theta)$ and for every $0<\varepsilon<R$ we consider the curves

\medskip

\begin{minipage}{0.29\textwidth}
\begin{center}
\begin{tikzpicture}
\fill[black!15] (1.25,2.17)--(0,0)--(1.25,-2.17) arc (-60:60:2.5);
\draw (1.25,2.17)--(0,0)--(1.25,-2.17);
\fill[black!30] (2.27,1.06)--(0,0)--(2.27,-1.06) arc (-25:25:2.5);
\draw (2.27,1.06)--(0,0)--(2.27,-1.06);
\draw[->] (-0.3,0)--(2.9,0);
\draw[->] (0,-2)--(0,2.3) node[anchor=north east] {\large{$\mathbb{C}_J$}};
\draw (1,0) arc (0:35:1) (0.75,-0.09) node[anchor=south] {\small{$\varphi_1$}};
\draw (1.4,0) arc (0:50:1.4) (1.15,0.06) node[anchor=south] {\small{$\varphi_2$}};
\draw[thick] (0.45,0.54)--(1.45,1.73) arc (50:35:2.26)--(0.58,0.41);
\draw[thick] (0.45,-0.54)--(1.45,-1.73) arc (-50:-35:2.26)--(0.58,-0.41);
\draw[thick,->] (0.58,0.41)--(0.45,0.54);
\draw (0.7,0.55) node[anchor=north east] {\small{$\sigma_\varepsilon$}};
\draw[thick,->] (0.45,-0.54)--(0.58,-0.41);
\draw (0.65,-0.55) node[anchor=south east] {\small{$\sigma_\varepsilon$}};
\draw[thick,->] (1.85,1.3)--(1.28,0.89);
\draw (1.28,0.78) node[anchor=west] {\small{$\gamma_{1,\varepsilon,R}$}};
\draw[thick,->] (0.58,-0.41)--(1.28,-0.89);
\draw (1.2,-0.7) node[anchor=west] {\small{$\gamma_{1,\varepsilon,R}$}};
\draw[thick,->] (1.45,1.73)--(1,1.2);
\draw (1.1,1.3) node[anchor=east] {\small{$\gamma_{2,\varepsilon,R}$}};
\draw[thick,->] (0.45,-0.54)--(1,-1.2);
\draw (1,-1.3) node[anchor=east] {\small{$\gamma_{2,\varepsilon,R}$}};
\draw[thick,->] (1.63,1.57)--(1.6,1.6);
\draw (1.6,1.6) node[anchor=west] {\small{$\sigma_R$}};
\draw[thick,->] (1.71,-1.48)--(1.73,-1.45);
\draw (1.65,-1.6) node[anchor=west] {\small{$\sigma_R$}};
\end{tikzpicture}
\end{center}
\end{minipage}
\begin{minipage}{0.7\textwidth}
\begin{align*}
\sigma_\varepsilon(\varphi)&:=\varepsilon e^{J\varphi},\hspace{1.03cm}\varphi\in(-\varphi_2,-\varphi_1)\cup(\varphi_1,\varphi_2), \\
\sigma_R(\varphi)&:=Re^{J\varphi},\hspace{0.9cm}\varphi\in(-\varphi_2,-\varphi_1)\cup(\varphi_1,\varphi_2), \\
\gamma_{1,\varepsilon,R}(t)&:=\begin{cases} -te^{J\varphi_1}, & t\in(-R,-\varepsilon), \\ te^{-J\varphi_1}, & t\in(\varepsilon,R), \end{cases} \\
\gamma_{2,\varepsilon,R}(t)&:=\begin{cases} -te^{J\varphi_2}, & t\in(-R,-\varepsilon), \\ te^{-J\varphi_2}, & t\in(\varepsilon,R). \end{cases}
\end{align*}
\end{minipage}

\medskip

Then the Cauchy integral theorem gives
\begin{equation}\label{Eq_Q_functional_calculus_decaying_1}
\int_{\gamma_{1,\varepsilon,R}}Q_{c,s}^{-1}(T)ds_Jf(s)=\int_{\sigma_R\oplus\gamma_{2,\varepsilon,R}\ominus\sigma_\varepsilon}Q_{c,s}^{-1}(T)ds_Jf(s).
\end{equation}
In the limit $\varepsilon\rightarrow 0^+$, the integral along $\sigma_\varepsilon$ vanishes, because of
\begin{align}
\bigg|\int_{\sigma_\varepsilon}Q_{c,s}^{-1}(T)ds_Jf(s)\bigg|&=\bigg|\int_{\varphi_1<|\varphi|<\varphi_2}Q_{c,\varepsilon e^{J\varphi}}^{-1}(T)\varepsilon e^{J\varphi}f(\varepsilon e^{J\varphi})d\varphi\bigg| \notag \\
&\leq C_{\varphi_1}C_\alpha\int_{\varphi_1<|\varphi|<\varphi_2}\frac{1}{\varepsilon^2}\,\varepsilon\,\frac{\varepsilon^{1+\alpha}}{1+\varepsilon^{1+2\alpha}}d\varphi \notag \\
&=2C_{\varphi_1}C_\alpha(\varphi_2-\varphi_1)\frac{\varepsilon^\alpha}{1+\varepsilon^{1+2\alpha}}\overset{\varepsilon\rightarrow 0^+}{\longrightarrow}0. \label{Eq_Q_functional_calculus_decaying_2}
\end{align}
In the same way also the integral along $\sigma_{R}$ vanishes in the limit $R\rightarrow\infty$,
\begin{equation}\label{Eq_Q_functional_calculus_decaying_3}
\bigg|\int_{\sigma_R}Q_{c,s}^{-1}(T)ds_Jf(s)\bigg|\leq 2C_{\varphi_1}C_\alpha(\varphi_2-\varphi_1)\frac{R^\alpha}{1+R^{1+2\alpha}}\overset{R\rightarrow\infty}{\longrightarrow}0.
\end{equation}
Performing now the limits $\varepsilon\rightarrow 0^+$ and $R\rightarrow\infty$ in \eqref{Eq_Q_functional_calculus_decaying_1} and using that the integrals \eqref{Eq_Q_functional_calculus_decaying_2} and \eqref{Eq_Q_functional_calculus_decaying_3} vanish, we obtain the independency of the angle
\begin{equation*}
\int_{\partial(S_{\varphi_1}\cap\mathbb{C}_J)}Q_{c,s}^{-1}(T)ds_Jf(s)=\int_{\partial(S_{\varphi_2}\cap\mathbb{C}_J)}Q_{s,c}^{-1}(T)ds_Jf(s).
\end{equation*}
For the \textit{independence on the imaginary unit} $J\in\mathbb{S}$, we consider two imaginary units $J,I\in\mathbb{S}$. For any angles $\varphi_1<\varphi_2<\varphi_3\in(\omega,\theta)$ we fix now $\varepsilon>0$ and define the paths

\medskip

\begin{minipage}{0.29\textwidth}
\begin{center}
\begin{tikzpicture}
\fill[black!15] (0.65,2.41)--(0,0)--(0.65,-2.41) arc (-75:75:2.5);
\draw (0.65,2.41)--(0,0)--(0.65,-2.41);
\fill[black!30] (2.27,1.06)--(0,0)--(2.27,-1.06) arc (-25:25:2.5);
\draw (2.27,1.06)--(0,0)--(2.27,-1.06);
\draw[->] (-1,0)--(2.9,0);
\draw[->] (0,-2.4)--(0,2.5) node[anchor=north east] {\large{$\mathbb{C}_I$}};
\draw[thick] (1.06,2.27)--(0.3,0.64) arc (65:35:0.71)--(2.05,1.43);
\draw[thick] (1.06,-2.27)--(0.3,-0.64) arc (-65:-35:0.71)--(2.05,-1.43);
\draw[thick,->] (0.47,0.53)--(0.46,0.54);
\draw (0.6,0.6) node[anchor=north east] {\small{$\sigma_\varepsilon$}};
\draw[thick,->] (0.45,-0.55)--(0.46,-0.54);
\draw (0.65,-0.6) node[anchor=south east] {\small{$\sigma_\varepsilon$}};
\draw[thick,->] (1.85,1.3)--(1.28,0.89);
\draw (1.28,0.78) node[anchor=west] {\small{$\gamma_{1,\varepsilon}$}};
\draw[thick,->] (0.58,-0.41)--(1.28,-0.89);
\draw (1.2,-0.7) node[anchor=west] {\small{$\gamma_{1,\varepsilon}$}};
\draw[thick,->] (0.96,2.05)--(0.66,1.41);
\draw (0.69,1.41) node[anchor=east] {\small{$\gamma_{3,\varepsilon}$}};
\draw[thick,->] (0.3,-0.64)--(0.66,-1.41);
\draw (0.66,-1.41) node[anchor=east] {\small{$\gamma_{3,\varepsilon}$}};
\draw[dashed] (1.61,-1.92)--(0.46,-0.54);
\draw[dashed] (0.46,0.54)--(1.61,1.92) node[anchor=south] {\small{$\gamma_{2,\varepsilon}$}};
\fill[black] (1.09,1.29) circle (0.07cm) node[anchor=south] {$[s]$};
\fill[black] (1.09,-1.29) circle (0.07cm) node[anchor=north] {$[s]$};
\end{tikzpicture}
\end{center}
\end{minipage}
\begin{minipage}{0.7\textwidth}
\begin{align}
\gamma_{1,\varepsilon}(t)&:=\begin{cases} -te^{I\varphi_1}, & t<-\varepsilon, \\ te^{-I\varphi_1}, & t>\varepsilon, \end{cases} \notag \\
\gamma_{2,\varepsilon}(t)&:=\begin{cases} -te^{J\varphi_2}, & t<-\varepsilon, \\ te^{-J\varphi_2}, & t>\varepsilon, \end{cases} \label{Eq_Q_functional_calculus_decaying_9} \\
\gamma_{3,\varepsilon}(t)&:=\begin{cases} -te^{I\varphi_3}, & t<-\varepsilon, \\ te^{-I\varphi_3}, & t>\varepsilon, \end{cases} \notag \\
\sigma_\varepsilon(\varphi)&:=\varepsilon e^{I\varphi},\quad \varphi\in(-\varphi_3,-\varphi_1)\cup(\varphi_1,\varphi_3). \notag
\end{align}
Note, that $\gamma_{1,\varepsilon}$, $\gamma_{3,\varepsilon}$, $\sigma_\varepsilon$ are curves in $\mathbb{C}_I$, while $\gamma_{2,\varepsilon}$ is in $\mathbb{C}_J$.
\end{minipage}

\medskip

The Cauchy formula \eqref{Eq_Cauchy_formula} then gives the representation
\begin{equation}\label{Eq_Q_functional_calculus_decaying_4}
f(s)=\frac{1}{2\pi}\int_{\gamma_{3,\varepsilon}\ominus\sigma_\varepsilon\ominus\gamma_{1,\varepsilon}}S_L^{-1}(p,s)dp_If(p),\qquad s\in\ran(\gamma_{2,\varepsilon}),
\end{equation}
where the integral along $\infty$, which closes the path on the right, vanishes due to the asymptotics
\begin{equation}\label{Eq_Q_functional_calculus_decaying_8}
|S_L^{-1}(p,s)f(p)|=\frac{|p-\overline{s}|}{|p^2-2s_0p+|s|^2|}\frac{C_\alpha|p|^{1+\alpha}}{1+|p|^{1+2\alpha}}=\mathcal{O}(|p|^{-1-\alpha}),\quad\text{as }p\rightarrow\infty.
\end{equation}
Next we consider the curves

\medskip

\begin{minipage}{0.29\textwidth}
\begin{center}
\begin{tikzpicture}
\fill[black!30] (1.99,0.93)--(0,0)--(1.99,-0.93) arc (-25:25:2.2);
\draw (1.99,0.93)--(0,0)--(1.99,-0.93);
\draw[->] (-1,0)--(2.5,0);
\draw[->] (0,-2)--(0,2) node[anchor=north east] {\large{$\mathbb{C}_J$}};
\fill[black] (0.66,1.4) circle (0.07cm) node[anchor=east] {$[p]$};
\fill[black] (0.66,-1.4) circle (0.07cm) node[anchor=east] {$[p]$};
\draw[dashed] (1.8,1.26)--(0.82,0.57) arc (35:65:1)--(0.93,1.99) node[anchor=west] {\small{$\gamma_{3,\varepsilon}$}};
\draw[dashed] (1.8,-1.26)--(0.82,-0.57) arc (-35:-65:1)--(0.93,-1.99);
\draw (1.8,1.26) node[anchor=west] {\small{$\gamma_{1,\varepsilon}$}};
\draw[thick] (1.41,1.69)--(0.32,0.38) arc (50:310:0.5)--(1.41,-1.69);
\draw[thick,->] (0.64,-0.77)--(1.09,-1.3);
\draw (1.09,-1.3) node[anchor=west] {\small{$\gamma_{2,\varepsilon}$}};
\draw[thick,->] (1.45,1.73)--(0.91,1.08);
\draw (0.91,1.1) node[anchor=west] {\small{$\gamma_{2,\varepsilon}$}};
\draw[thick,->] (0.51,0.61)--(0.45,0.54);
\draw (0.6,0.75) node[anchor=east] {\small{$\kappa_\varepsilon$}};
\draw[thick,->] (0.45,-0.54)--(0.51,-0.61);
\draw (0.6,-0.7) node[anchor=east] {\small{$\kappa_\varepsilon$}};
\draw[thick,->] (-0.45,0.19)--(-0.46,0.17);
\draw (-0.41,0.17) node[anchor=east] {\small{$\tau_{\frac{\varepsilon}{2}}$}};
\end{tikzpicture}
\end{center}
\end{minipage}
\begin{minipage}{0.7\textwidth}
\begin{align}
\tau_{\frac{\varepsilon}{2}}(\varphi)&:=\frac{\varepsilon}{2}e^{J\varphi},\qquad \varphi\in(\varphi_2,2\pi-\varphi_2), \notag \\
\kappa_\varepsilon(t)&:=\begin{cases} -te^{J\varphi_2}, & t\in(-\varepsilon,-\frac{\varepsilon}{2}), \\ te^{-J\varphi_2}, & t\in(\frac{\varepsilon}{2},\varepsilon). \end{cases} \label{Eq_Q_functional_calculus_decaying_10}
\end{align}
In this setting the Cauchy formula \eqref{Eq_Cauchy_formula} gives
\begin{align}
Q_{c,p}^{-1}(T)&=\frac{-1}{2\pi}\int_{\gamma_{2,\varepsilon}\oplus\kappa_\varepsilon\oplus\tau_{\frac{\varepsilon}{2}}}Q_{c,s}^{-1}(T)ds_JS_R^{-1}(s,p) \notag \\
&\hspace{-1cm}=\frac{1}{2\pi}\int_{\gamma_{2,\varepsilon}\oplus\kappa_\varepsilon\oplus\tau_{\frac{\varepsilon}{2}}}Q_{c,s}^{-1}(T)ds_JS_L^{-1}(p,s),\quad p\in\ran(\gamma_{3,\varepsilon}), \label{Eq_Q_functional_calculus_decaying_6}
\end{align}
\end{minipage}

\medskip

where the negative sign in the first line comes from the fact that the curve $\gamma_{2,\varepsilon}\oplus\kappa_\varepsilon\oplus\tau_{\frac{\varepsilon}{2}}$ (closed at $\infty$ on the left) surrounds the points $[p]\cap\mathbb{C}_J$ in the negative sense, and in the second line we used the connection $S_R^{-1}(s,p)=-S_L^{-1}(p,s)$ between the left and the right Cauchy kernel. The integral along $\infty$, which closes the path on the left, vanishes due to the asymptotics
\begin{equation*}
\Vert Q_{c,s}^{-1}(T)\Vert\,|S_R^{-1}(s,p)|\leq\frac{C_{\varphi_2}}{|s|^2}\frac{|s-\overline{p}|}{|s^2-2p_0s+|p|^2|}=\mathcal{O}(|s|^{-2}),\quad\text{as }s\rightarrow\infty.
\end{equation*}
Analogously we obtain
\begin{equation}\label{Eq_Q_functional_calculus_decaying_5}
0=\frac{1}{2\pi}\int_{\gamma_{2,\varepsilon}\oplus\kappa_\varepsilon\oplus\tau_{\frac{\varepsilon}{2}}}Q_{c,s}^{-1}(T)ds_JS_L^{-1}(p,s),\qquad p\in\ran(\gamma_{1,\varepsilon}),
\end{equation}
since in this case the points $[p]\cap\mathbb{C}_J$ lie outside the integration path and hence the Cauchy integral vanishes. The combination of \eqref{Eq_Q_functional_calculus_decaying_4}, \eqref{Eq_Q_functional_calculus_decaying_6} and \eqref{Eq_Q_functional_calculus_decaying_5}, leads to the formula
\begin{align}
\int_{\gamma_{2,\varepsilon}}Q_{c,s}^{-1}(T)ds_Jf(s)&=\frac{1}{2\pi}\int_{\gamma_{2,\varepsilon}}Q_{c,s}^{-1}(T)ds_J\bigg(\int_{\gamma_{3,\varepsilon}\ominus\sigma_\varepsilon\ominus\gamma_{1,\varepsilon}}S_L^{-1}(p,s)dp_If(p)\bigg) \notag \\
&=\int_{\gamma_{3,\varepsilon}}\bigg(Q_{c,p}^{-1}(T)-\frac{1}{2\pi}\int_{\kappa_\varepsilon\oplus\tau_{\frac{\varepsilon}{2}}}Q_{c,s}^{-1}(T)ds_JS_L^{-1}(p,s)\bigg)dp_If(p) \notag \\
&\quad-\frac{1}{2\pi}\int_{\sigma_\varepsilon}\bigg(\int_{\gamma_{2,\varepsilon}}Q_{c,s}^{-1}(T)ds_JS_L^{-1}(p,s)\bigg)dp_If(p) \notag \\
&\quad+\frac{1}{2\pi}\int_{\gamma_{1,\varepsilon}}\bigg(\int_{\kappa_\varepsilon\oplus\tau_{\frac{\varepsilon}{2}}}Q_{c,s}^{-1}(T)ds_JS_L^{-1}(p,s)\bigg)dp_If(p) \notag \\
&\hspace{-2cm}=\int_{\gamma_{3,\varepsilon}}Q_{c,p}^{-1}(T)dp_If(p)-\frac{1}{2\pi}\int_{\sigma_\varepsilon}\int_{\gamma_{2,\varepsilon}\oplus\kappa_\varepsilon\oplus\tau_{\frac{\varepsilon}{2}}}Q_{c,s}^{-1}(T)ds_JS_L^{-1}(p,s)dp_If(p), \label{Eq_Q_functional_calculus_decaying_7}
\end{align}
where in the last equation we combined the integration path $\gamma_{1,\varepsilon}\ominus\gamma_{3,\varepsilon}$ and replaced it by $\sigma_\varepsilon$, see also the graphic in \eqref{Eq_Q_functional_calculus_decaying_9}. This is allowed by the Cauchy theorem since the integral along $\infty$, which closes the path on the right, vanishes due the asymptotics \eqref{Eq_Q_functional_calculus_decaying_8}, and since the singularities of $S_L^{-1}(p,s)$ lie on $\kappa_\varepsilon\oplus\tau_{\frac{\varepsilon}{2}}$, which is outside the integration path, see the graphic in \eqref{Eq_Q_functional_calculus_decaying_10}. Finally, we now perform the limit $\varepsilon\rightarrow 0^+$ to this equation and show that the double integral in \eqref{Eq_Q_functional_calculus_decaying_7} vanishes. Therefore, we estimate the integrand by
\begin{equation*}
\big|Q_{c,s}^{-1}(T)S_L^{-1}(p,s)f(p)\big|\leq\frac{C_{\varphi_1}}{|s|^2}\frac{|p|+|s|}{|p^2-2s_0p+|s|^2|}\frac{C_\alpha|p|^{1+\alpha}}{1+|p|^{1+2\alpha}}\leq\frac{C_{\varphi_1}C_\alpha(|p|+|s|)|p|^{1+\alpha}}{|s|^2|p-s_j||p-\overline{s_j}|},
\end{equation*}
where $\{s_I,\overline{s_I}\}=[s]\cap\mathbb{C}_I$ are the intersections of $[s]$ with the complex plane $\mathbb{C}_I$. The first part of the integral then vanishes because of
\begin{align*}
\bigg|\int_{\sigma_\varepsilon}\int_{\gamma_{2,\varepsilon}\oplus\kappa_\varepsilon}&Q_{c,s}^{-1}(T)ds_JS_L^{-1}(p,s)dp_If(p)\bigg| \\
&\leq 2C_{\varphi_1}C_\alpha\int_{\varphi_1<|\varphi|<\varphi_3}\int_{\frac{\varepsilon}{2}}^\infty\frac{(\varepsilon+r)\varepsilon^{1+\alpha}}{r^2|\varepsilon e^{I\varphi}-re^{I\varphi_2}||\varepsilon e^{I\varphi}-re^{-I\varphi_2}|}\varepsilon drd\varphi \\
&=2C_{\varphi_1}C_\alpha\varepsilon^\alpha\int_{\varphi_1<|\varphi|<\varphi_3}\int_{\frac{1}{2}}^\infty\frac{1+\rho}{\rho^2|e^{I\varphi}-\rho e^{I\varphi_2}||e^{I\varphi}-\rho e^{-I\varphi_2}|}d\rho d\varphi\overset{\varepsilon\rightarrow 0^+}{\longrightarrow}0,
\end{align*}
where the double integral in the last line exists since the singularities $(\rho,\varphi)=(1,\pm\varphi_2)$ of the denominator is of order $1$, which is integrable in the two-dimensional integral. Also the second part of the double integral in \eqref{Eq_Q_functional_calculus_decaying_7} vanishes, since
\begin{align*}
\bigg|\int_{\sigma_\varepsilon}\int_{\tau_{\frac{\varepsilon}{2}}}&Q_{c,s}^{-1}(T)ds_JS_L^{-1}(p,s)dp_If(p)\bigg| \\
&\leq C_{\varphi_1}C_\alpha\int_{\varphi_1<|\varphi|\varphi_3}\int_{\varphi_2}^{2\pi-\varphi_2}\frac{(\varepsilon+\frac{\varepsilon}{2})\varepsilon^{1+\alpha}}{\frac{\varepsilon^2}{4}|\varepsilon e^{I\varphi}-\frac{\varepsilon}{2}e^{I\phi}||\varepsilon e^{I\varphi}-\frac{\varepsilon}{2}e^{-I\phi}|}\varepsilon\frac{\varepsilon}{2}d\phi d\varphi \\
&=3C_{\varphi_1}C_\alpha\varepsilon^\alpha\int_{\varphi_1<|\varphi|<\varphi_3}\int_{\varphi_2}^{2\pi-\varphi_2}\frac{1}{|e^{I\varphi}-\frac{1}{2}e^{I\phi}||e^{I\varphi}-\frac{1}{2}e^{-I\phi}|}d\phi d\varphi\overset{\varepsilon\rightarrow 0^+}{\longrightarrow}0.
\end{align*}
Hence, in the limit $\varepsilon\rightarrow 0^+$ the equation \eqref{Eq_Q_functional_calculus_decaying_7} turns into the desired independence of the imaginary unit
\begin{equation*}
\int_{\partial(S_{\varphi_2}\cap\mathbb{C}_J)}Q_{c,s}^{-1}(T)ds_Jf(s)=\int_{\partial(S_{\varphi_3}\cap\mathbb{C}_I)}Q_{c,p}^{-1}(T)dp_If(p).
\end{equation*}
For the \textit{independence of the kernel of} $D$, as in \cite{CDP23}, we consider two functions $f,g\in\mathcal{SH}_L(S_\theta)$ with $Df(s)=Dg(s)$. Due to the decomposition \eqref{Eq_Holomorphic_decomposition} we can write
\begin{equation*}
f(u+Jv)-g(u+Jv)=\alpha(u,v)+J\beta(u,v).
\end{equation*}
Moreover, we can write the Cauchy-Fueter operator in spherical coordinates with respect to this decomposition $x=u+Jv$, with $u\in\mathbb{R}$ and $v>0$, as
\begin{equation*}
D=\frac{\partial}{\partial x_0}+e_1\frac{\partial}{\partial x_1}+e_2\frac{\partial}{\partial x_2}+e_3\frac{\partial}{\partial x_3}=\frac{\partial}{\partial u}+J\frac{\partial}{\partial v}+\frac{J}{v}\Gamma_J,
\end{equation*}
where $\Gamma_J$ is a symbol for the angular derivatives. Using now the identity $J\Gamma_JJ=\Gamma_J-2$, see \cite[Paragraph 1.12.1]{DSS92}, it follows from $D(f-g)(s)=0$ that the functions $\alpha$ and $\beta$ satisfy the Vekua-type differential equations
\begin{align*}
\frac{\partial}{\partial u}\alpha(u,v)-\frac{\partial}{\partial v}\beta(u,v)&=\frac{2}{v}\beta(u,v), \\
\frac{\partial}{\partial v}\alpha(u,v)+\frac{\partial}{\partial u}\beta(u,v)&=0.
\end{align*}
However, $\alpha$ and $\beta$ also satisfy the Cauchy-Riemann equations \eqref{Eq_Cauchy_Riemann_equations} since $f-g$ is slice hyperholomorphic, and hence we conclude $\beta(u,v)=0$ and consequently also $\frac{\partial}{\partial u}\alpha(u,v)=\frac{\partial}{\partial v}\alpha(u,v)=0$ have to vanish. Since the domain $S_\theta$ on which the function $f-g$ is holomorphic is connected, this implies, that the function
\begin{equation}\label{Eq_Q_functional_calculus_decaying_16}
f(s)-g(s)=c\quad\text{is constant for every }s\in S_\theta\text{ with }\Im(s)\neq 0.
\end{equation}
By continuity, the difference $f(s)-g(s)=c$ then has to be constant for every $s\in S_\theta$. However, this is only possible for $c=0$, since otherwise $f-g\notin\Psi_L^Q(S_\theta)$. This means that $f=g$ and then $Df(T)=Dg(T)$ follows from the integral \eqref{Eq_Q_functional_calculus_decaying}.
\end{proof}

In the next lemma we state same basic properties of the functional calculus \eqref{Eq_Q_functional_calculus_decaying}.

\begin{lem}\label{lem_Properties_Q_functional_calculus_decaying}
Let $T\in\mathcal{KC}(V)$ with $T,\overline{T}$ being operators of type $\omega$, $f,g\in\Psi^Q_L(S_\theta)$, for some $\theta\in(\omega,\pi)$. Then

\medskip

\begin{enumerate}
\item[i)] $Df(T)$ is a bounded operator with commuting components.
\item[ii)] $D(f+g)(T)=Df(T)+Dg(T)$,
\item[iii)] $Df(\overline{T})=Df(T)$,
\item[iv)] If $f$ is intrinsic, then $\overline{Df(T)}=Df(T)$.
\end{enumerate}

The same results hold true for $f,g\in\Psi_R^Q(S_\theta)$ and the corresponding functional calculus \eqref{Eq_Q_functional_calculus_decaying_right}.
\end{lem}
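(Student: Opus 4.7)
Items (ii) and (iii) are immediate. Linearity of the integral \eqref{Eq_Q_functional_calculus_decaying} in $f$ yields (ii). For (iii), observe that $\overline{T}$ has $(\overline{T})_0 = T_0$ and $|\overline{T}|^2 = T_0^2 + T_1^2 + T_2^2 + T_3^2 = |T|^2$, so $Q_{c,s}(\overline{T}) = s^2\mathcal{I} - 2T_0 s + |T|^2 = Q_{c,s}(T)$ and the two defining integrals are literally identical.

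The key point is (iv). A direct inversion using the commutativity of $T_0$ and $|T|^2$ shows that for $s = u+Jv$,
\[
Q_{c,s}^{-1}(T) = P(u,v) - J\,Q(u,v),
\]
where $P(u,v)$ and $Q(u,v)$ are bounded two-sided linear operators built only from $T_0$, $|T|^2$, $u$, $v$. With the path $\gamma(t) = \pm t e^{\mp J\varphi}$ one computes $\gamma'(t)/J = -\sin\varphi \mp J\cos\varphi$, again of the form $(\text{real}) + J(\text{real})$, and for intrinsic $f$ we have $f(\gamma(t)) = \alpha_t + J\beta_t$ with real $\alpha_t, \beta_t$. Multiplying these three factors and using $J^2 = -1$ shows that the integrand has the form $X_t + J\,Y_t$, where $X_t, Y_t$ are two-sided linear operators that do not depend on $J$ (they depend only on $T_0$, $|T|^2$, and real quantities). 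Integration yields $Df(T) = C + JD$ with $C, D$ two-sided linear and independent of $J$. By Theorem~\ref{thm_Q_functional_calculus_decaying} the left-hand side is independent of $J \in \mathbb{S}$; writing this identity once with $J = e_1$ and once with $J = e_2$ gives $(e_1 - e_2)D = 0$, and invertibility of $e_1 - e_2$ in $\mathbb{H}$ forces $D = 0$. Hence $Df(T) = C$ is two-sided linear, so $\overline{Df(T)} = Df(T)$.

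Part (i) then follows. Boundedness and everywhere-definedness are contained in the absolute convergence bound \eqref{Eq_Q_functional_calculus_decaying_15}. For commuting components, split $\alpha = \sum_{i=0}^{3} \alpha_i e_i$, $\beta = \sum_{i=0}^{3} \beta_i e_i$ in the decomposition $f(u+Jv) = \alpha(u,v) + J\beta(u,v)$ into real-valued components. Since $J\beta_i = \beta_i J$, we get $f = \sum_{i=0}^{3} f^{(i)} e_i$ with the intrinsic functions $f^{(i)}(u+Jv) := \alpha_i(u,v) + J\beta_i(u,v)$, each lying in $\Psi^Q(S_\theta)$: the decay is transferred via the identities $\alpha(u,v) = \tfrac{1}{2}(f(u+Jv)+f(u-Jv))$ and $J\beta(u,v) = \tfrac{1}{2}(f(u+Jv)-f(u-Jv))$, combined with $|\alpha_i|, |\beta_i| \leq \max(|\alpha|, |\beta|)$. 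By linearity, $Df(T) = \sum_{i=0}^{3} Df^{(i)}(T)\,e_i$, and by (iv) each $Df^{(i)}(T)$ is a two-sided linear operator built from $T_0$ and $|T|^2$. Collecting $e_i$-components of $Df(T)$ then expresses its four components as linear combinations of these mutually commuting two-sided linear operators, and hence they commute pairwise. The right-slice case is handled by the same arguments applied to the mirrored integral \eqref{Eq_Q_functional_calculus_decaying_right}.

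The main obstacle is (iv): the quaternion bookkeeping through the explicit inversion of $Q_{c,s}(T)$, the factor $\gamma'(t)/J$, and the intrinsic decomposition of $f$, to reveal the clean ``two-sided-linear $+J$(two-sided-linear)'' structure of the integrand; once this is in place, the $J$-independence from Theorem~\ref{thm_Q_functional_calculus_decaying} collapses the $J$-part, and the reduction of (i) to (iv) via $f = \sum f^{(i)} e_i$ is routine.
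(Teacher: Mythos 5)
Your proof is correct, but it takes a genuinely different route from the paper for both (iv) and (i). For (iv), the paper writes the integral over $[0,\infty)$, explicitly substitutes $Q_{c,te^{\pm J\varphi}}(T)=t^2e^{\pm 2J\varphi}-2te^{\pm J\varphi}T_0+|T|^2$, and computes the cancellation of the $J$-terms by hand, arriving at the $J$-free formula \eqref{Eq_Properties_Q_functional_calculus_decaying_4}; you instead only observe the coarse structure $Df(T)=C+JD$ with $C,D$ built from $T_0,|T|^2$ and then invoke the $J$-independence of Theorem~\ref{thm_Q_functional_calculus_decaying} to force $D=0$ --- a softer argument that avoids the trigonometric bookkeeping but relies on a deeper result. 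For (i), the paper decomposes $\alpha=\sum e_i\alpha_i$, $\beta=\sum e_i\beta_i$ inside the explicit integrand and reads off the component formula \eqref{Eq_Properties_Q_functional_calculus_decaying_2}; you instead decompose $f=\sum_i f^{(i)}e_i$ into intrinsic pieces and reduce (i) to (iv) via right-linearity of the functional calculus, which is conceptually cleaner. Two cautions worth spelling out: the pairwise commutativity of the components $Df^{(i)}(T)$ does not follow from their being two-sided linear alone --- it requires (as you do note in passing) that they are built from the commuting operators $T_0,|T|^2$, a fact that your argument should surface explicitly once $D=0$ is established; and the paper's choice to produce the explicit formula \eqref{Eq_Properties_Q_functional_calculus_decaying_4} is not gratuitous, as that formula is reused directly in the proof of Proposition~\ref{prop_Commutation_with_Df}, whereas your slicker derivation does not yield it.
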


\begin{proof}
The boundedness of $Df(T)$ in the statement i) follows from the estimate \eqref{Eq_Q_functional_calculus_decaying_15}. The linearity in ii) follows from the linearity of the integral \eqref{Eq_Q_functional_calculus_decaying}. In order to show that $Df(T)$ has commuting components, we fix $J\in\mathbb{S}$ and $\varphi\in(\omega,\pi)$ and write the integral \eqref{Eq_Q_functional_calculus_decaying} as
\begin{align}
Df(T)&=\frac{-1}{\pi}\int_{\partial(S_\varphi\cap\mathbb{C}_J)}Q_{c,s}^{-1}(T)ds_Jf(s) \notag \\
&=\frac{-1}{\pi}\bigg(\int_{-\infty}^0Q_{c,-te^{J\varphi}}^{-1}(T)Je^{J\varphi}f(-te^{J\varphi})dt-\int_0^\infty Q_{c,te^{-J\varphi}}^{-1}(T)Je^{-J\varphi}f(te^{-J\varphi})dt\bigg) \notag \\
&=\frac{-1}{\pi}\int_0^\infty\Big(Q_{c,te^{J\varphi}}^{-1}(T)Je^{J\varphi}f(te^{J\varphi})-Q_{c,te^{-J\varphi}}^{-1}(T)Je^{-J\varphi}f(te^{-J\varphi})\Big)dt \notag \\
&=\frac{-1}{\pi}\int_0^\infty\Big(Q_{c,te^{-J\varphi}}(T)Je^{J\varphi}f(te^{J\varphi})-Q_{c,te^{J\varphi}}(T)Je^{-J\varphi}f(te^{-J\varphi})\Big)|Q_{c,te^{J\varphi}}(T)|^{-2}dt, \label{Eq_Properties_Q_functional_calculus_decaying_5}
\end{align}
where in the last line we rewrote the commutative pseudo resolvent as
\begin{equation*}
Q_{c,s}^{-1}(T)=Q_{c,\overline{s}}(T)Q_{c,\overline{s}}^{-1}(T)Q_{c,s}^{-1}(T)=Q_{c,\overline{s}}(T)|Q_{c,s}(T)|^{-2},
\end{equation*}
with
\begin{equation}\label{Eq_Properties_Q_functional_calculus_decaying_3}
|Q_{c,s}(T)|^2=|s|^4\mathcal{I}-4s_0|s|^2T_0+4s_0^2|T|^2-2|s|^2|T|^2-4s_0T_0|T|^2+4|s|^2T_0^2+|T|^4.
\end{equation}
Using also the explicit value
\begin{equation}\label{Eq_Properties_Q_functional_calculus_decaying_6}
Q_{c,te^{\pm J\varphi}}(T)=t^2e^{\pm 2J\varphi}-2te^{\pm J\varphi}T_0+|T|^2
\end{equation}
from \eqref{Eq_Commutative_Q_resolvent} as well as the decomposition
\begin{equation}\label{Eq_Properties_Q_functional_calculus_decaying_1}
f(te^{J\varphi})=\alpha(t\cos\varphi,t\sin\varphi)+J\beta(t\cos\varphi,t\sin\varphi),
\end{equation}
from \eqref{Eq_Holomorphic_decomposition}, makes this integral
\begin{align}
Df(T)&=\frac{-1}{\pi}\int_0^\infty\Big(\big(Q_{c,te^{-J\varphi}}(T)e^{J\varphi}-Q_{c,te^{J\varphi}}(T)e^{-J\varphi}\big)J\alpha(t\cos\varphi,t\sin\varphi) \notag \\
&\hspace{2.3cm}-\big(Q_{c,te^{-J\varphi}}(T)e^{J\varphi}+Q_{c,te^{J\varphi}}(T)e^{-J\varphi} \big)\beta(t\cos\varphi,t\sin\varphi)\Big)|Q_{c,te^{J\varphi}}(T)|^{-2}dt \notag \\
&=\frac{2}{\pi}\int_0^\infty\Big((|T|^2-t^2)\sin\varphi\alpha(t\cos\varphi,t\sin\varphi) \notag \\
&\hspace{2.5cm}+\big((|T|^2+t^2)\cos\varphi-2tT_0\big)\beta(t\cos\varphi,t\sin\varphi)\Big)|Q_{c,te^{J\varphi}}(T)|^{-2}dt. \label{Eq_Properties_Q_functional_calculus_decaying_4}
\end{align}
If we now decompose
\begin{equation*}
\alpha(u,v)=\alpha_0(u,v)+\sum\limits_{i=1}^3e_i\alpha_i(u,v)\qquad\text{and}\qquad\beta(u,v)=\beta_0(u,v)+\sum\limits_{i=1}^3e_i\beta_i(u,v),
\end{equation*}
into their components, the components $Df(T)_i$, $i\in\{0,\dots,3\}$ of the harmonic functional calculus are explicitly given by
\begin{align}
Df(T)_i&=\frac{2}{\pi}\int_0^\infty\Big((|T|^2-t^2)\sin\varphi\alpha_i(t\cos\varphi,t\sin\varphi) \notag \\
&\hspace{2.3cm}+\big((|T|^2+t^2)\cos\varphi-2tT_0\big)\beta_i(t\cos\varphi,t\sin\varphi)\Big)|Q_{c,te^{J\varphi}}(T)|^{-2}dt, \label{Eq_Properties_Q_functional_calculus_decaying_2}
\end{align}
which are obviously commuting, since the components of $T$ are commuting and there are no imaginary units involved.

\medskip

The statement ii) follows from the fact, that the resolvent $Q_{c,s}^{-1}(T)=Q_{c,s}^{-1}(\overline{T})$ is the same for $T$ and for $\overline{T}$ by definition \eqref{Eq_Commutative_Q_resolvent}. Hence also $Dg(T)=Dg(\overline{T})$ follows immediately.

\medskip

For the proof of iii) we note that by Definition~\ref{defi_Slice_hyperholomorphic_functions}, the functions $\alpha$ and $\beta$ in the decomposition \eqref{Eq_Properties_Q_functional_calculus_decaying_1} are real valued since $f$ is intrinsic. Then it follows from the explicit representation \eqref{Eq_Properties_Q_functional_calculus_decaying_2} that $Df(T)_1=Df(T)_2=Df(T)_3=0$. Hence only the term $Df(T)=Df(T)_0$ remains and the conjugate operator is $\overline{Df(T)}=Df(T)$. \qedhere
\end{proof}

To the best of our knowledge, a similar statement as Lemma \ref{lem_Properties_Q_functional_calculus_decaying} is not known for the $S$-functional calculus \eqref{Eq_S_functional_calculus_decaying}. However, we will need those results in Section \ref{sec_Harmonic_functional_calculus_for_growing_functions}, and hence prove the following lemma regarding properties of the commuting components of the $S$-functional calculus.

\begin{lem}\label{lem_Properties_S_functional_calculus_decaying}
Let $T\in\mathcal{KC}(V)$ with $T,\overline{T}$ being operators of type $\omega$, and $f,g\in\Psi_L(S_\theta)$ or $f,g\in\Psi_R(S_\theta)$, for some $\theta\in(\omega,\pi)$. Then

\medskip

\begin{enumerate}
\item[i)] $f(T)$ is a bounded operator with commuting components.
\item[ii)] $(f+g)(T)=f(T)+g(T)$
\item[iii)] If $f$ is intrinsic, then $\overline{f(T)}=f(\overline{T})$.
\end{enumerate}
\end{lem}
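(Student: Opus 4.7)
The plan is to follow closely the scheme of Lemma~\ref{lem_Properties_Q_functional_calculus_decaying}, using the factorisation $S_L^{-1}(s,T)=(s\mathcal{I}-\overline{T})Q_{c,s}^{-1}(T)$ from \eqref{Eq_S_resolvent} to reduce the $S$-resolvent to the intrinsic commutative resolvent $Q_{c,s}^{-1}(T)$ already analysed there. The right case $f,g\in\Psi_R(S_\theta)$ is handled analogously via \eqref{Eq_S_resolvent_on_domT}.

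For (i), boundedness follows from the bound $\Vert S_L^{-1}(s,T)\Vert\leq C_\theta/|s|$ of \eqref{Eq_S_resolvent_estimates} combined with the $\Psi_L(S_\theta)$-decay of $f$, by the same absolute convergence argument as in \eqref{Eq_Q_functional_calculus_decaying_15} (noting that $|t|^{\alpha-1}/(1+|t|^{1+2\alpha})$ is integrable for $\alpha>0$). For the commuting components, I would split
\begin{equation*}
S_L^{-1}(s,T)=(s\mathcal{I}-T_0)Q_{c,s}^{-1}(T)+\sum_{i=1}^3e_iT_iQ_{c,s}^{-1}(T),
\end{equation*}
parametrise the boundary by $s_\pm=te^{\pm J\varphi}$, expand $Q_{c,s_\pm}^{-1}(T)=Q_{c,\overline{s_\pm}}(T)|Q_{c,s_\pm}(T)|^{-2}$ via \eqref{Eq_Properties_Q_functional_calculus_decaying_3} and \eqref{Eq_Properties_Q_functional_calculus_decaying_6}, and combine the two branches using the symmetries $\alpha(u,-v)=\alpha(u,v)$, $\beta(u,-v)=-\beta(u,v)$ from \eqref{Eq_Holomorphic_decomposition}. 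After this reorganisation every term in the integrand is a product of mutually commuting operators in $\{T_0,T_1,T_2,T_3,|T|^2,|Q_{c,s}(T)|^{-2}\}$ with $\mathbb{H}$-valued scalar weights, so collecting the coefficients of $1,e_1,e_2,e_3$ gives explicit integral representations for the four components $f(T)_0,\ldots,f(T)_3$ inside a commutative operator subalgebra, which in particular shows that they commute pairwise. Statement (ii) is immediate from the linearity of the integral \eqref{Eq_S_functional_calculus_decaying}.

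For (iii), since $Q_{c,s}^{-1}(\overline{T})=Q_{c,s}^{-1}(T)$ by \eqref{Eq_Commutative_Q_resolvent} and $\overline{\overline{T}}=T$, we get
\begin{equation*}
f(\overline{T})=\frac{1}{2\pi}\int_{\partial(S_\varphi\cap\mathbb{C}_J)}(s\mathcal{I}-T)Q_{c,s}^{-1}(T)ds_Jf(s),
\end{equation*}
which is obtained from the formula for $f(T)$ simply by the sign-flip $T_i\mapsto-T_i$ ($i=1,2,3$) in the prefactor, while $T_0$ and $Q_{c,s}^{-1}(T)$ are left untouched. For intrinsic $f$ the weights $\alpha,\beta$ in the component representation from (i) are real, and combining the two branches $s_\pm$ reveals that $f(T)_0$ depends on $T_1,T_2,T_3$ only through the intrinsic combination $|T|^2$, while each $f(T)_i$ with $i\in\{1,2,3\}$ carries a single linear factor of $T_i$ inherited from the decomposition of $-\overline{T}$. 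Therefore the sign-flip leaves $f(T)_0$ invariant and negates $f(T)_1,f(T)_2,f(T)_3$, which, thanks to the commutativity of the components proved in (i), is exactly the conjugate $\overline{f(T)}=f(T)_0-e_1f(T)_1-e_2f(T)_2-e_3f(T)_3$.

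The main obstacle lies in the explicit bookkeeping of (i): in contrast to the harmonic case of Lemma~\ref{lem_Properties_Q_functional_calculus_decaying}, where $Q_{c,s}^{-1}(T)$ was already intrinsic and the four components fell out cleanly as in \eqref{Eq_Properties_Q_functional_calculus_decaying_2}, here the extra prefactor $s\mathcal{I}-\overline{T}$ mixes the $\mathbb{C}_J$-valued scalars $s_\pm$ with the three imaginary components $e_iT_i$. Verifying that after combining the two branches the cross products $e_ie_j$ with $i\neq j$ cancel in the right way — thereby securing both the commutative-subalgebra structure needed for (i) and the odd/even symmetry driving (iii) — is the heart of the argument, although it amounts to an elementary algebraic expansion.
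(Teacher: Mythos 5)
Your proposal is correct and follows essentially the same route as the paper: factorise $S_L^{-1}(s,T)=(s\mathcal{I}-\overline{T})Q_{c,s}^{-1}(T)$, expand $Q_{c,s}^{-1}(T)=Q_{c,\overline{s}}(T)|Q_{c,s}(T)|^{-2}$, combine the two branches $s_\pm=te^{\pm J\varphi}$ using the parity of $\alpha,\beta$, and read off that each component of $f(T)$ is a real combination of powers of $T_0,\ldots,T_3$ (hence the components commute), while for intrinsic $f$ the only imaginary units stem from the $\overline{T}$ prefactor so that quaternionic conjugation of $f(T)$ coincides with the sign-flip $T\leftrightarrow\overline{T}$. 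One minor imprecision worth flagging: the cross terms $e_ie_j$ with $i\neq j$ do not cancel, they merely redistribute into the remaining imaginary units, which is harmless for the decomposition into commuting components.
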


\begin{proof}
The boundedness i) of the $S$-functional calculus \eqref{Eq_S_functional_calculus_decaying} follows from the estimate
\begin{equation*}
\int_{\mathbb{R}\setminus\{0\}}\big\Vert S_L^{-1}(\gamma(t),T)\big\Vert\,|\gamma'(t)|\,|f(\gamma(t))|dt\leq C_\varphi C_\alpha\int_{\mathbb{R}\setminus\{0\}}\frac{|t|^{\alpha-1}}{1+|t|^{1+2\alpha}}dt<\infty,
\end{equation*}
which is similar to \eqref{Eq_Q_functional_calculus_decaying_15}, using the bound \eqref{Eq_S_resolvent_estimates} of the $S$-resolvent and the definition of the space $\Psi_L(S_\theta)$ in Remark \ref{rem_Psi_spaces}. The linearity ii) is also clear by the linearity of the integral \eqref{Eq_S_functional_calculus_decaying}. For the proof of iii) we fix $J\in\mathbb{S}$, $\varphi\in(\omega,\pi)$ and derive in a similar way as in \eqref{Eq_Properties_Q_functional_calculus_decaying_5} with $Q_{c,s}^{-1}(T)$ replaced by $S_L^{-1}(s,T)=(s \mathcal{I}-\overline{T})Q_{c,s}^{-1}(T)$ the formula
\begin{align}
f(T)&=\frac{1}{2\pi}\int_0^\infty\Big((te^{J\varphi}-\overline{T})Q_{c,te^{-J\varphi}}(T)Je^{J\varphi}f(te^{J\varphi}) \notag \\
&\hspace{2.3cm}-(te^{-J\varphi}-\overline{T})Q_{c,te^{J\varphi}}(T)Je^{-J\varphi}f(te^{-J\varphi})\Big)|Q_{c,te^{J\varphi}}(T)|^{-2}dt. \label{Eq_Properties_S_functional_calculus_decaying_1}
\end{align}
Using the explicit value \eqref{Eq_Properties_Q_functional_calculus_decaying_6} of $Q_{c,te^{\pm J\varphi}}(T)$ and the decomposition \eqref{Eq_Properties_Q_functional_calculus_decaying_1} of $f$, we further rewrite the integral \eqref{Eq_Properties_S_functional_calculus_decaying_1} in the same as we derived \eqref{Eq_Properties_Q_functional_calculus_decaying_3}, such that we get
\begin{align}
f(T)&=\frac{1}{\pi}\int_0^\infty\Big(\big(2tT_0\sin\varphi-t|T|^2\sin(2\varphi)\big)\alpha(t\cos\varphi,t\sin\varphi) \notag \\
&\hspace{2.3cm}-\big(t^2\sin\varphi-|T|^2\sin\varphi\big)\overline{T}\,\alpha(t\cos\varphi,t\sin\varphi) \notag \\
&\hspace{2.3cm}-\big(t^3-2t^2T_0\cos\varphi+t|T|^2\cos(2\varphi)\big)\beta(t\cos\varphi,t\sin\varphi) \notag \\
&\hspace{2.3cm}+\big(t^2\cos\varphi-2tT_0+|T|^2\cos\varphi\big)\overline{T}\,\beta(t\cos\varphi,t\sin\varphi)\Big)|Q_{c,te^{J\varphi}}(T)|^{-2}dt. \label{Eq_Properties_S_functional_calculus_decaying_2}
\end{align}
If we finally write
\begin{equation*}
\alpha=\alpha_0+\sum\limits_{i=1}^3e_i\alpha_i,\quad\beta=\beta_0+\sum\limits_{i=1}^3e_i\beta_i\quad\text{and}\quad\overline{T}=T_0-\sum\limits_{i=1}^3e_iT_i,
\end{equation*}
with real valued functions $\alpha_i,\beta_i:\mathcal{U}\rightarrow\mathbb{R}$, $i\in\{0,1,2,3\}$, and sort \eqref{Eq_Properties_S_functional_calculus_decaying_2} with respect to the imaginary units, we end up with components of $f(T)$ which are real linear combinations of powers of $T_i$, $i\in\{0,1,2,3\}$. Due to the commutativity of the components $T_i$, also the components of $f(T)$ are then commuting. Hence i) is proven. For the proof of iii), we note that $\alpha$ and $\beta$ are real valued, since $f$ is assumed to be intrinsic. Hence we immediately obtain the identity $\overline{f(T)}=f(\overline{T})$ from \eqref{Eq_Properties_S_functional_calculus_decaying_2}.
\end{proof}

\begin{prop}\label{prop_Commutation_with_Df}
Let $T\in\mathcal{KC}(V)$ with $T,\overline{T}$ being operators of type $\omega$, $B:V\rightarrow V$ an everywhere defined bounded operator and $f\in\Psi^Q(S_\theta)$, for some $\theta\in(\omega,\pi)$. Then there holds
\begin{equation*}
\begin{array}{c} \forall j\in\{0,\dots,3\}: BT_j=T_jB\text{ on }\dom(T) \\ \Downarrow \\ BDf(T)=Df(T)B. \end{array}
\end{equation*}
\end{prop}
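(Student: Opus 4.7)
The plan is to reduce the commutation of $B$ with $Df(T)$ to the commutation of $B$ with the resolvent operator $Q_{c,s}^{-1}(T)$ for each $s\in\rho_S(T)$, and then transfer that commutation past the path integral defining $Df(T)$.

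\textbf{Step 1: Commutation of $B$ with $Q_{c,s}(T)$ on $\dom(T^2)$.} Since $B$ commutes with each $T_j$ on $\dom(T)$, in particular $B$ maps $\dom(T)=\dom(T_j)$ into itself. Iterating, for $\psi\in\dom(T^2)$ we have $T_j\psi\in\dom(T)=\dom(T_i)$ (by the $\mathcal{KC}(V)$ condition $\dom(T^2)\subseteq\dom(T_iT_j)$), and then
\[
BT_iT_j\psi=T_iBT_j\psi=T_iT_jB\psi,
\]
so $B$ commutes with every $T_iT_j$ on $\dom(T^2)$; in particular with $|T|^2=T_0^2+T_1^2+T_2^2+T_3^2$. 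Together with commutation with $T_0$ on $\dom(T)\supseteq\dom(T^2)$ and the trivial commutation with the scalar $s^2\mathcal{I}$, this yields
\[
BQ_{c,s}(T)=Q_{c,s}(T)B\quad\text{on }\dom(T^2).
\]
The same iteration also shows $B\colon\dom(T^2)\to\dom(T^2)$.

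\textbf{Step 2: Commutation with $Q_{c,s}^{-1}(T)$.} For $s\in\rho_S(T)$ the operator $Q_{c,s}(T)\colon\dom(T^2)\to V$ is bijective. Given $\psi\in V$, set $\phi:=Q_{c,s}^{-1}(T)\psi\in\dom(T^2)$. By Step~1, $B\phi\in\dom(T^2)$ and
\[
Q_{c,s}(T)B\phi=BQ_{c,s}(T)\phi=B\psi,
\]
so $BQ_{c,s}^{-1}(T)\psi=Q_{c,s}^{-1}(T)B\psi$ for every $\psi\in V$.

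\textbf{Step 3: Passing $B$ through the integral.} Because $B$ is bounded and everywhere defined and the integrand in \eqref{Eq_Q_functional_calculus_decaying} is absolutely integrable in operator norm (by \eqref{Eq_Q_functional_calculus_decaying_15}), $B$ can be moved under the integral sign. Then, using Step~2 and the two-sided linearity of $B$ (which lets it pass through the quaternionic right-factor $ds_J f(s)$; this factor is a scalar in $\mathbb{H}$, and we use that $f\in\Psi^Q(S_\theta)$ is intrinsic so no ambiguity in side-multiplication arises), we compute
\[
BDf(T)=\frac{-1}{\pi}\int_{\partial(S_\varphi\cap\mathbb{C}_J)}BQ_{c,s}^{-1}(T)\,ds_J\,f(s)=\frac{-1}{\pi}\int_{\partial(S_\varphi\cap\mathbb{C}_J)}Q_{c,s}^{-1}(T)\,ds_J\,f(s)\,B=Df(T)B.
\]

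The only delicate point is the bookkeeping of domains in Step~1: one has to use the $\mathcal{KC}(V)$-hypothesis $\dom(T^2)\subseteq\dom(T_iT_j)$ to legitimately apply the componentwise commutation $BT_j=T_jB$ twice; Steps~2 and~3 are then formal once the absolute convergence established in Theorem~\ref{thm_Q_functional_calculus_decaying} is invoked.
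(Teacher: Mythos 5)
Your proof has a genuine gap, and it is precisely the one the paper's representation \eqref{Eq_Properties_Q_functional_calculus_decaying_4} is designed to avoid. In Step~1 you invoke ``the trivial commutation with the scalar $s^2\mathcal{I}$'', and in Step~3 you explicitly use ``the two-sided linearity of $B$''. But the hypothesis is only that $B$ is an everywhere-defined \emph{bounded} (hence right-linear) operator with $BT_j=T_jB$ on $\dom(T)$; two-sided linearity of $B$ is not assumed. A right-linear operator need not commute with \emph{left} multiplication by a non-real quaternion, i.e.\ in general $B(s^2 v)\neq s^2 B(v)$. Since $Q_{c,s}(T)=s^2\mathcal{I}-2T_0 s+|T|^2$ contains the left-multiplication factors $s^2\mathcal{I}$ and $-2T_0 s$, with $s\in\mathbb{C}_J\setminus\mathbb{R}$ along the integration contour, your claimed commutation $BQ_{c,s}(T)=Q_{c,s}(T)B$ on $\dom(T^2)$ is false, and Steps~2 and~3 collapse with it. The same objection applies to Step~3's attempt to slide $B$ past the quaternionic factor $ds_Jf(s)$: here $f$ intrinsic merely places $f(s)\in\mathbb{C}_J$, which does not make left multiplication by $ds_Jf(s)$ commute with a general right-linear $B$.

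This matters concretely: the proposition is applied in Corollary~\ref{cor_Commutation_g_with_Df} (and further in Section~\ref{sec_Harmonic_functional_calculus_for_growing_functions}) with $B=g(T)$ for $g\in\Psi^Q_L(S_\theta)$ or $g\in\Psi_R^Q(S_\theta)$; such $g(T)$ is right-linear but in general \emph{not} two-sided linear, so your argument would not cover the intended use.

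The paper's proof circumvents the problem by first passing from \eqref{Eq_Q_functional_calculus_decaying} to the representation \eqref{Eq_Properties_Q_functional_calculus_decaying_4} established in Lemma~\ref{lem_Properties_Q_functional_calculus_decaying}. There the $+\varphi$ and $-\varphi$ rays are combined and, since $f$ is intrinsic so $\alpha,\beta$ are real-valued, \emph{all} quaternionic imaginary units cancel from the integrand: what remains is a real-coefficient combination of $T_0$, $|T|^2$ and $|Q_{c,te^{J\varphi}}(T)|^{-2}$. An operator commuting with the components $T_j$ automatically commutes with each of these ``real'' building blocks (and with $|Q_{c,te^{J\varphi}}(T)|^{-2}$ once one notes $|Q_{c,s}(T)|^{2}$ from \eqref{Eq_Properties_Q_functional_calculus_decaying_3} has only real scalar coefficients), with no need for two-sidedness. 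So the route through the imaginary-unit-free representation is not a convenience but the essential point; you would need to reproduce that computation before Step~3 can be made rigorous.

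Two smaller remarks. First, your domain bookkeeping in Step~1 arguing $B:\dom(T^2)\to\dom(T^2)$ is slightly circular, since the $\mathcal{KC}(V)$ hypothesis gives $\dom(T^2)\subseteq\dom(T_iT_j)$ but not the reverse; this is a detail the paper glosses over too and it is fixable. Second, note that $Q_{c,s}^{-1}(T)$ itself does contain quaternionic scalars whenever $s\notin\mathbb{R}$, so there is no way to repair Step~2 without the prior cancellation of imaginary units provided by \eqref{Eq_Properties_Q_functional_calculus_decaying_4}.
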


\begin{proof}
From the assumption it follows that $B$ in particular commutes with the term $|Q_{c,s}(T)|^2$ from \eqref{Eq_Properties_Q_functional_calculus_decaying_3} on $\dom(T^2)$, and hence also with its inverse
\begin{equation*}
|Q_{c,s}(T)|^{-2}B=B|Q_{c,s}(T)|^{-2},\quad\text{on }V.
\end{equation*}
Using this fact in the representation \eqref{Eq_Properties_Q_functional_calculus_decaying_4} of $Df(T)$, immediately gives the commutation
\begin{equation*}
Df(T)B=BDf(T)
\end{equation*}
since there are no imaginary units in the integrand of \eqref{Eq_Properties_Q_functional_calculus_decaying_4}, not even in the functions $\alpha$ and $\beta$, which are real valued since $f$ is assumed to be intrinsic.
\end{proof}

\begin{cor}\label{cor_Commutation_g_with_Df}
Let $T\in\mathcal{KC}(V)$ with $T,\overline{T}$ being operators of type $\omega$, $f\in\Psi^Q(S_\theta)$ and $g\in\Psi_L^Q(S_\theta)$ or $g\in\Psi_R^Q(S_\theta)$, for some $\theta\in(\omega,\pi)$. Then there holds
\begin{equation*}
g(T)Df(T)=Df(T)g(T).
\end{equation*}
\end{cor}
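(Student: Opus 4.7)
The plan is to reduce the claim to Proposition~\ref{prop_Commutation_with_Df} by taking $B := g(T)$. By Lemma~\ref{lem_Properties_S_functional_calculus_decaying}(i), $g(T)$ is a bounded, everywhere defined operator on $V$, so the only thing to verify in order to apply the proposition is that $g(T)$ commutes with each component of $T$, i.e.
\begin{equation*}
g(T)\,T_j = T_j\,g(T) \quad \text{on } \dom(T), \qquad j\in\{0,1,2,3\}.
\end{equation*}
Once this is shown, Proposition~\ref{prop_Commutation_with_Df} applied with the intrinsic $f\in\Psi^Q(S_\theta)$ immediately yields the desired identity $g(T)Df(T) = Df(T)g(T)$.

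To establish the intertwining with $T_j$, I would start from the integral representation
\begin{equation*}
g(T) = \frac{1}{2\pi}\int_{\partial(S_\varphi\cap\mathbb{C}_J)} S_L^{-1}(s,T)\,ds_J\,g(s),
\end{equation*}
valid for $g\in\Psi_L^Q(S_\theta)$ (with the analogous right version for $g\in\Psi_R^Q(S_\theta)$). The key observation is that each $T_j$ commutes with the $S$-resolvent operator $S_L^{-1}(s,T) = (s\mathcal{I}-\overline{T})Q_{c,s}^{-1}(T)$ on $\dom(T)$. Indeed, by the defining property of $\mathcal{KC}(V)$, $T_j$ commutes with $T_0,\dots,T_3$ and hence with $\overline{T}$ and with $Q_{c,s}(T) = s^2\mathcal{I}-2T_0 s + |T|^2$ on $\dom(T^2)$; a standard resolvent manipulation then transports this commutation to $Q_{c,s}^{-1}(T)$ and consequently to $S_L^{-1}(s,T)$ on $\dom(T)$. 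Since $T_j$ is closed and, because $V$ is a two-sided quaternionic Banach space, commutes with right multiplication by quaternions, one can pass $T_j$ under the (absolutely convergent) integral sign, and the pointwise commutation $T_j S_L^{-1}(s,T) = S_L^{-1}(s,T) T_j$ on $\dom(T)$ upgrades to $T_j g(T) = g(T) T_j$ on $\dom(T)$.

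The main obstacle I expect is purely a domain-bookkeeping issue in the second step: justifying the commutation of $T_j$ with $Q_{c,s}^{-1}(T)$ carefully enough that it survives application to a vector in $\dom(T)$, together with the interchange of the closed operator $T_j$ and the path integral. Both are routine in the bounded-component setting, but require some care since $T$ is only of type $\omega$ and hence unbounded; invoking $\ran(S_L^{-1}(s,T))\subseteq\dom(T)$ (as already used in the derivation of \eqref{Eq_S_resolvent_on_domT}) should be enough to close this gap. With that in hand, combining the resulting identity $g(T)T_j=T_jg(T)$ with Proposition~\ref{prop_Commutation_with_Df} completes the proof.
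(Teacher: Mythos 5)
Your proposal is correct and follows essentially the same route as the paper: both proofs take $B=g(T)$, use the integral representation of $g(T)$ together with the commutation $T_jS_L^{-1}(s,T)=S_L^{-1}(s,T)T_j$ on $\dom(T)$ (plus closedness and two-sided linearity of $T_j$ to pass under the integral) to obtain $g(T)T_j=T_jg(T)$ on $\dom(T)$, and then invoke Proposition~\ref{prop_Commutation_with_Df}. Your extra remarks on domain bookkeeping are a reasonable elaboration of what the paper treats as ``clear by definition.''
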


\begin{proof}
We only consider $g\in\Psi_L^Q(S_\theta)$, the case $g\in\Psi_R^Q(S_\theta)$ follows analogously. It is clear by definition \eqref{Eq_S_resolvent}, that $T_jS_L^{-1}(s,T)=S_L^{-1}(s,T)T_j$ commutes on $\dom(T)$ for every $j\in\{0,\dots,3\}$ and hence also
\begin{equation*}
g(T)T_j=\frac{1}{2\pi}\int_{\partial(S_\varphi\cap\mathbb{C}_J)}S_L^{-1}(s,T)ds_Jg(s)T_j=\frac{1}{2\pi}\int_{\partial(S_\varphi\cap\mathbb{C}_J)}T_jS_L^{-1}(s,T)ds_Jg(s)=T_jg(T),
\end{equation*}
on $\dom(T)$. From Proposition~\ref{prop_Commutation_with_Df} we then conclude $g(T)Df(T)=Df(T)g(T)$.
\end{proof}

Next we want to derive the very important product rule of the harmonic functional calculus. The basic ingredient will be the following resolvent equation.

\begin{lem}
Let $T\in\mathcal{KC}(V)$, $s,p\in\rho_S(T)$ with $s\notin[p]$. Then
\begin{subequations}\label{Eq_Q_resolvent_equation}
\begin{align}
Q_{c,s}^{-1}(T)S_L^{-1}(p,s)+S_R^{-1}(s,p)Q_{c,p}^{-1}(T)&=Q_{c,s}^{-1}(T)S_L^{-1}(p,T)+S_R^{-1}(s,\overline{T})Q_{c,p}^{-1}(T) \label{Eq_Q_resolvent_equation1} \\
&=Q_{c,s}^{-1}(T)S_L^{-1}(p,\overline{T})+S_R^{-1}(s,T)Q_{c,p}^{-1}(T). \label{Eq_Q_resolvent_equation2}
\end{align}
\end{subequations}
\end{lem}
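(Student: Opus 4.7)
The plan is to treat the two equalities \eqref{Eq_Q_resolvent_equation1} and \eqref{Eq_Q_resolvent_equation2} separately.

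\textit{The second equality} is straightforward. Since $Q_{c,p}(T)$ depends on $T$ only through $T_0$ and $|T|^2$, we have $Q_{c,p}^{-1}(\overline{T})=Q_{c,p}^{-1}(T)$, which yields
\begin{equation*}
S_L^{-1}(p,T)-S_L^{-1}(p,\overline{T})=\big((p\mathcal{I}-\overline{T})-(p\mathcal{I}-T)\big)Q_{c,p}^{-1}(T)=(T-\overline{T})Q_{c,p}^{-1}(T).
\end{equation*}
An analogous calculation on $\dom(T)$, using \eqref{Eq_S_resolvent_on_domT} with $T$ replaced by $\overline{T}$, gives $S_R^{-1}(s,T)-S_R^{-1}(s,\overline{T})=Q_{c,s}^{-1}(T)(T-\overline{T})$. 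Taking the difference of both sides of \eqref{Eq_Q_resolvent_equation2} then reduces the claim to the tautology $Q_{c,s}^{-1}(T)(T-\overline{T})Q_{c,p}^{-1}(T)=Q_{c,s}^{-1}(T)(T-\overline{T})Q_{c,p}^{-1}(T)$, thanks to the pairwise commutativity of all factors (being polynomial in the commuting components $T_0,\dots,T_3$).

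\textit{For the first equality}, I would multiply both sides on the left by $Q_{c,s}(T)$ and on the right by $Q_{c,p}(T)$. Invoking the identities $S_L^{-1}(p,T)Q_{c,p}(T)=p\mathcal{I}-\overline{T}$ and $Q_{c,s}(T)S_R^{-1}(s,\overline{T})=s\mathcal{I}-T$, the right-hand side collapses to $(p\mathcal{I}-\overline{T})+(s\mathcal{I}-T)=(s+p)\mathcal{I}-2T_0$, so \eqref{Eq_Q_resolvent_equation1} becomes equivalent to the operator identity
\begin{equation*}
S_L^{-1}(p,s)Q_{c,p}(T)+Q_{c,s}(T)S_R^{-1}(s,p)=(s+p)\mathcal{I}-2T_0.
\end{equation*}
Expanding $Q_{c,p}(T)=p^2\mathcal{I}-2pT_0+|T|^2$ and $Q_{c,s}(T)=s^2\mathcal{I}-2sT_0+|T|^2$ and comparing the contributions of $\mathcal{I}$, $T_0$, and $|T|^2$ (permissible since $T_0$ and $|T|^2$ are two-sided linear and commute with scalar quaternions), the task reduces to three scalar identities in $s,p$:
\begin{align*}
&\text{(A)} \quad (p-\overline{s})\mathcal{Q}_s(p)^{-1}p^2+s^2\mathcal{Q}_p(s)^{-1}(s-\overline{p})=s+p, \\
&\text{(B)} \quad (p-\overline{s})\mathcal{Q}_s(p)^{-1}p+s\mathcal{Q}_p(s)^{-1}(s-\overline{p})=1, \\
&\text{(C)} \quad (p-\overline{s})\mathcal{Q}_s(p)^{-1}+\mathcal{Q}_p(s)^{-1}(s-\overline{p})=0,
\end{align*}
where $\mathcal{Q}_s(p):=p^2-2s_0p+|s|^2$ and $\mathcal{Q}_p(s):=s^2-2p_0s+|p|^2$.

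The main obstacle is verifying (A), (B), (C) in the noncommutative setting of $\mathbb{H}$. Identity (C), after multiplying by $\mathcal{Q}_p(s)$ on the left and $\mathcal{Q}_s(p)$ on the right, reduces to $\mathcal{Q}_p(s)(p-\overline{s})+(s-\overline{p})\mathcal{Q}_s(p)=0$, which unfolds by direct expansion using $s\overline{s}=|s|^2$, $p\overline{p}=|p|^2$, and $\overline{s}=2s_0-s$, $\overline{p}=2p_0-p$, together with the fact that the real scalars $s_0,|s|^2,p_0,|p|^2$ commute with every quaternion. Identities (B) and (A) then follow from (C) combined with the relation $\mathcal{Q}_p(p)=0$ (each quaternion satisfies its own characteristic equation within its slice), which produces the required cancellations after reorganizing the polynomial expressions in $s$ and $p$.
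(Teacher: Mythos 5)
Your proposal is correct, but it is organized somewhat differently from the paper's argument. The paper proves \eqref{Eq_Q_resolvent_equation2} by showing the expression $Q_{c,s}^{-1}(T)S_L^{-1}(p,T)+S_R^{-1}(s,\overline{T})Q_{c,p}^{-1}(T)$ equals $Q_{c,s}^{-1}(T)(p+s-2T_0)Q_{c,p}^{-1}(T)$, which is visibly invariant under $T\leftrightarrow\overline{T}$; your proof instead computes the two differences $S_L^{-1}(p,T)-S_L^{-1}(p,\overline{T})=(T-\overline{T})Q_{c,p}^{-1}(T)$ and $S_R^{-1}(s,T)-S_R^{-1}(s,\overline{T})=Q_{c,s}^{-1}(T)(T-\overline{T})$ and subtracts. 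That works, but your remark ``thanks to the pairwise commutativity of all factors'' is superfluous and, read literally, wrong: the operator $T-\overline{T}=2(e_1T_1+e_2T_2+e_3T_3)$ is not two-sided linear, so it does not commute with $Q_{c,s}^{-1}(T)$ (which carries the scalar $s$). Luckily you never need this: the two terms you subtract are already the same string of operators, so they cancel by associativity alone. For \eqref{Eq_Q_resolvent_equation1}, the paper computes both $Q_{c,s}^{-1}(T)S_L^{-1}(p,s)+S_R^{-1}(s,p)Q_{c,p}^{-1}(T)$ and the $T$-side directly and shows both collapse to the same middle expression $Q_{c,s}^{-1}(T)(p+s-2T_0)Q_{c,p}^{-1}(T)$; you accomplish the same reduction by pre- and post-multiplying with $Q_{c,s}(T)$ and $Q_{c,p}(T)$ (which is reversible because $s,p\in\rho_S(T)$), and then split the resulting operator identity into the three scalar identities (A), (B), (C). This is legitimate and arguably more explicit, as it isolates exactly where the noncommutative cancellations happen. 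Two small inaccuracies worth fixing: (i) as noted, drop the commutativity remark in the first part; (ii) in verifying (A) the characteristic equation you actually invoke is $\mathcal{Q}_s(s)=s^2-2s_0s+|s|^2=0$ (not $\mathcal{Q}_p(p)=0$), and (B) needs no characteristic equation at all, only that $p$ commutes with $\mathcal{Q}_s(p)^{-1}$. With those corrections the proof is complete and equivalent in substance to the paper's: your equations (A)--(C) are precisely the expansion of the paper's line that produces $Q_{c,s}^{-1}(T)(p+s-2T_0)Q_{c,p}^{-1}(T)$, broken into the $\mathcal{I}$, $T_0$, and $|T|^2$ components.
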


\begin{proof}
Using the left $S$-resolvent in \eqref{Eq_S_resolvent} and the representation \eqref{Eq_S_resolvent_on_domT} of the right resolvent, we obtain
\begin{align}
Q_{c,s}^{-1}(T)S_L^{-1}&(p,T)+S_R^{-1}(s,\overline{T})Q_{c,p}^{-1}(T) \notag \\
&=Q_{c,s}^{-1}(T)\big(S_L^{-1}(p,T)Q_{c,p}(T)+Q_{c,s}(T)S_R^{-1}(s,\overline{T})\big)Q_{c,p}^{-1}(T) \notag \\
&=Q_{c,s}^{-1}(T)\big((p-\overline{T})Q_{c,p}^{-1}(T)Q_{c,p}(T)+Q_{c,s}(T)Q_{c,s}^{-1}(T)(s-T)\big)Q_{c,p}^{-1}(T) \notag \\
&=Q_{c,s}^{-1}(T)(p-2T_0+s)Q_{c,p}^{-1}(T).\label{Eq_Q_resolvent_equation_1}
\end{align}
Since the right hand side of this equation does not change when we replace $T\rightarrow\overline{T}$, the equation \eqref{Eq_Q_resolvent_equation2} is proven. Moreover, using \eqref{Eq_S_resolvent} for $S_L^{-1}(p,s)$ and \eqref{Eq_S_resolvent_noncommuting} for $S_R^{-1}(s,p)$, analogously gives
\begin{align}
Q_{c,s}^{-1}(T)S_L^{-1}&(p,s)+S_R^{-1}(s,p)Q_{c,p}^{-1}(T) \notag \\
&=Q_{c,s}^{-1}(T)\big(S_L^{-1}(p,s)Q_{c,p}(T)+Q_{c,s}(T)S_R^{-1}(s,p)\big)Q_{c,p}^{-1}(T) \notag \\
&=Q_{c,s}^{-1}(T)\big((p-\overline{s})Q_{c,p}(T)+Q_{c,s}(T)(\overline{s}-p)\big)(p^2-2s_0p+|s|^2)^{-1}Q_{c,p}^{-1}(T) \notag \\
&=Q_{c,s}^{-1}(T)(p-2T_0+s)Q_{c,p}^{-1}(T). \label{Eq_Q_resolvent_equation_2}
\end{align}
Comparison of \eqref{Eq_Q_resolvent_equation_1} and \eqref{Eq_Q_resolvent_equation_2} gives the resolvent identity \eqref{Eq_Q_resolvent_equation1}.
\end{proof}

With the resolvent equation \eqref{Eq_Q_resolvent_equation}, we are now ready to prove the product rule of the harmonic functional calculus.

\begin{thm}\label{thm_Product_rule_decaying}
Let $T\in\mathcal{KC}(V)$ with $T,\overline{T}$ being operators of type $\omega$ and $\theta\in(\omega,\pi)$. Then for every $f\in\Psi^Q(S_\theta)$, $g\in\Psi_L^Q(S_\theta)$ with $\theta\in(\omega,\pi)$ we obtain the product rule
\begin{subequations}\label{Eq_Product_rule_left_decaying}
\begin{align}
D(fg)(T)&=Df(T)g(T)+f(\overline{T})Dg(T) \label{Eq_Product_rule_left_decaying1} \\
&=Df(T)g(\overline{T})+f(T)Dg(T). \label{Eq_Product_rule_left_decaying2}
\end{align}
\end{subequations}
The same product rule also holds for $f\in\Psi^Q_R(S_\theta)$ and $g\in\Psi^Q(S_\theta)$ using the respective functional calculi \eqref{Eq_S_functional_calculus_decaying_right} and \eqref{Eq_Q_functional_calculus_decaying_right}.
\end{thm}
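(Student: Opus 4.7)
We reduce the identity to the $Q$-resolvent equation \eqref{Eq_Q_resolvent_equation}. Choose $\omega<\varphi_1<\varphi_2<\theta$ and $I,J\in\mathbb{S}$, and set $\Gamma_1:=\partial(S_{\varphi_1}\cap\mathbb{C}_I)$, $\Gamma_2:=\partial(S_{\varphi_2}\cap\mathbb{C}_J)$. By Theorem~\ref{thm_Q_functional_calculus_decaying} and the analogous parameter-independence of the $S$-functional calculus this is harmless, and the strict inequality $\varphi_1<\varphi_2$ guarantees $s\notin[p]$ for all $p\in\Gamma_1$, $s\in\Gamma_2$, so that \eqref{Eq_Q_resolvent_equation} is applicable. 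Represent $Df(T)$ by \eqref{Eq_Q_functional_calculus_decaying} on $\Gamma_1$ in the variable $p$, and $f(\overline{T})$ by the right-sided $S$-calculus formula $f(\overline{T})=\frac{1}{2\pi}\int_{\Gamma_1}f(p)\,dp_I\,S_R^{-1}(p,\overline{T})$ on $\Gamma_1$ (available because $f$ is intrinsic); analogously represent $g(T)$ and $Dg(T)$ on the outer contour $\Gamma_2$ in the variable $s$. The $\Psi^Q$-decay of $f,g$ combined with \eqref{Eq_S_resolvent_estimates} and \eqref{Eq_Q_resolvent_estimate} yields absolute convergence, so Fubini gives
\begin{equation*}
Df(T)g(T)+f(\overline{T})Dg(T)=\frac{-1}{2\pi^2}\int_{\Gamma_1}\int_{\Gamma_2}\Bigl(Q_{c,p}^{-1}(T)\,dp_I\,f(p)\,S_L^{-1}(s,T)+f(p)\,dp_I\,S_R^{-1}(p,\overline{T})\,Q_{c,s}^{-1}(T)\Bigr)ds_J\,g(s).
\end{equation*}

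Now apply the variable-swapped form of \eqref{Eq_Q_resolvent_equation1},
\begin{equation*}
Q_{c,p}^{-1}(T)S_L^{-1}(s,T)+S_R^{-1}(p,\overline{T})Q_{c,s}^{-1}(T)=Q_{c,p}^{-1}(T)S_L^{-1}(s,p)+S_R^{-1}(p,s)Q_{c,s}^{-1}(T),
\end{equation*}
to replace the $T$-dependent operator kernel above by a purely scalar kernel. The resulting double integral splits into two pieces. In the first, containing $Q_{c,p}^{-1}(T)S_L^{-1}(s,p)$, the inner $s$-integration collapses via the left Cauchy formula \eqref{Eq_Cauchy_formula} for $g\in\mathcal{SH}_L(S_\theta)$: since $p\in\Gamma_1\subset S_{\varphi_2}$, we obtain $\frac{1}{2\pi}\int_{\Gamma_2}S_L^{-1}(s,p)\,ds_J\,g(s)=g(p)$, leaving precisely $\frac{-1}{\pi}\int_{\Gamma_1}Q_{c,p}^{-1}(T)\,dp_I\,f(p)g(p)$, which equals $D(fg)(T)$ by Definition~\ref{defi_Q_functional_calculus_decaying} and the already-established contour independence. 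In the second piece, containing $S_R^{-1}(p,s)Q_{c,s}^{-1}(T)$, the inner $p$-integration vanishes by the right-sided Cauchy formula applied to the intrinsic $f$: since $s\in\Gamma_2$ lies outside $\overline{S_{\varphi_1}}$, $\frac{1}{2\pi}\int_{\Gamma_1}f(p)\,dp_I\,S_R^{-1}(p,s)=0$. Combining the two pieces proves \eqref{Eq_Product_rule_left_decaying1}, and \eqref{Eq_Product_rule_left_decaying2} follows by the same argument starting from $Df(T)g(\overline{T})+f(T)Dg(T)$ and invoking \eqref{Eq_Q_resolvent_equation2} in place of \eqref{Eq_Q_resolvent_equation1}.

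The main obstacle lies in the careful bookkeeping of non-commuting quaternionic factors inside the double integrals — in particular, verifying that the intrinsic scalar $f(p)\in\mathbb{C}_I$ and the $\mathbb{C}_I$-valued differential $dp_I$ can be legitimately permuted with respect to the various resolvent operators (this rests on the $\mathbb{C}_I$-linearity of $Q_{c,p}^{-1}(T)$, which in turn follows from the commuting-components structure of $T$), and ensuring that each application of Cauchy's formula is to the correct argument of $S_L^{-1}$ or $S_R^{-1}$ with the inside-vs.-outside-contour configuration of its variable arranged correctly.
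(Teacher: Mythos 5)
Your strategy is essentially the same as the paper's: a double contour integral via Fubini, the $Q$-resolvent equation \eqref{Eq_Q_resolvent_equation}, and a Cauchy-formula collapse (one piece yielding $D(fg)(T)$, the other vanishing). Swapping the roles of the contours (you put $f$ on the inner contour, $g$ on the outer; the paper does the reverse) is harmless by symmetry.

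There is, however, a genuine bookkeeping error in your displayed double integral. You represent $Df(T)$ by the \emph{left-sided} form \eqref{Eq_Q_functional_calculus_decaying}, so the $p$-scalars $dp_I\,f(p)$ sit to the right of $Q_{c,p}^{-1}(T)$; but you represent $f(\overline{T})$ by the \emph{right-sided} $S$-formula, so there $f(p)\,dp_I$ sits to the left of $S_R^{-1}(p,\overline{T})$. Composing $Df(T)\circ g(T)$ thus produces an integrand $Q_{c,p}^{-1}(T)\,S_L^{-1}(s,T)\,ds_J\,g(s)\,dp_I\,f(p)$ with $dp_I f(p)$ acting as a \emph{right} multiplication at the far right, whereas $f(\overline{T})\circ Dg(T)$ produces $f(p)\,dp_I\,S_R^{-1}(p,\overline{T})\,Q_{c,s}^{-1}(T)\,ds_J\,g(s)$ with $f(p)\,dp_I$ acting as a \emph{left} multiplication at the far left. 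Pulling a common factor $f(p)\,dp_I$ out of both would require commuting it past $ds_J\,g(s)$, which is not in $\mathbb{C}_I$ since $g$ is not intrinsic — so the integrand you wrote, $Q_{c,p}^{-1}(T)\,dp_I\,f(p)\,S_L^{-1}(s,T)\,ds_J\,g(s)$, is not actually the integrand arising from $Df(T)g(T)$, and the resolvent equation cannot be applied as stated. The $\mathbb{C}_I$-linearity of $Q_{c,p}^{-1}(T)$ that you correctly identify is exactly what's needed, but it must be used \emph{before} forming the double integral, to pass from the left-sided to the right-sided representation $Df(T)=(fD)(T)=\frac{-1}{\pi}\int f(p)\,dp_I\,Q_{c,p}^{-1}(T)$ (valid precisely because $f$ is intrinsic). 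With that substitution both $f$-factors sit on the left of their respective operators, the scalars stay at the boundary of the double-integral integrand, the $Q$-resolvent equation applies cleanly, and the rest of your argument goes through; this is exactly what the paper does by consistently working with the right-sided representations of $Df(T)$ and $f(\overline{T})$.
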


\begin{proof}
We only prove the product rule \eqref{Eq_Product_rule_left_decaying1}, since \eqref{Eq_Product_rule_left_decaying2} follows from \eqref{Eq_Product_rule_left_decaying1} when we replace $T$ by $\overline{T}$ and using Lemma~\ref{lem_Properties_Q_functional_calculus_decaying}~iii).

\medskip

Since $f\in\Psi^Q(S_\theta)$ is intrinsic and $g\in\Psi_L^Q(S_\theta)$ is left slice hyperholomorphic, the product $fg$ is also left slice hyperholomorphic and satisfies the needed estimates in order to be in the space $fg\in\Psi_L(S_\theta)$. Consider now two angles $\varphi_2<\varphi_1\in(\omega,\theta)$ and imaginary units $J,I\in\mathbb{S}$. Then, using the functional calculi \eqref{Eq_S_functional_calculus_decaying} and \eqref{Eq_Q_functional_calculus_decaying}, we have
\begin{align}
Df&(T)g(T)+f(\overline{T})Dg(T) \notag \\
&=\frac{-1}{2\pi^2}\bigg(\int_{\partial(S_{\varphi_1}\cap\mathbb{C}_J)}f(s)ds_JQ_{c,s}^{-1}(T)\int_{\partial(S_{\varphi_2}\cap\mathbb{C}_I)}S_L^{-1}(p,T)dp_Ig(p) \notag \\
&\hspace{2cm}+\int_{\partial(S_{\varphi_1}\cap\mathbb{C}_J)}f(s)ds_JS_R^{-1}(s,\overline{T})\int_{\partial(S_{\varphi_2}\cap\mathbb{C}_I)}Q_{c,p}^{-1}(T)dp_Ig(p)\bigg) \notag \\
&=\frac{-1}{2\pi^2}\int_{\partial(S_{\varphi_1}\cap\mathbb{C}_J)}f(s)ds_J\int_{\partial(S_{\varphi_2}\cap\mathbb{C}_I)}\Big(Q_{c,s}^{-1}(T)S_L^{-1}(p,T)+S_R^{-1}(s,\overline{T})Q_{c,p}^{-1}(T)\Big)dp_Ig(p) \notag \\
&=\frac{-1}{2\pi^2}\int_{\partial(S_{\varphi_1}\cap\mathbb{C}_J)}f(s)ds_J\int_{\partial(S_{\varphi_2}\cap\mathbb{C}_I)}\Big(Q_{c,s}^{-1}(T)S_L^{-1}(p,s)+S_R^{-1}(s,p)Q_{c,p}^{-1}(T)\Big)dp_Ig(p), \label{Eq_Product_rule_decaying_1}
\end{align}
where in the last line we used the resolvent equation \eqref{Eq_Q_resolvent_equation1}. Since we chose $\varphi_2<\varphi_1$, every point $s\in\partial(S_{\varphi_1}\cap\mathbb{C}_J)$ lies outside $\overline{S_{\varphi_2}}$ and hence we get
\begin{equation}\label{Eq_Product_rule_decaying_2}
\int_{\partial(S_{\varphi_2}\cap\mathbb{C}_I)}S_L^{-1}(p,s)dp_Ig(p)=0,\qquad s\in\partial(S_{\varphi_1}\cap\mathbb{C}_J),
\end{equation}
where the integral along $\infty$, which closes the set on the right, vanishes due to the asymptotics
\begin{equation*}
\big|S_L^{-1}(p,s)g(p)\big|\leq\frac{C_\beta|p-\overline{s}||p|^{1+\beta}}{|p^2-2s_0p+|s|^2|(1+|p|^{1+2\beta})}=\mathcal{O}\Big(\frac{1}{|p|^{1+\beta}}\Big),\qquad\text{as }p\rightarrow\infty.
\end{equation*}
Analogously, every point $p\in\partial(S_{\varphi_2}\cap\mathbb{C}_I)$ is contained in $S_{\varphi_1}$, which gives
\begin{equation}\label{Eq_Product_rule_decaying_3}
\frac{1}{2\pi}\int_{\partial(S_{\varphi_1}\cap\mathbb{C}_J)}f(s)ds_JS_R^{-1}(s,p)=f(p),\qquad p\in\partial(S_{\varphi_2}\cap\mathbb{C}_I).
\end{equation}
Plugging now \eqref{Eq_Product_rule_decaying_2} and \eqref{Eq_Product_rule_decaying_3} into \eqref{Eq_Product_rule_decaying_1} reduces the double integral to
\begin{align*}
Df(T)g(T)+f(\overline{T})Dg(T)&=\frac{-1}{\pi}\int_{\partial(S_{\varphi_2}\cap\mathbb{C}_I)}f(p)Q_{c,p}^{-1}(T)dp_Ig(p), \\
&=\frac{-1}{\pi}\int_{\partial(S_{\varphi_2}\cap\mathbb{C}_I)}Q_{c,p}^{-1}(T)dp_If(p)g(p)=D(fg)(T),
\end{align*}
where in the second line we used that $f$ is intrinsic and interchanges with $Q_{c,p}^{-1}(T)dp_J$.
\end{proof}

In the last part of this section we investigate how the harmonic functional calculus acts on intrinsic rational functions, i.e. on functions of the form $f(s)=\frac{p(s)}{q(s)}$, consisting of polynomials $p,q$ with real valued coefficients. To do so, we consider for every polynomial $q(s)=\sum_{j=0}^mb_js^j$ and every right-linear closed operator $T$ the \textit{polynomial functional calculus}
\begin{equation}\label{Eq_Polynomial_functional_calculus}
q[T]:=\sum\limits_{j=0}^mq_jT^j,
\end{equation}
with $\dom(q[T]):=\dom(T^m)$ and start with a lemma, ensuring its invertibility.

\begin{lem}\label{lem_Polynomial_bijective}
Let $T:V\rightarrow V$ be a right-linear closed operator with a two-sided linear domain and $q\not\equiv 0$ an intrinsic polynomial. If $q$ does not admit any zeros in $\sigma_S (T)$, then $q[T]$ is bijective.
\end{lem}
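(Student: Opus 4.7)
The plan is to exploit that the intrinsic polynomial $q$ has \emph{real} coefficients, and so factors over $\mathbb{R}$ into linear and irreducible quadratic pieces; each piece will correspond to a bijective closed operator controlled by $\sigma_S(T)$. By the fundamental theorem of algebra applied over $\mathbb{R}$, write
\begin{equation*}
q(s) = b_m \prod_{k=1}^r (s - a_k) \prod_{\ell=1}^t \bigl( s^2 - 2\alpha_\ell s + \alpha_\ell^2 + \beta_\ell^2\bigr),
\end{equation*}
where $b_m$ is the leading coefficient, $a_k\in\mathbb{R}$ are the real zeros, $\alpha_\ell\pm i\beta_\ell$ (with $\beta_\ell>0$) are the non-real complex conjugate pairs of zeros, and $r+2t=m$. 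Since polynomial operations commute with evaluation in a single closed operator on $\dom(T^m)$, this transfers to the operator factorization
\begin{equation*}
q[T] = b_m \prod_{k=1}^r (T - a_k\mathcal{I}) \prod_{\ell=1}^t \bigl(T^2 - 2\alpha_\ell T + (\alpha_\ell^2+\beta_\ell^2)\mathcal{I}\bigr),
\end{equation*}
in which all factors pairwise commute as polynomials in $T$.

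Next I would identify each factor with an operator $Q_s(T)$ from the Remark in Section~\ref{sec_Preliminary_results_on_quaternionic_function_theory}. For any real root $a_k$ one has $(T-a_k\mathcal{I})^2 = Q_{a_k}(T)$, while each quadratic factor equals $Q_{c_\ell}(T)$ for any quaternion $c_\ell$ on the $2$-sphere with $\Re(c_\ell)=\alpha_\ell$ and $|c_\ell|^2=\alpha_\ell^2+\beta_\ell^2$ — recall that $Q_s(T)$ depends only on $s_0$ and $|s|$. The zero set of $q$ in $\mathbb{H}$ is thus precisely $\{a_1,\dots,a_r\}\cup\bigcup_{\ell=1}^t [c_\ell]$, and by hypothesis none of these points lies in $\sigma_S(T)$. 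Hence each $Q_{c_\ell}(T)$ is bijective by the very definition of $\rho_S(T)$, and each $Q_{a_k}(T)=(T-a_k\mathcal{I})^2$ is bijective. A short argument then shows that if $A$ is a closed operator and $A^2$ is bijective, then $A$ itself is bijective: injectivity of $A^2$ immediately forces injectivity of $A$, while for surjectivity one takes $x=A(A^2)^{-1}y$ for given $y\in V$, which lies in $\dom(A)$ because $(A^2)^{-1}y\in\dom(A^2)\subseteq\dom(A)$. Applied with $A=T-a_k\mathcal{I}$, this gives bijectivity of each linear factor.

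In the final step I would assemble these commuting bijective factors into the product $q[T]$ on $\dom(T^m)$. The main obstacle is the domain bookkeeping for unbounded $T$: one must verify that the (bounded, by the closed graph theorem) inverse of any such factor $P(T)$ maps $\dom(T^n)$ into $\dom(T^{n+\deg P})$, which follows by commuting $P(T)^{-1}$ with powers of $T$ starting from the identity $T^n P(T) = P(T) T^n$ on $\dom(T^{n+\deg P})$. Granting this, injectivity of $q[T]$ follows by peeling off one factor at a time, while surjectivity is proved by inverting the factors in succession — always landing back inside $\dom(T^m)$ by the domain argument just sketched. This shows $q[T]:\dom(T^m)\to V$ is bijective.
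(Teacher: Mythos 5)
Your proposal is correct, but it takes a genuinely different route from the paper. You factor the intrinsic polynomial $q$ over $\mathbb{R}$ into linear and irreducible quadratic pieces, identify each real-root factor with $(T-a_k\mathcal{I})$ via $Q_{a_k}(T)=(T-a_k\mathcal{I})^2$ and each quadratic factor with $Q_{c_\ell}(T)$ for $c_\ell$ on the associated $2$-sphere, and then invoke the definition of $\rho_S(T)$ from the Remark in Section~\ref{sec_Preliminary_results_on_quaternionic_function_theory} (which, correctly, is the one valid for general closed $T$, not just $T\in\mathcal{KC}(V)$) to conclude bijectivity of each commuting factor, finishing with the domain bookkeeping needed to assemble the composition on $\dom(T^m)$. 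This is an elementary, purely algebraic argument, and it does work: the reduction $A^2$ bijective $\Rightarrow$ $A$ bijective is sound, and the induction showing $P(T)^{-1}[\dom(T^n)]\subseteq\dom(T^{n+\deg P})$ closes the domain gap. The paper instead constructs the inverse directly as the Cauchy-type integral $B=\frac{-1}{2\pi}\int_{\partial(U\cap\mathbb{C}_J)}S_L^{-1}(s,T)\,ds_J\,\frac{1}{q(s)}$ over a slice Cauchy domain $U\subseteq\rho_S(T)$ containing the zeros of $q$, and verifies $q[T]B=Bq[T]=\mathcal{I}$ by an induction on $T^jB$ using the identity $S_L^{-1}(s,T)s=TS_L^{-1}(s,T)+\mathcal{I}$. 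The trade-off is clear: your approach is shorter and avoids the $S$-functional calculus entirely, whereas the paper's approach has the advantage of producing the explicit integral formula $q[T]^{-1}=B$ and the auxiliary identity \eqref{Eq_Polynomial_bijective_2}, both of which are used verbatim in the proof of Proposition~\ref{prop_Rational_equivalence_decaying}; if you adopt the factorization route you would need to re-derive those ingredients separately.
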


\begin{proof}
For the constant polynomial $q(s)=q_0$, we have $q_0\neq 0$ by the assumption $q\not\equiv 0$. Hence the operator $q[T]=q_0$ is bijective. Let us now consider $\deg(q)=m\geq 1$ and choose some arbitrary bounded slice Cauchy domain $U\subseteq\rho_S(T)$, containing all the zeros of $q$. Then define for some fixed $J\in\mathbb{S}$ the operator
\begin{equation}\label{Eq_Polynomial_bijective_1}
B:=\frac{-1}{2\pi}\int_{\partial(U\cap\mathbb{C}_J)}S_L^{-1}(s,T)ds_J\frac{1}{q(s)}.
\end{equation}
Then we  will prove by induction that $\ran(B)\subseteq\dom(T^j)$ and
\begin{equation}\label{Eq_Polynomial_bijective_2}
T^jB=\frac{-1}{2\pi}\int_{\partial(U\cap\mathbb{C}_J)}S_L^{-1}(s,T)ds_J\frac{s^j}{q(s)},\qquad j\in\{0,\dots,m-1\}.
\end{equation}
The induction start $j=0$ is clear by definition \eqref{Eq_Polynomial_bijective_1}. For the induction step $j-1\rightarrow j$ we use the identity
\begin{equation}\label{Eq_Polynomial_bijective_5}
S_L^{-1}(s,T)s=TS_L^{-1}(s,T)+\mathcal{I},
\end{equation}
see \cite[Theorem 2.33]{G17}, to get
\begin{equation}\label{Eq_Polynomial_bijective_3}
\int_{\partial(U\cap\mathbb{C}_J)}S_L^{-1}(s,T)ds_J\frac{s^j}{q(s)}=\int_{\partial(U\cap\mathbb{C}_J)}TS_L^{-1}(s,T)ds_J\frac{s^{j-1}}{q(s)}+\int_{\partial(U\cap\mathbb{C}_J)}ds_J\frac{s^{j-1}}{q(s)}.
\end{equation}
Since the last integral does no longer contain the $S$-resolvent operator $S_L^{-1}(s,T)$, it is holomorphic everywhere except the zeros of the denominator $q(s)$, which all lie inside $U$. Hence it is allowed by the Cauchy theorem to replace $U$ by a ball $B_{0,R}$ centered at the origin and with large enough radius $R$ such that it contains all zeros of $q$. This then leads to
\begin{equation}\label{Eq_Polynomial_bijective_6}
\int_{\partial(U\cap\mathbb{C}_J)}ds_J\frac{s^{j-1}}{q(s)}=\int_{\partial(B_{0,R}\cap\mathbb{C}_J)}ds_J\frac{s^{j-1}}{q(s)}\overset{R\rightarrow\infty}{\longrightarrow}0,
\end{equation}
where the right hand side vanishes in the limit $R\rightarrow\infty$ due to the asymptotics $\frac{s^{j-1}}{q(s)}=\mathcal{O}(|s|^{j-1-m})$ and $j\leq m-1$. Hence the left hand side, which does not depend on the radius $R$, has to vanish identically. This reduces equation \eqref{Eq_Polynomial_bijective_3} to
\begin{align*}
\int_{\partial(U\cap\mathbb{C}_J)}S_L^{-1}(s,T)ds_J\frac{s^j}{q(s)}&=\int_{\partial(U\cap\mathbb{C}_J)}TS_L^{-1}(s,T)ds_J\frac{s^{j-1}}{q(s)} \\
&=T\int_{\partial(U\cap\mathbb{C}_J)}S_L^{-1}(s,T)ds_J\frac{s^{j-1}}{q(s)}=-2\pi T^jB,
\end{align*}
where in the second equation we used Hille's theorem \cite[Theorem II.2.6]{DU77} and in the third equation the induction assumption \eqref{Eq_Polynomial_bijective_2} for $j-1$. Hence \eqref{Eq_Polynomial_bijective_2} is proven.

\medskip

Next, we once more use the resolvent identity \eqref{Eq_Polynomial_bijective_5} to write
\begin{align*}
S_L^{-1}(s,T)q(s)&=b_0S_L^{-1}(s,T)+\sum\limits_{j=1}^mb_j(TS_L^{-1}(s,T)+\mathcal{I})s^{j-1} \\
&=b_0S_L^{-1}(s,T)+T\sum\limits_{j=1}^mb_jS_L^{-1}(s,T)s^{j-1}+\frac{q(s)-b_0}{s}.
\end{align*}
By the Cauchy theorem and the already proven identity \eqref{Eq_Polynomial_bijective_2}, we then have
\begin{align}
0&=\frac{-1}{2\pi}\int_{\partial(U\cap\mathbb{C}_J)}S_L^{-1}(s,T)ds_J1=\frac{-1}{2\pi}\int_{\partial(U\cap\mathbb{C}_J)}S_L^{-1}(s,T)ds_J\frac{q(s)}{q(s)} \notag \\
&=b_0B+T\sum\limits_{j=1}^mb_jT^{j-1}B-\frac{1}{2\pi}\int_{\partial(U\cap\mathbb{C}_J)}ds_J\frac{q(s)-b_0}{sq(s)} \notag \\
&=q[T]B-\frac{1}{2\pi}\int_{\partial(U\cap\mathbb{C}_J)}ds_J\frac{q(s)-b_0}{sq(s)}. \label{Eq_Polynomial_bijective_7}
\end{align}
In the same way as in \eqref{Eq_Polynomial_bijective_6}, we are allowed to exchange $U$ by a ball $B_{0,R}$ centered at the origin and with a large enough radius to contain all zeros of $q$. Here it is important that $s=0$ is not an additional singularity of the integrand since it is also a zero of the nominator. This then leads to the integral
\begin{align*}
\frac{1}{2\pi}\int_{\partial(U\cap\mathbb{C}_J)}ds_J\frac{q(s)-b_0}{sq(s)}&=\frac{1}{2\pi}\int_{\partial(B_{0,R}\cap\mathbb{C}_J)}ds_J\frac{q(s)-b_0}{sq(s)} \\
&=1-\frac{b_0}{2\pi}\int_{\partial(B_{0,R}\cap\mathbb{C}_J)}ds_J\frac{1}{sq(s)}\overset{R\rightarrow\infty}{\longrightarrow}1,
\end{align*}
where we used that the last integral vanishes in the limit $R\rightarrow\infty$ since $\frac{1}{sq(s)}=\mathcal{O}(s^{-1-m})$ and $m\geq 1$. Plugging this into \eqref{Eq_Polynomial_bijective_7} gives $q[T]B=1$. Moreover, the operator $B$ commutes with $T$ on $\dom(T)$, because
\begin{align*}
TB&=\frac{-1}{2\pi}\int_{\partial(U\cap\mathbb{C}_J)}TS_L^{-1}(s,T)ds_J\frac{1}{q(s)}=\frac{-1}{2\pi}\int_{\partial(U\cap\mathbb{C}_J)}(S_L^{-1}(s,T)s-\mathcal{I})ds_J\frac{1}{q(s)} \\
&=\frac{-1}{2\pi}\int_{\partial(U\cap\mathbb{C}_J)}\frac{1}{q(s)}ds_J(sS_R^{-1}(s,T)-\mathcal{I})=\frac{-1}{2\pi}\int_{\partial(U\cap\mathbb{C}_J)}\frac{1}{q(s)}ds_JS_R^{-1}(s,T)T \\
&=\frac{-1}{2\pi}\int_{\partial(U\cap\mathbb{C}_J)}S_L^{-1}(s,T)ds_J\frac{1}{q(s)}T=BT,
\end{align*}
where we used the resolvent equation $sS_R^{-1}(s,T)-\mathcal{I}=S_R^{-1}(s,T)T$ from \cite[Theorem 2.33]{G17}, as well as the fact that for any intrinsic function $g$ the two integrals
\begin{equation*}
\int_{\partial(U\cap\mathbb{C}_J)}S_L^{-1}(s,T)ds_Jg(s)=\int_{\partial(U\cap\mathbb{C}_J)}g(s)ds_JS_R^{-1}(s,T)
\end{equation*}
coincide. With this commutativity we can rewrite the already proven $q[T]B=1$ also as $Bq[T]=1$ on $\dom(T^m)$, which proves that $q[T]$ is bijective with inverse
\begin{equation}\label{Eq_Polynomial_bijective_4}
q[T]^{-1}=B=\frac{-1}{2\pi}\int_{\partial(U\cap\mathbb{C}_J)}S_L^{-1}(s,T)ds_J\frac{1}{q(s)}. \qedhere
\end{equation}
\end{proof}

\begin{prop}\label{prop_Rational_equivalence_decaying}
Let $T\in\mathcal{KC}(V)$ with $T,\overline{T}$ being operators of type $\omega$ and consider two intrinsic polynomials $p,q$ with the properties

\begin{enumerate}
\item[i)] $\deg(q)\geq\deg(p)+1$;
\item[ii)] $p$ admits a zero of at least order $2$ at the origin;
\item[iii)] $q$ does not admit any zeros in $\overline{S_\omega}$.
\end{enumerate}

Then $q[T]$, $q[\overline{T}]$ are bijective, $\frac{p}{q}\in\Psi^Q(S_\theta)$ for some $\theta\in(\omega,\pi)$ and
\begin{subequations}\label{Eq_Rational_equivalence_decaying}
\begin{align}
D\Big(\frac{p}{q}\Big)(T)&=\big(Dp[T]q[T]-p[T]Dq[T]\big)q[T]^{-1}q[\overline{T}]^{-1} \label{Eq_Rational_equivalence_decaying1} \\
&=\big(Dp[T]q[\overline{T}]-p[\overline{T}]Dq[T]\big)q[T]^{-1}q[\overline{T}]^{-1}. \label{Eq_Rational_equivalence_decaying2}
\end{align}
\end{subequations}
\end{prop}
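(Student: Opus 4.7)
The argument naturally splits into verifying the preliminary assertions and then establishing the main quotient-rule identities. For the preliminaries, observe that $Q_{c,s}(T)=Q_{c,s}(\overline{T})$ since only $T_0$ and $|T|^2$ appear in its definition, so the two $S$-spectra coincide and both lie in $\overline{S_\omega}$. By hypothesis iii), $q$ has no zeros there, and Lemma \ref{lem_Polynomial_bijective} yields that $q[T]$ and $q[\overline{T}]$ are bijective. Next, by continuity $q$ has no zeros on $\overline{S_\theta}$ for some $\theta\in(\omega,\pi)$, so $p/q$ is a well defined, intrinsic, slice hyperholomorphic function on $S_\theta$. Condition ii) together with $q(0)\neq 0$ forces $|p(s)/q(s)|\leq C|s|^2$ near $0$, while condition i) gives $|p(s)/q(s)|\leq C/|s|$ at infinity. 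Both estimates are absorbed by the decay profile of $\Psi^Q(S_\theta)$ with $\alpha=1$, so $p/q\in\Psi^Q(S_\theta)$.

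The main identities \eqref{Eq_Rational_equivalence_decaying} constitute a form of noncommutative quotient rule. The plan is to apply the product rule (Theorem \ref{thm_Product_rule_decaying}) to the factorization $p=(p/q)\cdot q$, which formally gives
\begin{equation*}
Dp[T]=D(p/q)(T)\,q[T]+(p/q)(\overline{T})\,Dq[T]=D(p/q)(T)\,q[\overline{T}]+(p/q)(T)\,Dq[T].
\end{equation*}
Solving for $D(p/q)(T)$ in the first equality, substituting $(p/q)(\overline{T})=p[\overline{T}]\,q[\overline{T}]^{-1}$, and multiplying by $q[\overline{T}]$ on the right (using commutativity of $Dq[T]$ with $q[\overline{T}]^{-1}$ that follows from Corollary \ref{cor_Commutation_g_with_Df} applied after regularization) yields \eqref{Eq_Rational_equivalence_decaying2}; the identity \eqref{Eq_Rational_equivalence_decaying1} follows analogously from the second version of the product rule.

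The key obstacle is that Theorem \ref{thm_Product_rule_decaying} requires $g\in\Psi_L^Q(S_\theta)$, whereas here $g=q$ is merely an intrinsic polynomial. To justify the above reasoning rigorously I would proceed by regularization: fix an intrinsic function $\phi_\varepsilon\in\Psi^Q(S_\theta)$ with $\phi_\varepsilon\rightarrow 1$ pointwise on $S_\theta$ as $\varepsilon\rightarrow 0^+$ (for instance a product of factors of the form $\frac{s}{s+\varepsilon}$ and $\frac{1}{1+\varepsilon s}$, raised to sufficiently high powers), so that both $q\phi_\varepsilon$ and $p\phi_\varepsilon$ lie in $\Psi_L^Q(S_\theta)$. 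Applying Theorem \ref{thm_Product_rule_decaying} to the decomposition $p\phi_\varepsilon=(p/q)\cdot(q\phi_\varepsilon)$ and passing to the limit $\varepsilon\rightarrow 0^+$ inside the defining integrals \eqref{Eq_Q_functional_calculus_decaying} by a dominated-convergence argument, one identifies $(q\phi_\varepsilon)(T)\rightarrow q[T]$, $D(q\phi_\varepsilon)(T)\rightarrow Dq[T]$ and $D(p\phi_\varepsilon)(T)\rightarrow Dp[T]$ on the appropriate domains $\dom(T^m)$.

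The hardest part will be controlling these limits uniformly and, in particular, identifying the limiting operators with the polynomial-calculus expressions $Dp[T]$ and $Dq[T]$ that appear in \eqref{Eq_Rational_equivalence_decaying}. I expect this step to rely on the integral representation \eqref{Eq_Polynomial_bijective_4} of $q[T]^{-1}$ from the proof of Lemma \ref{lem_Polynomial_bijective} together with the resolvent equation \eqref{Eq_Q_resolvent_equation}, which together allow one to reduce the resulting boundary integrals (on a bounded slice Cauchy domain surrounding the zeros of $q$ or on $\partial(S_\varphi\cap\mathbb{C}_J)$) to explicit polynomial operator expressions, thereby closing the identification of the limit with the right-hand side of \eqref{Eq_Rational_equivalence_decaying1}--\eqref{Eq_Rational_equivalence_decaying2}.
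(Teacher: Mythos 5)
Your preliminary paragraph (bijectivity of $q[T]$, $q[\overline{T}]$, and $\frac{p}{q}\in\Psi^Q(S_\theta)$) is correct and matches the paper's first step. However, your main plan is genuinely different from the paper's and, as sketched, contains a circularity that you have not resolved. You propose to factor $p\phi_\varepsilon=(p/q)(q\phi_\varepsilon)$, apply the product rule of Theorem~\ref{thm_Product_rule_decaying}, and pass to the limit $\varepsilon\to 0^+$. But the convergences you need, $D(q\phi_\varepsilon)(T)\to Dq[T]$ and $D(p\phi_\varepsilon)(T)\to Dp[T]$, are precisely assertions of the form ``the integral-based harmonic calculus of a polynomial equals the polynomial calculus \eqref{Eq_Derivative_polynomial_operator}'' --- which is the substance of Proposition~\ref{prop_Rational_equivalence_decaying} itself (see also \cite[Lemma 7.2.9]{FJBOOK} for the $S$-calculus analogue). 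You cannot invoke this identification as a tool to prove the proposition without establishing it independently. Moreover, $D(q\phi_\varepsilon)(T)$ is bounded while $Dq[T]$ is unbounded on $\dom(T^{m-1})$, so the convergence you invoke must be a strong resolvent / graph-type convergence on a core, and dominated convergence on the defining contour integral \eqref{Eq_Q_functional_calculus_decaying} does not by itself give this: on the unbounded sector the integrands $Q_{c,s}^{-1}(T)q(s)\phi_\varepsilon(s)$ are not dominated uniformly in $\varepsilon$ (the pointwise limit $Q_{c,s}^{-1}(T)q(s)$ is not absolutely integrable once $\deg q\ge 2$). This step is exactly ``the hardest part'' you flag, and without it the argument does not close.

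The paper avoids the product rule entirely. It shrinks the contour $\partial(S_\varphi\cap\mathbb{C}_J)$ to the boundary of a \emph{bounded} slice Cauchy domain $U_{\varepsilon,R}$ enclosing the zeros of $q$ (allowed because $\frac{p}{q}$ decays; see \eqref{Eq_Rational_equivalence_1}), then introduces $B=\frac{1}{\pi}\int_{\partial(U_{\varepsilon,R}\cap\mathbb{C}_J)}Q_{c,s}^{-1}(T)\,ds_J\,\frac{1}{q(s)}$ and proves inductively, via the key algebraic identity $Q_{c,s}^{-1}(T)s=\overline{T}Q_{c,s}^{-1}(T)+S_L^{-1}(s,T)$, that $\overline{T}^jB$ splits into an integral term and the polynomial term $2\sum_{k=0}^{j-1}T^k\overline{T}^{j-1-k}q[T]^{-1}$ (see \eqref{Eq_Rational_equivalence_2}). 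Integrating $q(s)/q(s)=1$ then pins down $B=-Dq[T]q[T]^{-1}q[\overline{T}]^{-1}$, and the same identity applied to $p$ closes the argument. This contour-shift-plus-induction mechanism is precisely how the paper bridges the gap between the contour-integral calculus and the purely algebraic polynomial calculus, and it is the missing ingredient in your proposal; if you want to pursue your regularization route, you would in effect have to reproduce this machinery to justify the limit identifications, at which point the regularization is superfluous.
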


\begin{rem}
In \eqref{Eq_Rational_equivalence_decaying} the left hand side is understood as the harmonic functional calculus \eqref{Eq_Q_functional_calculus_decaying} (indicated by the round brackets), while the terms on the two right hand sides are understood as the polynomial functional calculus \eqref{Eq_Polynomial_functional_calculus} and as the \textit{harmonic polynomial functional calculus}
\begin{equation}\label{Eq_Derivative_polynomial_operator}
Dp[T]:=-2\sum\limits_{i=0}^np_i\sum\limits_{k=0}^{i-1}T^k\overline{T}^{i-1-k},
\end{equation}
with $\dom(Dp[T]):=\dom(T^{n-1})$, (indicated by square brackets). Note, that the operator \eqref{Eq_Derivative_polynomial_operator} is motivated by the action $Ds^i=-2\sum_{k=0}^{i-1}s^k\overline{s}^{i-1-k}$ of the Cauchy Fueter operator \eqref{Eq_Cauchy_Fueter_operator} on monomials, see \cite[Lemma 1]{B}.
\end{rem}

\begin{proof}[Proof of Proposition \ref{prop_Rational_equivalence_decaying}]
Note, that the second equation \eqref{Eq_Rational_equivalence_decaying2} follows from the first one \eqref{Eq_Rational_equivalence_decaying1} when we replace $T$ by $\overline{T}$ and use that all terms involving $D$ stay the same due to Lemma~\ref{lem_Properties_Q_functional_calculus_decaying}~iii) and the definitions \eqref{Eq_Polynomial_functional_calculus} and \eqref{Eq_Derivative_polynomial_operator} of the polynomial functional calculi. Hence it is left to prove the equality \eqref{Eq_Rational_equivalence_decaying1}.

\medskip

In the \textit{first step} we will change the integration path $\partial(S_\varphi\cap\mathbb{C}_J)$ in the integrals \eqref{Eq_S_functional_calculus_decaying} and \eqref{Eq_Q_functional_calculus_decaying} of $(\frac{p}{q})(T)$ and $D(\frac{p}{q})(T)$, to some finite path in $\overline{S_\omega}^c$ surrounding all the zeros of $q$. Since the closed set $\overline{S_\omega}$ does not contain any zeros of the polynomial $q$ by assumption iii), we can choose $\theta\in(\omega,\pi)$ small enough such that $\overline{S_\theta}$ does not contain any zeros of $q$ either. This means that $\frac{p}{q}$ is intrinsic on $S_\theta$. Moreover, $p$ admits a zeros of at least order $2$ at the origin by assumption ii), $q$ does not admit a zero at the origin by assumption iii) and $\deg(q)\geq\deg(p)+1$ by assumption i). These three properties ensure the existence of a constant $C_1\geq 0$ such that
\begin{equation}\label{Eq_Rational_estimate}
\Big|\frac{p(s)}{q(s)}\Big|\leq C_1\frac{|s|^2}{1+|s|^3},\quad s\in S_\theta,
\end{equation}
and hence $\frac{p}{q}\in\Psi^Q(S_\theta)$. By choosing $\varepsilon$ small and $R$ large enough, one can ensure that all the zeros of $q$ lie inside the set

\medskip

\begin{minipage}{0.39\textwidth}
\begin{center}
\begin{tikzpicture}
\fill[black!30] (2.27,1.06)--(0,0)--(2.27,-1.06) arc (-25:25:2.5);
\draw (2.27,1.06)--(0,0)--(2.27,-1.06);
\fill[black!30] (1.27,1.27)--(0.49,0.49) arc (45:315:0.7)--(1.27,-1.27) arc (315:45:1.8);
\draw (1.27,1.27)--(0.49,0.49) arc (45:315:0.7)--(1.27,-1.27) arc (315:45:1.8);
\draw (-0.1,1.15) node[anchor=center] {\small{zeros of $q$}};
\draw (0.8,0) arc (0:25:0.8) (0.6,-0.05) node[anchor=south] {\small{$\omega$}};
\draw (1.2,0) arc (0:35:1.2) (0.95,0) node[anchor=south] {\small{$\varphi$}};
\draw (1.6,0) arc (0:45:1.6) (1.35,0.1) node[anchor=south] {\small{$\theta$}};
\draw (1.8,-0.05) node[anchor=north] {\large{$\sigma_S(T)$}};
\draw (-1.2,-0.05) node[anchor=north] {\large{$U_{\varepsilon,R}$}};
\draw[->] (-2.3,0)--(2.9,0);
\draw[->] (0,-2.3)--(0,2.3) node[anchor=east] {\large{$\mathbb{C}_J$}};
\draw[thick] (1.64,1.15)--(0.41,0.29) arc (35:325:0.5)--(1.64,-1.15) arc (325:35:2);
\draw[thick,->] (1.64,1.15)--(0.82,0.57);
\draw(0.95,0.55) node[anchor=south east] {\small{$\gamma_{\varepsilon,R}$}};
\draw[thick,->] (0.41,-0.29)--(1.06,-0.75);
\draw(1.2,-0.7) node[anchor=north east] {\small{$\gamma_{\varepsilon,R}$}};
\draw[thick,->] (-0.45,0.19)--(-0.46,0.17);
\draw (-0.41,0.17) node[anchor=west] {\small{$\tau_\varepsilon$}};
\draw[thick,->] (-1.84,0.78)--(-1.85,0.76);
\draw (-1.8,0.76) node[anchor=east] {\small{$\tau_R$}};
\end{tikzpicture}
\end{center}
\end{minipage}
\begin{minipage}{0.6\textwidth}
\begin{equation*}
U_{\varepsilon,R}:=\Set{s\in\overline{S_\varphi}^c | \varepsilon<|s|<R}.
\end{equation*}
If we decompose $\partial(U_{\varepsilon,R}\cap\mathbb{C}_J)=\tau_R\ominus\gamma_{\varepsilon,R}\ominus\tau_\varepsilon$, using the curves
\begin{align*}
\gamma_{\varepsilon,R}(t)&:=\begin{cases} -te^{J\varphi}, & t\in(-R,-\varepsilon), \\ te^{-J\varphi}, & t\in(\varepsilon,R), \end{cases} \\
\tau_\varepsilon(\phi)&:=\varepsilon e^{J\phi},\hspace{0.8cm}\phi\in(\varphi,2\pi-\varphi), \\
\tau_R(\phi)&:=Re^{J\phi},\hspace{0.7cm}\phi\in(\varphi,2\pi-\varphi),
\end{align*}
\end{minipage}

\medskip

we note that the two integrals
\begin{equation*}
\lim\limits_{\varepsilon\rightarrow 0^+}\int_{\tau_\varepsilon}Q_{c,s}^{-1}(T)ds_J\frac{p(s)}{q(s)}=\lim\limits_{R\rightarrow\infty}\int_{\tau_R}Q_{c,s}^{-1}(T)ds_J\frac{p(s)}{q(s)}=0,
\end{equation*}
vanish due to the estimates \eqref{Eq_Q_resolvent_estimate} and \eqref{Eq_Rational_estimate}. Hence we obtain
\begin{align}
D\Big(\frac{p}{q}\Big)(T)&=\frac{-1}{\pi}\int_{\partial(S_\varphi\cap\mathbb{C}_J)}Q_{c,s}^{-1}(T)ds_J\frac{p(s)}{q(s)} \notag \\
&=\frac{-1}{\pi}\lim\limits_{\varepsilon\rightarrow 0}\lim\limits_{R\rightarrow\infty}\int_{\gamma_{\varepsilon,R}}Q_{c,s}^{-1}(T)ds_J\frac{p(s)}{q(s)} \notag \\
&=\frac{1}{\pi}\lim\limits_{\varepsilon\rightarrow 0}\lim\limits_{R\rightarrow\infty}\int_{\partial(U_{\varepsilon,R}\cap\mathbb{C}_J)}Q_{c,s}^{-1}(T)ds_J\frac{p(s)}{q(s)} \notag \\
&=\frac{1}{\pi}\int_{\partial(U_{\varepsilon,R}\cap\mathbb{C}_J)}Q_{c,s}^{-1}(T)ds_J\frac{p(s)}{q(s)}, \label{Eq_Rational_equivalence_1}
\end{align}
where in the last line we used that by the Cauchy theorem the integral does not depend on the choice of $\varepsilon$ and $R$ as long all zeros of $q(s)$ are inside $U_{\varepsilon,R}$.

\medskip

In the \textit{second step} we consider the operator
\begin{equation}\label{Eq_Rational_equivalence_13}
B:=\frac{1}{\pi}\int_{\partial(U_{\varepsilon,R}\cap\mathbb{C}_J)}Q_{c,s}^{-1}(T)ds_J\frac{1}{q(s)},
\end{equation}
for which we will prove that $\ran(B)\subseteq\dom(\overline{T}^j)$ and
\begin{equation}\label{Eq_Rational_equivalence_2}
\overline{T}^jB=\frac{1}{\pi}\int_{\partial(U_{\varepsilon,R}\cap\mathbb{C}_J)}Q_{c,s}^{-1}(T)ds_J\frac{s^j}{q(s)}+2\sum\limits_{k=0}^{j-1}T^k\overline{T}^{j-1-k}q[T]^{-1},\qquad j\in\{0,\dots,m\},
\end{equation}
where for $j=0$ the sum $\sum_{k=0}^{-1}$ is understood to be $0$. First of all, we note that $q[T]$ is bijective due to Lemma~\ref{lem_Polynomial_bijective}. The induction start $j=0$ is clear by the definition \eqref{Eq_Rational_equivalence_13} of the operator $B$. For the induction step $j-1\rightarrow j$ we use the identity
\begin{equation*}
Q_{c,s}^{-1}(T)s=\overline{T}Q_{c,s}^{-1}(T)+S_L^{-1}(s,T),
\end{equation*}
which follows immediately from the definition \eqref{Eq_S_resolvent} of the left $S$-resolvent, to get
\begin{align*}
\frac{1}{\pi}\int_{\partial(U_{\varepsilon,R}\cap\mathbb{C}_J)}Q_{c,s}^{-1}(T)ds_J\frac{s^j}{q(s)}&=\frac{1}{\pi}\int_{\partial(U_{\varepsilon,R}\cap\mathbb{C}_J)}\big(\overline{T}Q_{c,s}^{-1}(T)+S_L^{-1}(s,T)\big)ds_J\frac{s^{j-1}}{q(s)} \\
&=\overline{T}\Big(\overline{T}^{j-1}B-2\sum\limits_{k=0}^{j-2}T^k\overline{T}^{j-2-k}q[T]^{-1}\Big)-2T^{j-1}q[T]^{-1} \\
&=\overline{T}^jB-2\sum\limits_{k=0}^{j-1}T^k\overline{T}^{j-1-k}q[T]^{-1},
\end{align*}
where in the second line we used the induction assumption \eqref{Eq_Rational_equivalence_2} for $j-1$ as well as the identities \eqref{Eq_Polynomial_bijective_2} and \eqref{Eq_Polynomial_bijective_4}. Bringing the very right term to the left hand side finishes the induction step.

\medskip

In the \textit{third step} we show that
\begin{equation}\label{Eq_Rational_equivalence_3}
B=-Dq[T]q[T]^{-1}q[\overline{T}]^{-1}.
\end{equation}
Since $\overline{U_{\varepsilon,R}}\subseteq\rho_S(T)$, where $s\mapsto Q_{c,s}(T)$ is slice hyperholomorphic, we get
\begin{align*}
0&=\frac{1}{\pi}\int_{\partial(U_{\varepsilon,R}\cap\mathbb{C}_J)}Q_{c,s}^{-1}(T)ds_J1=\frac{1}{\pi}\int_{\partial(U_{\varepsilon,R}\cap\mathbb{C}_J)}Q_{c,s}^{-1}(T)ds_J\frac{q(s)}{q(s)} \\
&=\sum\limits_{j=0}^m\frac{b_j}{\pi}\int_{\partial(U_{\varepsilon,R}\cap\mathbb{C}_J)}Q_{c,s}^{-1}(T)ds_J\frac{s^j}{q(s)} \\
&=\sum\limits_{j=0}^mb_j\Big(\overline{T}^jB-2\sum\limits_{k=0}^{j-1}T^k\overline{T}^{j-1-k}q[T]^{-1}\Big)=q[\overline{T}]B+Dq[T]q[T]^{-1},
\end{align*}
where in the last line we used \eqref{Eq_Rational_equivalence_2} and the definition \eqref{Eq_Derivative_polynomial_operator} of $Dq[T]$. Using the bijectivity of $q[\overline{T}]$ from Lemma~\ref{lem_Polynomial_bijective}, this is exactly equation in \eqref{Eq_Rational_equivalence_3}.

\medskip

In the \textit{fourth step} we combine the results \eqref{Eq_Rational_equivalence_1}, \eqref{Eq_Rational_equivalence_2} and \eqref{Eq_Rational_equivalence_3} to get the stated equation
\begin{align*}
D\Big(\frac{p}{q}\Big)(T)&=\frac{1}{\pi}\int_{\partial(U_{\varepsilon,R}\cap\mathbb{C}_J)}Q_{c,s}^{-1}(T)ds_J\frac{p(s)}{q(s)}=\sum\limits_{i=0}^n\frac{a_i}{\pi}\int_{\partial(U_{\varepsilon,R}\cap\mathbb{C}_J)}Q_{c,s}^{-1}(T)ds_J\frac{s^i}{q(s)} \\
&=\sum\limits_{i=0}^na_i\Big(\overline{T}^iB-2\sum\limits_{k=0}^{i-1}T^k\overline{T}^{i-1-k}q[T]^{-1}\Big)=p[\overline{T}]B+Dp[T]q[T]^{-1} \\
&=\Big(Dp[T]q[T]-p[\overline{T}]Dq[T]\Big)q[T]^{-1}q[\overline{T}]^{-1}. \qedhere
\end{align*}
\end{proof}

\section{The harmonic $H^\infty$-functional calculus}\label{sec_Harmonic_functional_calculus_for_growing_functions}

In this section we want to enlarge the harmonic functional calculus of Definition~\ref{defi_Q_functional_calculus_decaying} to slice hyperholomorphic functions on a sector, which are polynomially growing at $0$ and at $\infty$. More precisely we consider the following function spaces.

\begin{defi}\label{defi_Space_of_increasing_functions}
For every $\theta\in(0,\pi)$ we define the function spaces

\begin{enumerate}
\item[i)] $\mathcal{F}_L(S_\theta):=\Set{f\in\mathcal{SH}_L(S_\theta) | \exists\alpha>0,\,C_\alpha\geq 0: |f(s)|\leq C_\alpha\big(|s|^\alpha+\frac{1}{|s|^\alpha}\big),\,s\in S_\theta}$
\item[ii)] $\mathcal{F}_R(S_\theta):=\Set{f\in\mathcal{SH}_R(S_\theta) | \exists\alpha>0,\,C_\alpha\geq 0: |f(s)|\leq C_\alpha\big(|s|^\alpha+\frac{1}{|s|^\alpha}\big),\,s\in S_\theta}$
\item[iii)] $\mathcal{F}(S_\theta):=\Set{f\in\mathcal{N}(S_\theta) | \exists\alpha>0,\,C_\alpha\geq 0: |f(s)|\leq C_\alpha\big(|s|^\alpha+\frac{1}{|s|^\alpha}\big),\,s\in S_\theta}$
\end{enumerate}
\end{defi}

First, we generalize the $S$-functional calculus of Definition~\ref{defi_S_functional_calculus_decaying} to these polynomially growing functions by some regularizing procedure. This is called the $H^\infty$-functional calculus and was for example already considered in \cite[Section 6.3]{CGK}.

\begin{defi}[The $H^\infty$-functional calculus]\label{defi_S_functional_calculus_growing}
Let $T\in\mathcal{KC}(V)$ be an injective operator of type $\omega$. Then for every $f\in\mathcal{F}_L(S_\theta)$, $\theta\in(\omega,\pi)$, we define the $H^\infty$-functional calculus
\begin{equation}\label{Eq_S_functional_calculus_growing}
f(T):=e(T)^{-1}(ef)(T),
\end{equation}
where $e\in\Psi(S_\theta)$ is such that $e(T)$ is injective and $ef\in\Psi_L(S_\theta)$. The operators $e(T)$ and $(ef)(T)$ are understood by the $S$-functional calculus \eqref{Eq_S_functional_calculus_decaying} for decaying functions.
\end{defi}

It is then proven in \cite[Theorem 6.3.4]{CGK} that such a regularizer function $d$ exists and that \eqref{Eq_S_functional_calculus_growing} is independent of $e$.

\medskip

Motivated by the product rule \eqref{Eq_Product_rule_left_decaying}, we will now also generalize the harmonic functional calculus from Definition~\ref{defi_Q_functional_calculus_decaying} to the functions from Definition \ref{defi_Space_of_increasing_functions}.

\begin{defi}[The harmonic $H^\infty$-functional calculus]\label{defi_Q_functional_calculus_growing}
Let $T\in\mathcal{KC}(V)$ be such that $T,\overline{T}$ are injective operators of type $\omega$. Then for every $f\in\mathcal{F}_L(S_\theta)$, $\theta\in(\omega,\pi)$ we define the \textit{harmonic $H^\infty$-functional calculus}
\begin{subequations}\label{Eq_Q_functional_calculus_growing}
\begin{align}
Df(T):=&e(\overline{T})^{-1}\big(D(ef)(T)-f(T)De(T)\big) \label{Eq_Q_functional_calculus_growing1} \\
=&e(T)^{-1}\big(D(ef)(T)-f(\overline{T})De(T)\big), \label{Eq_Q_functional_calculus_growing2}
\end{align}
\end{subequations}
where $e\in\Psi^Q(S_\theta)$, such that $e(T),e(\overline{T})$ are injective and $ef\in\Psi_L^Q(S_\theta)$. Here $D(ef)(T),De(T)$ are understood by the harmonic functional calculus \eqref{Eq_Q_functional_calculus_decaying} for decaying functions, $e(T),e(\overline{T})$ by the $S$-functional calculus \eqref{Eq_S_functional_calculus_decaying} for decaying functions and $f(T),f(\overline{T})$ by the $H^\infty$-functional calculus \eqref{Eq_S_functional_calculus_growing}.
\end{defi}

\begin{rem}
Note, that in the Definitions~\ref{defi_S_functional_calculus_growing}~\&~\ref{defi_Q_functional_calculus_growing} only left slice hyperholomorphic functions $f\in\mathcal{F}_L(S_\theta)$ are considered. The reason why the natural analogues
\begin{equation*}
f(T):=(fe)(T)e(T)^{-1}\quad\text{and}\quad(fD)(T):=\big(((fe)D)(T)-f(\overline{T})De(T)\big)e(T)^{-1},
\end{equation*}
of \eqref{Eq_S_functional_calculus_growing} and \eqref{Eq_Q_functional_calculus_growing2} for right slice hyperholomorphic functions $f\in\mathcal{F}_R(S_\theta)$ are missing, is that these terms would be defined only on $\dom(e(T)^{-1})$ and hence not independent of the choice of the regularizer; see also \cite[Remark 7.2.2]{FJBOOK}.
\end{rem}

\begin{thm}
Let $T\in\mathcal{KC}(V)$ be such that $T,\overline{T}$ are injective operators of type $\omega$. Then for every $f\in\mathcal{F}_L(S_\theta)$, $\theta\in(\omega,\pi)$ a regularizer function $e$ in the sense of Definition~\ref{defi_Q_functional_calculus_growing} exists, \eqref{Eq_Q_functional_calculus_growing1} and \eqref{Eq_Q_functional_calculus_growing2} equal and do not depend on the choice of the regularizer $e$. Moreover, for every $f,g\in\Psi_L(S_\theta)$ with $Df(s)=Dg(s)$, also the operators $Df(T)=Dg(T)$ coincide.
\end{thm}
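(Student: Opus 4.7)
The plan breaks into four claims, with Theorem~\ref{thm_Product_rule_decaying} as the central tool throughout. For the \emph{existence of a regularizer}, I would exhibit the explicit family $e_n(s) := s^n(1+s)^{-2n}$, which is intrinsic, lies in $\Psi^Q(S_\theta)$ for $n \geq 2$, and satisfies $e_n f \in \Psi_L^Q(S_\theta)$ as soon as $n$ exceeds the polynomial growth exponent of $f$. The rational equivalence for the $S$-functional calculus identifies $e_n(T) = T^n(\mathcal{I}+T)^{-2n}$ and likewise for $\overline{T}$; since $T, \overline{T}$ are injective and since $-1 \notin \overline{S_\omega}$ for $\omega < \pi$ makes $\mathcal{I}+T$ and $\mathcal{I}+\overline{T}$ bijective, injectivity of $e_n(T), e_n(\overline{T})$ follows.

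For the \emph{equality of \eqref{Eq_Q_functional_calculus_growing1} and \eqref{Eq_Q_functional_calculus_growing2}}, I would apply both product rules \eqref{Eq_Product_rule_left_decaying1} and \eqref{Eq_Product_rule_left_decaying2} to the decaying product $e \cdot (ef) \in \Psi_L^Q$ with intrinsic $e \in \Psi^Q$ and equate them to obtain
\begin{equation*}
(e(T)-e(\overline{T}))\,D(ef)(T) = De(T)\big((ef)(T)-(ef)(\overline{T})\big).
\end{equation*}
Substituting the regularizer identity $(ef)(T) = e(T)f(T)$ (and its conjugate version), and using Corollary~\ref{cor_Commutation_g_with_Df} to push $De(T)$ past $(ef)(T)$ and $(ef)(\overline{T})$, this rearranges into
\begin{equation*}
e(T)\big(D(ef)(T)-f(T)De(T)\big) = e(\overline{T})\big(D(ef)(T)-f(\overline{T})De(T)\big),
\end{equation*}
and multiplying by $e(T)^{-1}e(\overline{T})^{-1}$---permissible because $e(T)$ and $e(\overline{T}) = \overline{e(T)}$ commute (their product is the ``modulus squared'' built from commuting components)---yields the desired equality. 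For \emph{independence of the regularizer}, given two regularizers $e_1, e_2$ I would form $e := e_1 e_2$ (still a regularizer) and show that the $e$-definition agrees with the $e_1$-definition; symmetry then forces all three to agree. Expanding $D(ef)(T) = De_1(T)(e_2 f)(T) + e_1(\overline{T}) D(e_2 f)(T)$ and $De(T) = De_1(T)e_2(T) + e_1(\overline{T}) De_2(T)$ via the product rule, substituting $(e_2 f)(T) = e_2(T) f(T)$, and using Proposition~\ref{prop_Commutation_with_Df} and Corollary~\ref{cor_Commutation_g_with_Df} to commute $De_1(T)$ past $e_2(T) f(T)$, the cross terms cancel and one is left with $e_1(\overline{T})^{-1}(D(e_2 f)(T) - f(T) De_2(T))$, which is exactly the $e_2$-definition.

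Finally, the kernel statement follows verbatim from the end of the proof of Theorem~\ref{thm_Q_functional_calculus_decaying}: $Df = Dg$ with $f,g \in \Psi_L(S_\theta)$ forces $f-g$ to be a constant $c$ by the Vekua-type argument on the components $\alpha,\beta$, and the $\Psi_L$-decay at infinity forces $c = 0$, so $f = g$ and $Df(T) = Dg(T)$ trivially. The principal obstacle throughout is the bookkeeping of domains, since $f(T)$ is generally an unbounded closed operator: every commutation ``push $f(T)$ past $De(T)$'' or ``invert $e(\overline{T})$ on the image'' requires verifying that the relevant vectors lie in $\dom(f(T))$ and that the regularized expression $D(ef)(T) - f(T)De(T)$ really takes values in $\ran(e(\overline{T}))$, so that \eqref{Eq_Q_functional_calculus_growing1}--\eqref{Eq_Q_functional_calculus_growing2} define a genuine closed operator whose domain is independent of the regularizer.
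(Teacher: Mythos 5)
Your proof uses the same ingredients as the paper — the two product-rule variants of Theorem~\ref{thm_Product_rule_decaying}, the commutation results Corollary~\ref{cor_Commutation_g_with_Df} and Lemma~\ref{lem_Properties_Q_functional_calculus_decaying}~iii), a rational regularizer $s^n(1+s)^{-k}$ (the paper takes $k=2n-1$, you take $k=2n$; both work), and the Vekua argument for the kernel statement — and your derivation of the equality of \eqref{Eq_Q_functional_calculus_growing1} and \eqref{Eq_Q_functional_calculus_growing2} is precisely the paper's identity \eqref{Eq_Q_functional_calculus_decaying_14} specialized to $e_1=e_2=e$.

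The soft spot is the independence step. The substitution $(e_2f)(T)=e_2(T)f(T)$ is not an equality of operators: since $e_2(T)$ is bounded one has $f(T)e_2(T)=(e_2f)(T)$ on all of $V$, whereas $e_2(T)f(T)$ has domain only $\dom f(T)$ and is therefore a proper restriction of $(e_2f)(T)$ in general. More importantly, once the unbounded $f(T)$ appears, the commutations you invoke — pushing $De_1(T)$ past $e_2(T)f(T)$, or exchanging $f(T)e_1(\overline{T})$ with $e_1(\overline{T})f(T)$ — are only operator \emph{inclusions} (compare Corollary~\ref{cor_Commutation_g_with_Df_growing}, which is stated with $\subseteq$). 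Tracking domains through, the $e=e_1e_2$ computation yields that the $e$-regularized operator \emph{contains} the $e_2$-regularized one, and by symmetry also contains the $e_1$-regularized one; a pair of inclusions into a common extension is not yet the claimed equality. The paper sidesteps this entirely: it applies both product rules to $D(e_1e_2f)(T)$ for generic $e_1,e_2$ and rearranges to \eqref{Eq_Q_functional_calculus_decaying_12} while every operator in sight is still \emph{bounded} (so Corollary~\ref{cor_Commutation_g_with_Df} gives honest equalities), and only then multiplies the resulting bounded-operator identity by the single unbounded inverse $(e_1(\overline{T})e_2(T))^{-1}$, which preserves equality; taking $e_1=e_2$ in the result gives the equivalence of the two definitions, and plugging that back gives independence. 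Reorganizing your argument to delay the appearance of $f(T)$ in this way closes the gap. (Small slip: after your cross-term cancellation the remaining expression is $e_2(\overline{T})^{-1}\bigl(D(e_2f)(T)-f(T)De_2(T)\bigr)$, not $e_1(\overline{T})^{-1}(\cdots)$.)
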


\begin{proof}
For the independence of the regularizer $e$ let $e_1,e_2\in\Psi^Q(S_\theta)$ be two regularizers such that $e_1(T),e_1(\overline{T}),e_2(T),e_2(\overline{T})$ are injective and $e_1f,e_2f\in\Psi_L^Q(S_\theta)$. From the two versions of the product rule \eqref{Eq_Product_rule_left_decaying1} and \eqref{Eq_Product_rule_left_decaying2} applied to $D(e_1e_2f)(T)$, we get
\begin{equation}\label{Eq_Q_functional_calculus_decaying_14}
De_1(T)(e_2f)(T)+e_1(\overline{T})D(e_2f)(T)=De_2(T)(e_1f)(\overline{T})+e_2(T)D(e_1f)(T).
\end{equation}
Rearranging this equation and interchanging $De_1(T)$ and $(e_2f)(T)$ as well as $De_2(T)$ and $(e_1f)(\overline{T})$, which is allowed by Corollary~\ref{cor_Commutation_g_with_Df} and Lemma~\ref{lem_Properties_Q_functional_calculus_decaying}~iii), gives
\begin{equation}\label{Eq_Q_functional_calculus_decaying_12}
e_2(T)D(e_1f)(T)-(e_2f)(T)De_1(T)=e_1(\overline{T})D(e_2f)(T)-(e_1f)(\overline{T})De_2(T).
\end{equation}
Next, we note that $e_1(\overline{T})e_2(T)=e_2(T)e_1(\overline{T})$ commute due to \cite[Lemma 3.9]{CGdiffusion2018} and so do their inverses $e_1^{-1}(\overline{T})e_2^{-1}(T)=e_2^{-1}(T)e_1^{-1}(\overline{T})$. Multiplying these two inverses from the left to \eqref{Eq_Q_functional_calculus_decaying_12} and using the definition of the $H^\infty$-functional calculus \eqref{Eq_S_functional_calculus_growing} gives
\begin{equation}\label{Eq_Q_functional_calculus_decaying_11}
e_1(\overline{T})^{-1}\big(D(e_1f)(T)-f(T)De_1(T)\big)=e_2(T)^{-1}\big(D(e_2f)(T)-f(\overline{T})De_2(T)\big).
\end{equation}
This equation with the particular choice $e_1=e_2$ gives the identity
\begin{equation*}
e_2(\overline{T})^{-1}\big(D(e_2f)(T)-f(T)De_2(T)\big)=e_2(T)^{-1}\big(D(e_2f)(T)-f(\overline{T})De_2(T)\big).
\end{equation*}
This equation on the one hand shows the equivalence of the definitions \eqref{Eq_Q_functional_calculus_growing1} and \eqref{Eq_Q_functional_calculus_growing2}, and on the other hand plugging it into \eqref{Eq_Q_functional_calculus_decaying_11}, gives the independence of the regularizer
\begin{equation*}
e_1(\overline{T})^{-1}\big(D(e_1f)(T)-f(T)De_1(T)\big)=e_2(\overline{T})^{-1}\big(D(e_2f)(T)-f(T)De_2(T)\big).
\end{equation*}
It is left to show that a regularizer function $e\in\Psi^Q(S_\theta)$ with $e(\overline{T})$ injective and $ef\in\Psi_L^Q(S_\theta)$ exists. Since $f\in\mathcal{F}_L(S_\theta)$, it admits the estimate
\begin{equation*}
|f(s)|\leq C_\alpha\Big(|s|^\alpha+\frac{1}{|s|^\alpha}\Big),\qquad s\in S_\theta,
\end{equation*}
for some $\alpha>0$, $C_\alpha\geq 0$. For arbitrary $n\in\mathbb{N}$ with $n>1+\alpha$ we choose the regularizer
\begin{equation}\label{Eq_Q_functional_calculus_unbounded_3}
e(s):=\frac{s^n}{(1+s)^{2n-1}}.
\end{equation}
Then obviously $e\in\Psi^Q(S_\theta)$ and due to the asymptotics
\begin{equation*}
e(s)f(s)=\begin{cases} \mathcal{O}(|s|^{-n+1+\alpha}), & \text{as }s\rightarrow\infty, \\ \mathcal{O}(|s|^{n-\alpha}), & \text{as }s\rightarrow 0, \end{cases}
\end{equation*}
we also have $ef\in\Psi_L^Q(S_\theta)$. In order to show that $e(T)=T^n(1+T)^{-2n+1}$ is injective, we note that since $T$ is injective, the $n$-th power $T^n$ is injective as well. Since $(1+T)^{-2n+1}$ is bijective due to $-1\in\rho_S(T)$, we conclude that $e(T)$ is injective. The injectivity of $e(\overline{T})$ follows the same argument.

\medskip

In order to prove the independence of the kernel of $D$, let $f,g\in\Psi_L(S_\theta)$ with $Df(s)=Dg(s)$. Assume furthermore that $e$ is a regularizer for $f$ as well as for $g$. In the same way as in \eqref{Eq_Q_functional_calculus_decaying_16} one shows that $f=g+c$ only differ by a constant. Plugging this into \eqref{Eq_Q_functional_calculus_growing} gives
\begin{align*}
Df(T)&=e(\overline{T})^{-1}\big(D(ef)(T)-f(T)De(T)\big) \\
&=e(\overline{T})^{-1}\big(D(eg+ce)(T)-(g+c)(T)De(T)\big) \\
&=e(\overline{T})^{-1}\big(D(eg(T)+cDe(T)-g(T)De(T)-cDe(T)\big) \\
&=e(\overline{T})^{-1}\big(D(eg(T)-g(T)De(T)\big)=Dg(T),
\end{align*}
where in the third equation we used the linearity of the harmonic functional calculus in Lemma \ref{lem_Properties_Q_functional_calculus_decaying} ii) and the linearity $(g+c)(T)=g(T)+c$, which holds with equality if one of the operators is bounded.
\end{proof}

\begin{lem}\label{lem_Properties_Q_functional_calculus_growing}
Let $T\in\mathcal{KC}(V)$ with $T,\overline{T}$ being injective operators of type $\omega$. Then for every $f,g\in\mathcal{F}_L(S_\theta)$, $\theta\in(\omega,\pi)$, there holds

\begin{enumerate}
\item[i)] $Df(T)$ is a closed operator;
\item[ii)] $Df(T)+Dg(T)\subseteq D(f+g)(T)$;
\item[iii)] $Df(T)=Df(\overline{T})$.
\item[iv)] If $f$ is intrinsic, then $Df(T)\in\mathcal{KC}(V)$ with $\overline{Df(T)}=Df(T)$.
\end{enumerate}
\end{lem}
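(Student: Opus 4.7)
The plan is to handle each of the four statements by reducing everything to the already established properties of the decaying harmonic calculus (Lemma~\ref{lem_Properties_Q_functional_calculus_decaying}), the decaying $S$-functional calculus (Lemma~\ref{lem_Properties_S_functional_calculus_decaying}) and the $H^\infty$-calculus \eqref{Eq_S_functional_calculus_growing}, using the two equivalent representations \eqref{Eq_Q_functional_calculus_growing1}--\eqref{Eq_Q_functional_calculus_growing2}.

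For (i), I would fix any regularizer $e\in\Psi^Q(S_\theta)$ and write $Df(T)=e(\overline{T})^{-1}A$ with $A:=D(ef)(T)-f(T)De(T)$. The operator $e(\overline{T})^{-1}$ is closed, being the inverse of the bounded, injective operator $e(\overline{T})$, and $D(ef)(T)$, $De(T)$ are bounded by Lemma~\ref{lem_Properties_Q_functional_calculus_decaying}~i). Since $f(T)$ is closed as an $H^\infty$-operator and $De(T)$ is bounded and everywhere defined, the product $f(T)De(T)$ is closed, and so is $A$. A standard closedness diagram chase then finishes: if $v_n\to v$ with $Df(T)v_n\to w$, apply $e(\overline{T})$ to obtain $Av_n\to e(\overline{T})w$, then use closedness of $A$ to get $Av=e(\overline{T})w$ and hence $v\in\dom(Df(T))$ with $Df(T)v=w$.

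For (ii), I would first note that given regularizers $e_1,e_2$ for $f,g$, the product $e:=e_1e_2\in\Psi^Q(S_\theta)$ is a common regularizer, and $ef,eg,e(f+g)\in\Psi_L^Q(S_\theta)$. Plugging into \eqref{Eq_Q_functional_calculus_growing1} and using the linearity of the decaying harmonic and $S$-functional calculi (Lemmas~\ref{lem_Properties_Q_functional_calculus_decaying}~ii),~\ref{lem_Properties_S_functional_calculus_decaying}~ii)) together with $D(e(f+g))(T)=D(ef)(T)+D(eg)(T)$, a direct computation on any $v\in\dom(Df(T))\cap\dom(Dg(T))$ yields $Df(T)v+Dg(T)v=D(f+g)(T)v$. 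Equality in general fails because $(f+g)(T)\supseteq f(T)+g(T)$ only holds as an inclusion for unbounded closed operators, giving the stated inclusion. For (iii) I would compute $Df(\overline{T})$ via formula \eqref{Eq_Q_functional_calculus_growing1} applied to $\overline{T}$:
\begin{equation*}
Df(\overline{T})=e(T)^{-1}\bigl(D(ef)(\overline{T})-f(\overline{T})De(\overline{T})\bigr)=e(T)^{-1}\bigl(D(ef)(T)-f(\overline{T})De(T)\bigr),
\end{equation*}
where the second equality uses Lemma~\ref{lem_Properties_Q_functional_calculus_decaying}~iii) applied separately to the intrinsic $e$ and to $ef$. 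The right-hand side is exactly the representation \eqref{Eq_Q_functional_calculus_growing2} of $Df(T)$, so $Df(T)=Df(\overline{T})$.

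For (iv), the cleanest route is to choose an intrinsic regularizer $e$ (for example \eqref{Eq_Q_functional_calculus_unbounded_3}, which is intrinsic), so that $ef$ is intrinsic as well. Then Lemma~\ref{lem_Properties_Q_functional_calculus_decaying}~iv) gives $\overline{D(ef)(T)}=D(ef)(T)$ and $\overline{De(T)}=De(T)$, while Lemma~\ref{lem_Properties_S_functional_calculus_decaying}~iii) gives $\overline{e(T)}=e(\overline{T})$ and hence $\overline{e(\overline{T})^{-1}}=e(T)^{-1}$. The analogous identity $\overline{f(T)}=f(\overline{T})$ for the $H^\infty$-calculus of intrinsic $f$ then follows by regularization: $\overline{f(T)}=\overline{e(T)^{-1}(ef)(T)}=e(\overline{T})^{-1}(ef)(\overline{T})=f(\overline{T})$. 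Taking the conjugate of \eqref{Eq_Q_functional_calculus_growing1} therefore produces the expression \eqref{Eq_Q_functional_calculus_growing2} for $Df(T)$, proving $\overline{Df(T)}=Df(T)$. This self-conjugacy forces $Df(T)_1=Df(T)_2=Df(T)_3=0$ and $Df(T)=Df(T)_0$, so the commutation of components in $\mathcal{KC}(V)$ is automatic; two-sided linearity of $\dom(Df(T))$ follows from two-sided linearity of $\dom(De(T))$ and $\dom(f(T))$, while closedness is (i).

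The main obstacle I anticipate is the $\mathcal{KC}(V)$ bookkeeping in (iv): one has to verify that the components $Df(T)_i$ really make sense as two-sided linear operators on a common domain containing $\dom(Df(T)^2)$, which must be read off from the regularized representation. The trick that saves us is intrinsicality, which collapses all but the scalar component to zero, so the four-fold commutator conditions in the definition of $\mathcal{KC}(V)$ become trivial once $\overline{Df(T)}=Df(T)$ is established.
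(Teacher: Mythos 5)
Your proposal is correct and pursues the same overall strategy as the paper (regularize, reduce to the decaying harmonic and $S$-functional calculi, and exploit the two equivalent representations \eqref{Eq_Q_functional_calculus_growing1}--\eqref{Eq_Q_functional_calculus_growing2} and the product rule), but two of the four items are handled by a genuinely different device. For (i), the paper first rewrites $Df(T)=\big(e(T)e(\overline T)\big)^{-1}\big(e(T)D(ef)(T)-(ef)(T)De(T)\big)$, pushing all the unboundedness into the single left factor, so closedness is immediate from the standard fact that (closed)$\circ$(bounded, everywhere defined) is closed; you instead keep $Df(T)=e(\overline T)^{-1}\big(D(ef)(T)-f(T)De(T)\big)$, recognize the bracket $A$ as a (bounded)$+$(closed) hence closed operator, and then run a direct graph-closedness chase to circumvent the fact that (closed)$\circ$(closed) is not closed in general. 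Your chase is correct — the key point is that $Av=e(\overline T)w$ automatically lands in $\ran(e(\overline T))$ — but the paper's single rewrite is cleaner. For (iv), the paper conjugates only the \emph{bounded} bracket term $e(T)D(ef)(T)-(ef)(T)De(T)$ and uses the two forms of the product rule \eqref{Eq_Product_rule_left_decaying1}--\eqref{Eq_Product_rule_left_decaying2} to show equality with its conjugate; you conjugate the entire expression \eqref{Eq_Q_functional_calculus_growing1} including the unbounded factors $e(\overline T)^{-1}$ and $f(T)$, which requires (and you correctly supply, via regularization) the auxiliary identity $\overline{f(T)}=f(\overline T)$ for intrinsic $f$ in the $H^\infty$-calculus, together with $\overline{AB}=\overline{A}\,\overline{B}$ and $\overline{A^{-1}}=\overline{A}^{\,-1}$ for unbounded commuting-component operators. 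That route works, but it shifts the burden onto conjugation rules for unbounded products, which the paper deliberately avoids by localizing the conjugation to a bounded operator. Items (ii) and (iii) match the paper's proof essentially verbatim; your explicit choice of the common regularizer $e_1 e_2$ for (ii) is a slightly more concrete version of what the paper simply assumes.
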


\begin{proof}
Let $e$ be a regularizer of $f$ according to Definition~\ref{defi_Q_functional_calculus_growing}. For the proof of i), we first use the definition \eqref{Eq_S_functional_calculus_growing} of the $H^\infty$-functional calculus to rewrite \eqref{Eq_Q_functional_calculus_growing1} as
\begin{equation}\label{Eq_Properties_Q_functional_calculus_growing_1}
Df(T)=\big(e(T)e(\overline{T})\big)^{-1}\big(e(T)D(ef)(T)-(ef)(T)De(T)\big).
\end{equation}
Since this is the combination of an unbounded closed with a bounded operator, also its decomposition $Df(T)$ turns out to be a closed operator again. Next, we note that due to Lemma~\ref{lem_Properties_S_functional_calculus_decaying}~iii), we know that $e(T)$ and $e(\overline{T})$ are bounded operators with commuting components and that
\begin{equation*}
e(T)e(\overline{T})=e(T)\overline{e(T)}=|e(T)|^2
\end{equation*}
only admits the component $(e(T)e(\overline{T}))_0$, while $(e(T)e(\overline{T}))_i=0$, $i\in\{1,2,3\}$, vanish. Hence also the inverse $(e(T)e(\overline{T}))^{-1}=(e(T)e(\overline{T}))_0^{-1}$ is an unbounded operator only consisting of the inverse of that one real component. Moreover, $e(T),D(ef)(T),(ef)(T),De(T)$ are operators with commuting components as well, see Lemma~\ref{lem_Properties_Q_functional_calculus_decaying} and Lemma~\ref{lem_Properties_S_functional_calculus_decaying}, and the respective components are combinations of powers of the components $T_0,T_1,T_2,T_3$, see the explicit representations \eqref{Eq_Properties_Q_functional_calculus_decaying_2} and \eqref{Eq_Properties_S_functional_calculus_decaying_2}. Hence also the decomposition \eqref{Eq_Properties_Q_functional_calculus_growing_1} is an operator with commuting component, which prove that $Df(T)\in\mathcal{KC}(V)$.

\medskip

For the proof of ii) we assume that $e$ is a regularizer of $f$ and of $g$. Then $e$ is a regularizer of the sum $f+g$ as well. Combining now the facts

\begin{enumerate}
\item[$\circ$] $AB+AC\subseteq A(B+C)$,\hspace{3.05cm} for unbounded operators $A,B,C$ in $V$,
\item[$\circ$] $D(ef)(T)+D(eg)(T)=D(ef+eg)(T)$,\quad by Lemma~\ref{lem_Properties_Q_functional_calculus_decaying},
\item[$\circ$] $f(T)+g(T)\subseteq(f+g)(T)$,\hspace{2.55cm} by \cite[Theorem 5.6]{ACQS2016},
\end{enumerate}

leads us to the inclusion
\begin{align*}
Df(T)+Dg(T)&=e(\overline{T})^{-1}\big(D(ef)(T)-f(T)De(T)\big)+e(\overline{T})^{-1}\big(D(eg)(T)-g(T)De(T)\big) \\
&\subseteq e(\overline{T})^{-1}\Big(D(ef)(T)-f(T)De(T)+D(eg)(T)-g(T)De(T)\Big) \\
&\subseteq e(\overline{T})^{-1}\Big(D(ef+eg)(T)-(f+g)(T)De(T)\Big)=D(f+g)(T).
\end{align*}
The proof of iii) follows from the two equivalent definitions \eqref{Eq_Q_functional_calculus_growing1} and \eqref{Eq_Q_functional_calculus_growing2} and Lemma~\ref{lem_Properties_Q_functional_calculus_decaying}~iii).

\medskip

Finally, for the statement iv) it first follows from Lemma \ref{lem_Properties_Q_functional_calculus_decaying} iii) and Lemma \ref{lem_Properties_S_functional_calculus_decaying} iii), that
\begin{align*}
\overline{e(T)D(ef)(T)-(ef)(T)De(T)}&=e(\overline{T})D(ef)(T)-(ef)(\overline{T})De(T) \\
&=e(T)D(ef)(T)-(ef)(T)De(T),
\end{align*}
where the second equation follows from the two versions of the product rule \eqref{Eq_Product_rule_left_decaying1} and \eqref{Eq_Product_rule_left_decaying2}. Hence the operator $e(T)D(ef)(T)-(ef)(T)De(T)$, and consequently also $Df(T)$ in the representation \eqref{Eq_Properties_Q_functional_calculus_growing_1}, is two-sided linear. This means that $Df(T)$ only consists of one component and automatically implies $Df(T)\in\mathcal{KC}(V)$ as well as $\overline{Df(T)}=Df(T)$.
\end{proof}

\begin{prop}\label{prop_Commutation_with_Df_growing}
Let $T\in\mathcal{KC}(V)$ with $T,\overline{T}$ being injective operators of type $\omega$, $B:V\rightarrow V$ an everywhere defined bounded operator and $f\in\mathcal{F}(S_\theta)$, for some $\theta\in(\omega,\pi)$. Then there holds
\begin{equation*}
\begin{array}{c} \forall j\in\{0,\dots,3\}: BT_j=T_jB\quad\text{on }\dom(T) \\ \Downarrow \\ BDf(T)\subseteq Df(T)B. \end{array}
\end{equation*}
\end{prop}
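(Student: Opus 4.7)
The plan is to reduce the claim to the decaying case (Proposition~\ref{prop_Commutation_with_Df}) by unwinding the regularization procedure. Fix an intrinsic regularizer $e\in\Psi^Q(S_\theta)$ as in Definition~\ref{defi_Q_functional_calculus_growing}; since $f$ is intrinsic, so are $e$ and $ef\in\Psi_L^Q(S_\theta)$. Starting from
$$Df(T) = e(\overline{T})^{-1}\bigl(D(ef)(T) - f(T)De(T)\bigr),$$
the strategy is to show that $B$ commutes with each of the bounded operators $e(\overline{T})$, $D(ef)(T)$, $f(T)$, $De(T)$, and then propagate this commutation through the inverse $e(\overline{T})^{-1}$.

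The two harmonic factors $D(ef)(T)$ and $De(T)$ commute with $B$ directly by Proposition~\ref{prop_Commutation_with_Df}, since $e$ and $ef$ are both intrinsic elements of $\Psi^Q(S_\theta)$. For the $S$-functional-calculus factors $e(T)$, $e(\overline{T})$ and $(ef)(T)$, I would first derive an analogous auxiliary commutation: from $BT_j=T_jB$ on $\dom(T)$ one gets $BT_0=T_0B$ and $B|T|^2=|T|^2B$ on $\dom(T^2)$, hence $B$ commutes with $Q_{c,s}(T)$ on $\dom(T^2)$ and with $Q_{c,s}^{-1}(T)$ on all of $V$. Combined with $B\overline{T}=\overline{T}B$ on $\dom(T)$ and the inclusion $\ran(Q_{c,s}^{-1}(T))\subseteq\dom(T)$, this yields $BS_L^{-1}(s,T)=S_L^{-1}(s,T)B$ on $V$. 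Passing $B$ through the absolutely convergent integral \eqref{Eq_S_functional_calculus_decaying} (via Hille's theorem) then shows that $B$ commutes with $e(T)$, $e(\overline{T})$ and $(ef)(T)$. For the $H^\infty$-defined operator $f(T)=e(T)^{-1}(ef)(T)$, the inclusion $Bf(T)\subseteq f(T)B$ follows from the standard fact that if a bounded $B$ commutes with a closed injective $A$, then $BA^{-1}\subseteq A^{-1}B$.

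Putting the pieces together, set $K:=D(ef)(T)-f(T)De(T)$, for which $BK\subseteq KB$ by the previous step. Applying the same inverse-commutation principle to $e(\overline{T})$ gives
$$B\,Df(T)=B\,e(\overline{T})^{-1}K\subseteq e(\overline{T})^{-1}B\,K\subseteq e(\overline{T})^{-1}K\,B=Df(T)\,B,$$
which is exactly the claimed inclusion.

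The main obstacle is the domain bookkeeping in the last display: one must verify that if $x\in\dom(Df(T))$, meaning $Kx\in\ran(e(\overline{T}))$, then $BKx=KBx$ also lies in $\ran(e(\overline{T}))$. This is where the full equality $Be(\overline{T})=e(\overline{T})B$ on $V$ (rather than a mere inclusion) is essential, since it implies $B\ran(e(\overline{T}))\subseteq\ran(e(\overline{T}))$. All remaining manipulations are routine propagation of commutation relations through the respective functional calculi.
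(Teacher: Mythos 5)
There is a genuine gap. Your central auxiliary claim — that $BT_j=T_jB$ on $\dom(T)$ implies $B$ commutes with $Q_{c,s}(T)$, hence with $Q_{c,s}^{-1}(T)$ and $S_L^{-1}(s,T)$, and consequently with $e(T)$, $e(\overline{T})$, $(ef)(T)$ — is false in general. The operators
\begin{equation*}
Q_{c,s}(T)=s^2\mathcal{I}-2T_0s+|T|^2
\qquad\text{and}\qquad
S_L^{-1}(s,T)=(s\mathcal{I}-\overline{T})Q_{c,s}^{-1}(T)
\end{equation*}
involve left multiplication by the quaternionic scalars $s^2$ and $s$, which for $\Im(s)\neq 0$ are non-real quaternions. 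The operator $B$ is only assumed to be a right-linear bounded operator; commuting with the two-sided linear components $T_j$ says nothing about commuting with left scalar multiplication $x\mapsto s^2x$. Consequently $BS_L^{-1}(s,T)\neq S_L^{-1}(s,T)B$ in general, Hille's theorem does not give $Be(T)=e(T)B$, and the rest of your chain of commutations collapses. (You can already see the issue in the simplest case $V=\mathbb{H}$, $T=0$: there $Q_{c,s}^{-1}(T)$ is left multiplication by $s^{-2}$, while $B$ is left multiplication by a fixed quaternion $b$, and these commute only when $b$ and $s^2$ do.)

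The correct mechanism, which is what the paper's own proof (and the proof of Proposition~\ref{prop_Commutation_with_Df} which you invoke) actually uses, is more delicate: one shows that $B$ commutes with the real \emph{components} of the relevant $S$- and $Q$-functional-calculus operators, since each component is a real-coefficient combination of $T_0,T_1,T_2,T_3$ and $|Q_{c,s}(T)|^{-2}$, the latter only involving the real quantities $|s|$ and $s_0$ (see \eqref{Eq_Properties_Q_functional_calculus_decaying_3} and \eqref{Eq_Properties_Q_functional_calculus_decaying_2}). Commutation of $B$ with a whole operator — not just its components — then requires that the operator be genuinely two-sided, i.e.\ that its $e_1,e_2,e_3$-components vanish. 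This holds for $e(T)e(\overline{T})=|e(T)|^2$, and for the bracket $e(T)D(ef)(T)-(ef)(T)De(T)$ it follows from $Df(T)=\overline{Df(T)}$ (Lemma~\ref{lem_Properties_Q_functional_calculus_growing}~iv), using intrinsicality of $f$. Your proof does not isolate that two-sidedness, and that is exactly the step that cannot be bypassed. To repair the argument you would need to abandon the claim that $B$ commutes with each $S$-resolvent and instead work with the representation $Df(T)=\big(e(T)e(\overline{T})\big)^{-1}\big(e(T)D(ef)(T)-(ef)(T)De(T)\big)$, in which both factors are two-sided operators whose components are real combinations of the $T_j$'s.
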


\begin{proof}
First of all, we note that since $BT_j=T_jB$ commutes with every component $T_j$ on $\dom(T)$, the operator $B$ commutes also with every component $e(T)_i$ of $e(T)$, see the representation \eqref{Eq_Properties_S_functional_calculus_decaying_2}. Hence $B$ also commutes with
\begin{equation*}
e(T)e(\overline{T})=e(T)\overline{e(T)}=e_1(T)_0^2+e(T)_1^2+e(T)_2^2+e(T)_3^2.
\end{equation*}
Consequently for the inverse we get the inclusion
\begin{align*}
B\big(e(T)e(\overline{T})\big)^{-1}&=\big(e(T)e(\overline{T})\big)^{-1}e(T)e(\overline{T})B\big(e(T)e(\overline{T})\big)^{-1} \\
&=\big(e(T)e(\overline{T})\big)^{-1}Be(T)e(\overline{T})\big(e(T)e(\overline{T})\big)^{-1}\subseteq\big(e(T)e(\overline{T})\big)^{-1}B.
\end{align*}
Since for the same reason, $B$ also commutes with the components of $e(T)$, $D(ef)(T)$, $(ef)(T)$ and $De(T)$, it also commutes with the components of the bracket term in \eqref{Eq_Properties_Q_functional_calculus_growing_1}. However, since $Df(T)=\overline{Df(T)}$ by Lemma~\ref{lem_Properties_Q_functional_calculus_growing}~iv), we know that $Df(T)$ and hence also $e(T)D(ef)(T)-(ef)(T)De(T)$ only consists of the real term, while all the components which are denoted on the imaginary units $e_1,e_2,e_3$ vanish identically. Hence
\begin{equation*}
B\big(e(T)D(ef)(T)-(ef)(T)De(T)\big)=\big(e(T)D(ef)(T)-(ef)(T)De(T)\big)B,\quad\text{on }V,
\end{equation*}
commutes with the whole bracket term in \eqref{Eq_Properties_Q_functional_calculus_growing_1} and hence also with
\begin{align*}
BDf(T)&=B\big(e(T)e(\overline{T})\big)^{-1}\big(e(T)D(ef)(T)-(ef)(T)De(T)\big) \\
&\subseteq\big(e(T)e(\overline{T})\big)^{-1}B\big(e(T)D(ef)(T)-(ef)(T)De(T)\big) \\
&=\big(e(T)e(\overline{T})\big)^{-1}\big(e(T)D(ef)(T)-(ef)(T)De(T)\big)B=Df(T)B. \qedhere
\end{align*}
\end{proof}

\begin{cor}\label{cor_Commutation_g_with_Df_growing}
Let $T\in\mathcal{KC}(V)$ with $T,\overline{T}$ being injective operators of type $\omega$. Then

\medskip

\begin{enumerate}
\item[i)] $Df(T)g(T)\subseteq g(T)Df(T)$\quad for $f\in\Psi^Q(S_\theta),\,g\in\mathcal{F}_L(S_\theta)$
\item[ii)] $g(T)Df(T)\subseteq Df(T)g(T)$\quad for $f\in\mathcal{F}^Q(S_\theta),\,g\in\Psi_L^Q(S_\theta)$ or $g\in\Psi_R^Q(S_\theta)$
\end{enumerate}
\end{cor}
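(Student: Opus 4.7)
The plan is to reduce each of the two inclusions to a commutation statement between a bounded operator and the components $T_j$, namely Proposition~\ref{prop_Commutation_with_Df_growing} for part ii) and Proposition~\ref{prop_Commutation_with_Df} combined with the regulariser identity for part i).

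Part ii) will be direct. Since $g\in\Psi_L^Q(S_\theta)$ or $g\in\Psi_R^Q(S_\theta)$, the operator $g(T)$ is bounded and everywhere defined by Lemma~\ref{lem_Properties_S_functional_calculus_decaying}~i). The short calculation in the proof of Corollary~\ref{cor_Commutation_g_with_Df} relies only on $T_jS_L^{-1}(s,T)=S_L^{-1}(s,T)T_j$ (or the right-resolvent analogue) on $\dom(T)$, and so it yields $g(T)T_j=T_jg(T)$ on $\dom(T)$. Applying Proposition~\ref{prop_Commutation_with_Df_growing} with $B=g(T)$ then gives $g(T)Df(T)\subseteq Df(T)g(T)$ at once.

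For part i) the roles are reversed: now $Df(T)$ is bounded (Lemma~\ref{lem_Properties_Q_functional_calculus_decaying}~i)) whereas $g(T)$ is in general unbounded. The plan is to run the $H^\infty$-regularisation backwards: choose $e\in\Psi(S_\theta)$ with $e(T)$ injective and $eg\in\Psi_L(S_\theta)$, so that by Definition~\ref{defi_S_functional_calculus_growing} one has $g(T)=e(T)^{-1}(eg)(T)$. Because $\Psi(S_\theta)\subseteq\Psi_L(S_\theta)$, the same proof as in Corollary~\ref{cor_Commutation_g_with_Df} (which never used the stronger decay of $\Psi_L^Q$) delivers $e(T)T_j=T_je(T)$ and $(eg)(T)T_j=T_j(eg)(T)$ on $\dom(T)$. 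Both $e(T)$ and $(eg)(T)$ are bounded by Lemma~\ref{lem_Properties_S_functional_calculus_decaying}~i), hence Proposition~\ref{prop_Commutation_with_Df} applies with $B=e(T)$ and $B=(eg)(T)$, giving
\begin{equation*}
Df(T)e(T)=e(T)Df(T)\qquad\text{and}\qquad Df(T)(eg)(T)=(eg)(T)Df(T).
\end{equation*}
The first of these, combined with the injectivity of $e(T)$, upgrades to the inclusion $Df(T)e(T)^{-1}\subseteq e(T)^{-1}Df(T)$. Chaining the three commutations along the factorisation $g(T)=e(T)^{-1}(eg)(T)$ then shows that every $v\in\dom(g(T))$ satisfies $Df(T)v\in\dom(g(T))$ and $g(T)Df(T)v=Df(T)g(T)v$, which is precisely the desired inclusion.

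The main obstacle I anticipate is the domain bookkeeping in part i): one must verify that $(eg)(T)Df(T)v$ genuinely lies in $\ran(e(T))$ for $v\in\dom(g(T))$, so that $e(T)^{-1}$ can be applied to it and the identity $e(T)^{-1}(eg)(T)Df(T)v=Df(T)g(T)v$ holds. Once the two bounded commutations through Proposition~\ref{prop_Commutation_with_Df} are in place, however, this reduces to writing $(eg)(T)v=e(T)g(T)v$ and using $Df(T)(eg)(T)v=(eg)(T)Df(T)v=e(T)Df(T)g(T)v$ together with injectivity of $e(T)$, which is a mechanical unbounded-operator calculation.
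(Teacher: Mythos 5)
Your proposal is correct and follows essentially the same route as the paper: part ii) is Proposition~\ref{prop_Commutation_with_Df_growing} applied with $B=g(T)$, and part i) is the regularisation chain $Df(T)g(T)=Df(T)e(T)^{-1}(eg)(T)\subseteq e(T)^{-1}Df(T)(eg)(T)=e(T)^{-1}(eg)(T)Df(T)=g(T)Df(T)$, with the commutations through $e(T)$ and $(eg)(T)$ obtained exactly as in Corollary~\ref{cor_Commutation_g_with_Df}. One small point where you are actually more careful than the paper: the paper cites Corollary~\ref{cor_Commutation_g_with_Df} for the commutations with $e\in\Psi(S_\theta)$ and $eg\in\Psi_L(S_\theta)$, although that corollary is stated for the smaller classes $\Psi^Q,\Psi^Q_L$; you correctly note that the argument there uses only $g(T)T_j=T_jg(T)$ on $\dom(T)$, which holds for the larger classes too, and you route this through Proposition~\ref{prop_Commutation_with_Df} directly, closing a small logical gap.
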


\begin{proof}
For the proof of i) let $e$ be a regularizer of $g$ according to Definition~\ref{defi_S_functional_calculus_growing}. Then
\begin{equation}\label{Eq_Commutation_g_with_Df_increasing_1}
Df(T)g(T)=Df(T)e(T)^{-1}(eg)(T)\subseteq e(T)^{-1}(eg)(T)Df(T)=g(T)Df(T),
\end{equation}
where we commuted
\begin{equation}\label{Eq_Commutation_g_with_Df_increasing_2}
Df(T)e(T)^{-1}=e(T)^{-1}e(T)Df(T)e(T)^{-1}=e(T)^{-1}Df(T)e(T)e(T)^{-1}\subseteq e(T)^{-1}Df(T),
\end{equation}
and $Df(T)(eg)(T)=(eg)(T)Df(T)$, which is allowed by Corollary \ref{cor_Commutation_g_with_Df}.

\medskip

The inclusion in ii) follows from Proposition~\ref{prop_Commutation_with_Df_growing}, since $g(T)T_j=T_jg(T)$ obviously commute on $\dom(T)$ by the integral definition of the $S$-functional calculus \eqref{Eq_S_functional_calculus_decaying}.
\end{proof}

\begin{thm}\label{thm_Product_rule_growing}
Let $T\in\mathcal{KC}(V)$ with $T,\overline{T}$ being injective operators of type $\omega$. Then for every $f\in\mathcal{F}(S_\theta)$, $g\in\mathcal{F}_L(S_\theta)$, $\theta\in(\omega,\pi)$, there holds the product rules
\begin{subequations}
\begin{align}
D(fg)(T)&\supseteq Df(T)g(T)+f(\overline{T})Dg(T),\quad\text{and} \label{Eq_Product_rule_growing1} \\
D(fg)(T)&\supseteq Df(T)g(\overline{T})+f(T)Dg(T). \label{Eq_Product_rule_growing2}
\end{align}
\end{subequations}
\end{thm}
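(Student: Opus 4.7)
The plan is to reduce the $H^\infty$-product rule to the decaying-function product rule of Theorem~\ref{thm_Product_rule_decaying} by a joint regularization. I would pick regularizers $e_1,e_2\in\Psi^Q(S_\theta)$ for $f$ and $g$ respectively in the sense of Definition~\ref{defi_Q_functional_calculus_growing}, and then set $e:=e_1e_2$. My first step is to verify that $e$ qualifies as a regularizer for $fg$: it is intrinsic and still lies in $\Psi^Q(S_\theta)$; the operators $e(T)=e_1(T)e_2(T)$ and $e(\overline{T})=e_1(\overline{T})e_2(\overline{T})$ are injective as products of injective operators; and $e\cdot fg=(e_1f)(e_2g)$ belongs to $\Psi_L^Q(S_\theta)$ because $e_1f$ is intrinsic in $\Psi^Q(S_\theta)$ while $e_2g\in\Psi_L^Q(S_\theta)$.

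With these regularizers in hand I would invoke Theorem~\ref{thm_Product_rule_decaying} twice: first on the factorization $e\cdot(fg)=(e_1f)\cdot(e_2g)$, yielding
\begin{equation*}
D(efg)(T)=D(e_1f)(T)(e_2g)(T)+(e_1f)(\overline{T})D(e_2g)(T),
\end{equation*}
and second on the intrinsic factorization $e=e_1\cdot e_2$, yielding
\begin{equation*}
De(T)=De_1(T)e_2(T)+e_1(\overline{T})De_2(T).
\end{equation*}
Substituting both into the defining formula $D(fg)(T)=e(\overline{T})^{-1}\bigl(D(efg)(T)-(fg)(T)De(T)\bigr)$ from Definition~\ref{defi_Q_functional_calculus_growing} and unfolding the $H^\infty$-representations $f(T)=e_1(T)^{-1}(e_1f)(T)$, $g(T)=e_2(T)^{-1}(e_2g)(T)$, $f(\overline{T})=e_1(\overline{T})^{-1}(e_1f)(\overline{T})$, $(fg)(T)=e(T)^{-1}(e_1f)(T)(e_2g)(T)$, together with the identities $D(e_1f)(T)=e_1(\overline{T})Df(T)+f(T)De_1(T)$ and $D(e_2g)(T)=e_2(\overline{T})Dg(T)+g(T)De_2(T)$ obtained by rearranging Definition~\ref{defi_Q_functional_calculus_growing}, rearranges the expression so that exactly $Df(T)g(T)+f(\overline{T})Dg(T)$ is isolated on the right.

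The algebraic simplification requires that all operators of the form $h(T), h(\overline{T}), Dh(T)$ built from the intrinsic or slice hyperholomorphic functions $e_1, e_2, e_1f, e_2g$ have pairwise commuting components; this is ensured by Lemma~\ref{lem_Properties_Q_functional_calculus_decaying} and Lemma~\ref{lem_Properties_S_functional_calculus_decaying}, while the intertwining of the bounded regularizer operators with the unbounded $D$-factors is supplied by Proposition~\ref{prop_Commutation_with_Df_growing} and Corollary~\ref{cor_Commutation_g_with_Df_growing}. I expect the main obstacle to be domain bookkeeping rather than the algebra: since both $Df(T)g(T)$ and $f(\overline{T})Dg(T)$ are compositions of potentially unbounded operators, equality cannot be expected, and for every $v\in\dom(Df(T)g(T))\cap\dom(f(\overline{T})Dg(T))$ one has to track carefully that every intermediate vector produced during the rearrangement lies in the domain of the next operator applied to it -- this is precisely what produces the inclusion $\supseteq$ rather than $=$. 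Finally, the second identity \eqref{Eq_Product_rule_growing2} follows from \eqref{Eq_Product_rule_growing1} by replacing $T$ with $\overline{T}$ and invoking Lemma~\ref{lem_Properties_Q_functional_calculus_growing}~iii), exactly as the two forms of the decaying product rule were related in the proof of Theorem~\ref{thm_Product_rule_decaying}.
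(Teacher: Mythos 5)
Your plan coincides with the paper's proof in all structural respects: choose regularizers $e_1,e_2$ for $f,g$, observe that $e=e_1e_2$ regularizes $fg$, apply the decaying product rule of Theorem~\ref{thm_Product_rule_decaying} to $(e_1f)(e_2g)$ and to $e_1e_2$, reorganize using the commutation results, and obtain \eqref{Eq_Product_rule_growing2} from \eqref{Eq_Product_rule_growing1} via $T\mapsto\overline{T}$ and Lemma~\ref{lem_Properties_Q_functional_calculus_growing}~iii). One point worth tightening: you attribute the strict inclusion to generic ``domain bookkeeping'', but the $\supseteq$ actually enters through a handful of specific operator inclusions, most notably $(fg)(T)\supseteq f(T)g(T)$ from the $H^\infty$-product rule and the commutation inclusions such as $Df(T)g(T)\subseteq g(T)Df(T)$ and $Df(T)e(T)^{-1}\subseteq e(T)^{-1}Df(T)$; pinning them down is what makes each step verifiable. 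Related to this, your proposed substitutions $D(e_1f)(T)=e_1(\overline{T})Df(T)+f(T)De_1(T)$ and $D(e_2g)(T)=e_2(\overline{T})Dg(T)+g(T)De_2(T)$ are really inclusions $\subseteq$ (the right-hand sides are restrictions to $\dom Df(T)$, $\dom Dg(T)$), and while that is harmless for proving $\supseteq$, it forces you to track how each restriction propagates through the $(e_1e_2)(\overline T)^{-1}$ prefactor; the paper avoids this by keeping $D(e_1f)(T)$, $D(e_2g)(T)$ intact as bounded operators throughout the rearrangement and only collecting them into $Df(T)$, $Dg(T)$ at the very end, which is cleaner and the route I would recommend when you write this up.
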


\begin{proof}
First of all, the second rule \eqref{Eq_Product_rule_growing2} follows immediately when we replace $T$ by $\overline{T}$ in \eqref{Eq_Product_rule_growing1} and use Lemma~\ref{lem_Properties_Q_functional_calculus_growing}~iii). For the prove of \eqref{Eq_Product_rule_growing1} let $e_1$ be a regularizer of $f$ and $e_2$ a regularizer of $g$ according to Definition~\ref{defi_Q_functional_calculus_growing}. Using the product rule $(fg)(T)\supseteq f(T)g(T)$ of the $S$-functional calculus \cite[Theorem 5.6]{ACQS2016} and the commutation of $g(T)$ and $D(e_1e_2)(T)$ as in \eqref{Eq_Commutation_g_with_Df_increasing_1}, gives
\begin{equation}\label{Eq_Product_rule_growing_1}
(fg)(T)D(e_1e_2)(T)\supseteq f(T)g(T)D(e_1e_2)(T)\supseteq f(T)D(e_1e_2)(T)g(T).
\end{equation}
Since $e_1e_2$ is a regularizer of the product $fg$, we can use the identity \eqref{Eq_Product_rule_growing_1} and the product rule \eqref{Eq_Product_rule_left_decaying1}, to get
\begin{align*}
D(fg)(T)&=(e_1e_2)(\overline{T})^{-1}\big(D(e_1e_2fg)(T)-(fg)(T)D(e_1e_2)(T)\big) \\
&\supseteq(e_1e_2)(\overline{T})^{-1}\big(D(e_1fe_2g)(T)-f(T)D(e_1e_2)(T)g(T)\big) \\
&\supseteq\big(e_1(\overline{T})e_2(\overline{T})\big)^{-1}\big(D(e_1f)(T)(e_2g)(T)+(e_1f)(\overline{T})D(e_2g)(T) \notag \\
&\hspace{3.2cm}-f(T)De_2(T)e_1(T)g(T)-f(T)e_2(\overline{T})De_1(T)g(T).
\end{align*}
Using now the commutation rules

\begin{enumerate}
\item[$\circ$] $D(e_1f)(T)(e_2g)(T)\supseteq D(e_1f)(T)e_2(T)e_2(T)^{-1}(e_2g)(T)=e_2(T)D(e_1f)(T)g(T)$,
\item[$\circ$] $De_2(T)e_1(T)=e_1(T)De_2(T)$,\hspace{1cm} from Corollary~\ref{cor_Commutation_g_with_Df},
\item[$\circ$] $f(T)e_1(T)=e_1(T)^{-1}(e_1f)(T)e_1(T)=(e_1f)(T)$,
\item[$\circ$] $f(T)e_2(\overline{T})\supseteq e_2(\overline{T})f(T)$,\hspace{1.95cm} as in \eqref{Eq_Commutation_g_with_Df_increasing_1},
\end{enumerate}

we can rearrange the terms and obtain the product rule
\begin{align*}
D(fg)(T)&\supseteq e_1(\overline{T})^{-1}e_2(\overline{T})^{-1}\big(e_2(T)D(e_1f)(T)g(T)+(e_1f)(\overline{T})D(e_2g)(T) \notag \\
&\hspace{3.2cm}-(e_1f)(T)De_2(T)g(T)-e_2(\overline{T})f(T)De_1(T)g(T)\big) \\
&=e_1(\overline{T})^{-1}e_2(\overline{T})^{-1}\big(e_2(\overline{T})D(e_1f)(T)g(T)+(e_1f)(\overline{T})D(e_2g)(T) \notag \\
&\hspace{3.2cm}-(e_1f)(\overline{T})De_2(T)g(T)-e_2(\overline{T})f(T)De_1(T)g(T)\big) \\
&\supseteq e_1(\overline{T})^{-1}\big(D(e_1f)(T)-f(T)De_1(T)\big)g(T) \\
&\quad+e_1(\overline{T})^{-1}(e_1f)(\overline{T})e_2(\overline{T})^{-1}\big(D(e_2g)(T)-De_2(T)g(T)\big) \\
&=Df(T)g(T)+f(\overline{T})Dg(T),
\end{align*}
where in the second line we used \eqref{Eq_Q_functional_calculus_decaying_14} and in the third one we commuted $e_2(\overline{T})^{-1}(e_1f)(\overline{T})\supseteq(e_1f)(\overline{T})e_2(\overline{T})^{-1}$ by a similar argument as in \eqref{Eq_Commutation_g_with_Df_increasing_2}.
\end{proof}

As the final result of this paper, we investigate the action of the harmonic functional calculus on intrinsic rational functions. The following Proposition~\ref{prop_Rational_equivalence_growing} contains basically the same statement as Proposition~\ref{prop_Rational_equivalence_decaying}, but without the assumptions i), ii) on the decay on the polynomials. As a consequence we are obliged to use the harmonic $H^\infty$-functional calculus \eqref{Eq_Q_functional_calculus_growing} instead of the calculus \eqref{Eq_Q_functional_calculus_decaying} for decaying functions.

\begin{prop}\label{prop_Rational_equivalence_growing}
Let $T\in\mathcal{KC}(V)$ with $T,\overline{T}$ being injective operators of type $\omega$. Then for every intrinsic polynomials $p,q$, with $q$ not having any zeros in the closed sector $\overline{S_\omega}$, we know that $q[T]$, $q[\overline{T}]$ are bijective and
\begin{align*}
D\Big(\frac{p}{q}\Big)(T)&=\big(Dp[T]q[T]-p[T]Dq[T]\big)q[T]^{-1}q[\overline{T}]^{-1} \\
&=\big(Dp[T]q[T]-p[T]Dq[T]\big)q[T]^{-1}q[\overline{T}]^{-1}.
\end{align*}
Here, the left hand side of this equation is understood as the harmonic $H^\infty$-functional calculus \eqref{Eq_Q_functional_calculus_growing} and the operators on the right via the polynomial functional calculi \eqref{Eq_Polynomial_functional_calculus} and \eqref{Eq_Derivative_polynomial_operator}.
\end{prop}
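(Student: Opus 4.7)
The strategy is to reduce the growing case to the already-proved decaying case (Proposition~\ref{prop_Rational_equivalence_decaying}) by regularizing $p/q$, and then to collapse the resulting expression using two polynomial identities for the Cauchy--Fueter operator. Bijectivity of $q[T]$ and $q[\overline{T}]$ is immediate from Lemma~\ref{lem_Polynomial_bijective}, since the zeros of $q$ avoid $\overline{S_\omega}\supseteq\sigma_S(T)=\sigma_S(\overline{T})$.

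As regularizer I would take $e(s):=s^n/(1+s)^{2n-1}$ for $n$ chosen sufficiently large that $e\in\Psi^Q(S_\theta)$ and $e\cdot p/q\in\Psi_L^Q(S_\theta)$; the injectivity of $e(T)$ and $e(\overline{T})$ then follows from the injectivity of $T,\overline{T}$ together with $-1\in\rho_S(T),\rho_S(\overline{T})$. Both $e=s^n/(1+s)^{2n-1}$ and $e\cdot p/q=(s^n p)/((1+s)^{2n-1}q)$ are intrinsic rationals whose numerator--denominator pairs satisfy the three hypotheses of Proposition~\ref{prop_Rational_equivalence_decaying} for large $n$, and that proposition then furnishes closed-form expressions for $De(T)$ and $D(ep/q)(T)$ entirely in terms of the polynomial and harmonic polynomial calculi~\eqref{Eq_Polynomial_functional_calculus} and~\eqref{Eq_Derivative_polynomial_operator}. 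Substituting these into the defining formula~\eqref{Eq_Q_functional_calculus_growing1},
\begin{equation*}
D(p/q)(T)=e(\overline{T})^{-1}\big(D(ep/q)(T)-(p/q)(T)De(T)\big),
\end{equation*}
together with the identity $(p/q)(T)=p[T]q[T]^{-1}$ for the $S$-functional $H^\infty$-calculus on intrinsic rationals (provable by the same regularization trick), reduces the claim to an algebraic identity in the commutative algebra generated by $T,\overline{T}$.

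Two polynomial identities then do the work. The first is a Leibniz rule $D(fg)[T]=Df[T]\,g[\overline{T}]+f[T]Dg[T]$ for intrinsic polynomials $f,g$, which follows directly from $Ds^i=-2\sum_{k=0}^{i-1}s^k\overline{s}^{i-1-k}$; it is needed to expand $D(s^n p)[T]$ and $D((1+s)^{2n-1}q)[T]$. The second is the cancellation identity
\begin{equation*}
Da(s)\big(\overline{p}(s)q(s)-p(s)\overline{q}(s)\big)=(\overline{a}(s)-a(s))\big(Dp(s)q(s)-p(s)Dq(s)\big),
\end{equation*}
which becomes transparent via the representation $Dh(s)=-2(h(s)-h(\overline{s}))/(s-\overline{s})$ valid for any intrinsic polynomial $h$: both sides unfold to $2(\overline{a}-a)(\overline{p}q-p\overline{q})/(s-\overline{s})$. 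Transplanting this identity to the commuting operators $T,\overline{T}$ produces exactly the cancellations needed to collapse the substituted expression down to $(Dp[T]q[T]-p[T]Dq[T])q[T]^{-1}q[\overline{T}]^{-1}$. The second formula in the statement then follows from the first by the substitution $T\leftrightarrow\overline{T}$ combined with Lemma~\ref{lem_Properties_Q_functional_calculus_growing}~iii).

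The main obstacle I anticipate is twofold: the careful bookkeeping of the algebraic cancellation amid the many polynomial terms produced by the Leibniz expansion, and ensuring that the resulting equality holds with matching domains rather than as the one-sided inclusion one would get from a more direct application of the product rule~\eqref{Eq_Product_rule_growing1}. The latter point should be resolvable because every operator appearing in the final expression is a polynomial in the commuting pair $T,\overline{T}$ or the inverse of a bijective such polynomial, so all relevant domains are controlled uniformly by the $T^k$-scale.
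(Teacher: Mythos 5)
Your strategy is the same as the paper's: regularize by an intrinsic rational $e=s^n/(1+s)^{2n-1}$, evaluate $De(T)$ and $D(e\,p/q)(T)$ via Proposition~\ref{prop_Rational_equivalence_decaying}, substitute into the defining formula \eqref{Eq_Q_functional_calculus_growing1} using $(p/q)(T)=p[T]q[T]^{-1}$, and collapse via polynomial identities for the harmonic polynomial calculus \eqref{Eq_Derivative_polynomial_operator}. The verification of the hypotheses of Proposition~\ref{prop_Rational_equivalence_decaying} for $e$ and $e\,p/q$ with $n$ large, the bijectivity of $q[T],q[\overline{T}]$ from Lemma~\ref{lem_Polynomial_bijective}, and the derivation of the second display line by $T\leftrightarrow\overline{T}$ and Lemma~\ref{lem_Properties_Q_functional_calculus_growing}~iii) all match the paper exactly.

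The only place you diverge is in how the final algebraic collapse is organized. The paper first proves the Leibniz rule $D(pq)[T]=Dp[T]q[T]+p[\overline{T}]Dq[T]$ (which, by the symmetry you note, has the equivalent form $Dp[T]q[\overline{T}]+p[T]Dq[T]$), then uses the two-variable cancellation $Dr[T]q[\overline{T}]+r[T]Dq[T]=Dr[T]q[T]+r[\overline{T}]Dq[T]$ --- which is just the equality of the two Leibniz forms for $D(rq)[T]$. Your three-variable identity $Da\,(\overline{p}q-p\overline{q})=(\overline{a}-a)(Dp\,q-p\,Dq)$ specializes to the paper's at $p=1$, so it is correct but more general than needed. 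Your observation that $Dh(s)=-2(h(s)-h(\overline{s}))/(s-\overline{s})$ for intrinsic polynomials makes both identities transparent; the paper does not write it out, but it is a clean way to see why these polynomial cancellations hold. What you have not actually done is carry out the bookkeeping of the collapse --- factoring out $p[T]$, matching the remaining $Dr[T]$ and $Dq[T]$ terms, and verifying $r[\overline{T}]^{-1}$ clears --- but the ingredients you assembled are the right ones and, once written out, would reproduce the paper's proof.
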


\begin{proof}
In the \textit{first step}, let $p$ and $q$ be any intrinsic polynomials. Then, in the sense of the polynomial functional calculi \eqref{Eq_Polynomial_functional_calculus} and \eqref{Eq_Derivative_polynomial_operator} we obtain the product rule
\begin{align}
Dp[T]q[T]+p[\overline{T}]Dq[T]&=-2\sum\limits_{i=0}^np_i\sum\limits_{k=0}^{i-1}T^k\overline{T}^{i-1-k}\sum\limits_{j=0}^mq_jT^j-2\sum\limits_{i=0}^np_i\overline{T}^i\sum\limits_{j=0}^mq_j\sum\limits_{k=0}^{j-1}T^k\overline{T}^{j-1-k} \notag \\
&=-2\sum\limits_{i=0}^n\sum\limits_{j=0}^mp_iq_j\bigg(\sum\limits_{k=0}^{i-1}T^{j+k}\overline{T}^{i-1-k}+\sum\limits_{k=0}^{j-1}T^k\overline{T}^{i+k-1-k}\bigg) \notag \\
&=-2\sum\limits_{i=0}^n\sum\limits_{j=0}^mp_iq_j\sum\limits_{k=0}^{i+j-1}T^k\overline{T}^{i+j-1-k}=D(pq)[T]. \label{Eq_Rational_Q_equivalence_growing_2}
\end{align}
In the \textit{second step} we choose an intrinsic rational regularizer $e(s)=\frac{r(s)}{t(s)}$, for example the one in \eqref{Eq_Q_functional_calculus_unbounded_3} with large enough $n\in\mathbb{N}$. Then it follows from Proposition~\ref{prop_Rational_equivalence_decaying} and \eqref{Eq_Rational_Q_equivalence_growing_2}, that
\begin{align}
D\Big(\frac{rp}{tq}\Big)(T)&=\Big(D(rp)[T](tq)[T]-(rp)[T]D(tq)[T]\Big)(tq)[T]^{-1}(tq)[\overline{T}]^{-1} \notag \\
&=\Big(Dr[T]p[T]t[T]q[T]+r[\overline{T}]Dp[T]t[T]q[T] \notag \\
&\hspace{1cm}-r[T]p[T]Dt[T]q[\overline{T}]-r[T]p[T]t[T]Dq[T]\Big)(tq)[T]^{-1}(tq)[\overline{T}]^{-1}, \label{Eq_Rational_Q_equivalence_growing_3}
\end{align}
as well as
\begin{align}
\Big(\frac{p}{q}\Big)(T)D\Big(\frac{r}{t}\Big)(T)&=p[T]q[T]^{-1}\big(Dr[T]t[T]-r[T]Dt[T]\big)t[T]^{-1}t[\overline{T}]^{-1} \notag \\
&=\big(Dr[T]p[T]t[T]q[\overline{T}]-r[T]p[T]Dt[T]q[\overline{T}]\big)(tq)[T]^{-1}(tq)[\overline{T}]^{-1}, \label{Eq_Rational_Q_equivalence_growing_4}
\end{align}
where we used that $(\frac{p}{q})(T)=p[T]q[T]^{-1}$ by \cite[Lemma 7.2.9]{FJBOOK}. Plugging the two representations \eqref{Eq_Rational_Q_equivalence_growing_3} and \eqref{Eq_Rational_Q_equivalence_growing_4} into the definition \eqref{Eq_Q_functional_calculus_growing}, we get the stated representation
\begin{align*}
D\Big(\frac{p}{q}\Big)(T)&=\Big(\frac{r}{t}\Big)(\overline{T})^{-1}\Big(D\Big(\frac{rp}{tq}\Big)(T)-\Big(\frac{p}{q}\Big)(T)D\Big(\frac{r}{t}\Big)(T)\Big) \\
&=\big(r[\overline{T}]t[\overline{T}]^{-1}\big)^{-1}\Big(Dr[T]p[T]t[T]q[T]+r[\overline{T}]Dp[T]t[T]q[T] \\
&\hspace{3.5cm}-r[T]p[T]t[T]Dq[T]-Dr[T]p[T]t[T]q[\overline{T}]\Big)(tq)[T]^{-1}(tq)[\overline{T}]^{-1} \\
&=r[\overline{T}]^{-1}\Big(Dr[T]p[T]q[T]+r[\overline{T}]Dp[T]q[T] \\
&\hspace{3.5cm}-r[T]p[T]Dq[T]-Dr[T]p[T]q[\overline{T}]\Big)q[T]^{-1}q[\overline{T}]^{-1} \\
&=\big(Dp[T]q[T]-Dq[T]p[T]\big)q[T]^{-1}q[\overline{T}]^{-1},
\end{align*}
where in the last equation we use the identity
\begin{equation*}
Dr[T]q[\overline{T}]+r(T)Dq[T]=Dr[T]q[T]+r[\overline{T}]Dq[T],
\end{equation*}
which can be checked by plugging in the definitions \eqref{Eq_Polynomial_functional_calculus} and \eqref{Eq_Derivative_polynomial_operator}.
\end{proof}

\end{document}